\numberwithin{equation}{section}
\title{Sharp subelliptic estimates in the $\dbar$-Neumann problem via an uncertainty principle}
\author{Gian Maria Dall'Ara}
\address{Istituto Nazionale di Alta Matematica ``F. Severi"\\ Research Unit Scuola Normale Superiore\\
Piazza dei Cavalieri, 7, 56126, Pisa (Italy)}
\email{dallara@altamatematica.it}
\author{Samuele Mongodi}
\address{Università degli Studi di Milano-Bicocca\\
	Dipartimento di Matematica e Applicazioni\\
	via Roberto Cozzi, 55, 20125 Milano (Italy)}
\email{samuele.mongodi@unimib.it}
\thanks{2020 Mathematics Subject Classification: 32W05 (primary), 35H20, 32T27}
\date{\today}
\newcommand{\C}{\mathbb{C}}
\newcommand{\R}{\mathbb{R}}
\newcommand{\N}{\mathbb{N}}
\newcommand{\dbar}{\overline{\partial}}
\newcommand{\E}{\mathbf{E}}
\newcommand{\Dil}{\mathrm{Dil}}
\newcommand{\re}{\mathrm{Re}}
\newcommand{\im}{\mathrm{Im}}
\newcommand{\sfera}{S}
\newtheorem{thm}{Theorem}[section]
\newtheorem{prp}[thm]{Proposition}
\newtheorem{lem}[thm]{Lemma}
\newtheorem{cor}[thm]{Corollary}
\newtheorem{dfn}[thm]{Definition}
\newtheorem{rmk}[thm]{Remark}
\begin{document}

\maketitle

\begin{abstract}
	The problem of giving a (CR-)geometric description of the best possible order of a subelliptic estimate at a boundary point in the $\dbar$-Neumann problem is largely open. In this paper, we introduce a novel technique based on a "$\dbar$-uncertainty principle" and, as an application, we determine the sharp order of subellipticity at the origin for a large class of Kohn's special domains in ambient dimension $\leq 5$. 
	\end{abstract}

\tableofcontents

\section{Introduction}

\subsection{Subellipticity in the $\bar\partial$-Neumann problem}

The $\dbar$-Neumann problem, formulated by Spencer in the 1950s, is the staple of the partial differential equations approach to complex analysis in several variables. We briefly recall its formulation mostly in order to establish notation, referring to \cite{folland_kohn} or \cite{straube_book} for details. 

Given a domain $\Omega\subseteq \C^n$ ($n\geq 2$) and an integer $q\geq 0$, denote by $L^2_{(0,q)}(\Omega)$ the Hilbert space of $(0,q)$-forms, with scalar product defined with respect to the ambient Euclidean metric. The natural differential operator $\dbar$ has a maximal extension $\dbar:L^2_{(0,q)}(\Omega)\rightarrow L^2_{(0,q+1)}(\Omega)$ which, being closed and densely defined, admits a Hilbert space adjoint $\dbar^*:L^2_{(0,q+1)}(\Omega)\rightarrow L^2_{(0,q)}(\Omega)$. We follow the custom of denoting by the same symbol $\dbar$ (resp.~$\dbar^*$) operators acting on forms of varying degrees (equivalently, we think of $\dbar$ and $\dbar^*$ as operators acting on $\oplus_q L^2_{(0,q)}(\Omega)$). 

The Hodge Laplacian \[
\Box:=\dbar^*\dbar+\dbar\dbar^*
\]
is a nonnegative self-adjoint operator preserving form degrees, where the natural domain of definition of $\Box$ is \[\mathrm{dom}(\Box)=\{u\in \mathrm{dom}(\dbar)\cap\mathrm{dom}(\dbar^*)\colon\, \dbar u\in \mathrm{dom}(\dbar^*),\ \dbar^* u\in \mathrm{dom}(\dbar)\}.\] The $\dbar$-Neumann problem (at the level of $(0,q)$-forms) is the noncoercive boundary value problem\begin{equation}\label{eq:dbar_Neumann}
\begin{cases}
	\Box u = v\\
	u\in \mathrm{dom}(\Box)
\end{cases}
\end{equation}
where $v\in L^2_{(0,q)}(\Omega)$ is the datum, and the noncoercive boundary conditions are hidden in the requirement that $u$ lie in the domain of $\Box$, specifically in the conditions $u\in \mathrm{dom}(\dbar^*)$ and $\dbar u\in \mathrm{dom}(\dbar^*)$. 

If $\Omega\subset\C^n$ is \emph{bounded and pseudoconvex} (see \cite[Theorem 2.9]{straube_book} or \cite[Theorem 4.4.1]{chen_shaw}), then $\Box$ has closed range and trivial null space at the level of $(0,q)$-forms for every $q\geq 1$. Hence, in positive degrees the $\dbar$-Neumann problem has a unique solution $u\in L^2_{(0,q)}(\Omega)$ for every datum $v\in L^2_{(0,q)}(\Omega)$. By Hodge theory, this solvability amounts to the vanishing of the $L^2$-Dolbeault cohomology groups in positive degrees. 

Existence of weak solutions of \eqref{eq:dbar_Neumann} is thus elegantly settled, and the next natural question is that of regularity. Since interior regularity follows from classical elliptic theory, in broad terms the problem is to determine \emph{under which assumptions on the boundary $b \Omega$ boundary regularity in the $\dbar$-Neumann problem holds}. This fundamental question gave rise to a variety of deep contributions through the last six decades. We refer the reader to the various existing surveys and accounts, e.g., \cite{boas_straube, chen_shaw, fu_straube, straube_icm, straube_book, catlin_dangelo}. \newline 

\par In this paper, we study the \emph{local regularity problem}, where regularity is measured in the \emph{$L^2$-Sobolev scale}, on pseudoconvex domains $\Omega$ with smooth boundary. From now on, we \emph{restrict our considerations to the level of $(0,1)$-forms} (our methods should be extendable to forms of higher degree, but we wish to keep at a minimum the complexity of the setting). 

A crucial notion in the local regularity theory of the $\dbar$-Neumann problem is that of a \emph{subelliptic estimate}, which we proceed to recall. Let $p\in b\Omega$ be a boundary point and $s>0$. A \emph{subelliptic estimate of order $s$} is said to hold at $p$ if there exist a constant $C>0$ and a neighborhood $V$ of $p$ in $\C^n$ such that the inequality 
\begin{equation}\label{eq:subelliptic}
	\lVert u\rVert_s^2\leq C(	\lVert\dbar u\rVert^2 + \lVert\dbar^*u\rVert^2+\lVert u\rVert^2)
\end{equation}
holds for all $(0,1)$-forms $u$ in the domain of $\dbar^*$ having coefficients in $C^\infty_c(\overline{\Omega})$ and supported in $\overline\Omega\cap V$. Here $\lVert\cdot\rVert_s$ is the $L^2$-Sobolev norm of order $s$, and $\lVert\cdot\rVert=\lVert\cdot\rVert_0$ is the $L^2$ norm. 

The interest in \eqref{eq:subelliptic} stems from the fact that it implies a local gain of boundary regularity in the $L^2$-Sobolev scale for the $\dbar$-Neumann problem. More precisely, assume that $\Omega$ is bounded, pseudoconvex and has a smooth boundary, and denote by $W^t_{(0,1)}(U)\subseteq L^2_{(0,1)}(U)$ the $L^2$-Sobolev space of order $t\geq 0$ on the open set $U$. If a subelliptic estimate of order $s$ holds at $p$, then we have the following: for every datum $v\in L^2_{(0,1)}(\Omega)$ such that $v_{|\Omega\cap U} \in W^t_{(0,1)}(\Omega\cap U)$, where $U$ is a neighborhood of $p$ in $\C^n$, we have $u_{|\Omega\cap U'}\in W^{t+2s}_{(0,1)}(\Omega\cap U')$, where $u$ is the solution of \eqref{eq:dbar_Neumann} and $U'$ is a possibly smaller neighborhood of $p$. We refer the reader to Theorem 1.13 of \cite{kohn_1979} for a more detailed statement, including implications for the canonical solution of the $\dbar$-problem and for the Bergman projection. 

A few basic and well-known remarks about subelliptic estimates are in order: \begin{enumerate}
	\item The validity of a subelliptic estimate is a local property of the boundary $b \Omega$. This is trivial, because the condition $u\in \mathrm{dom}(\dbar^*)$ is local (as recalled at the beginning of Section \ref{sec:reduction_boundary}). Thus, whether a subelliptic estimate of a given order $s$ is valid at $p$ only depends on the germ of $b \Omega$ at $p$. 
	\item The validity of \eqref{eq:subelliptic} is independent of the Hermitian metric chosen to define the $L^2$ spaces of forms and the adjoint $\dbar^*$ (see \cite[Theorem 1]{sweeney} and the remarks following its statement, or \cite[Theorem 2]{celik_straube}). In particular, any (locally defined) Hermitian metric may be used in place of the standard Euclidean metric of $\C^n$. We will take advantage of this freedom in Section \ref{sec:pseudoconvex_rigid_domains} below. 
	\item As a consequence of remarks (1) and (2), the validity of a subelliptic estimate of order $s$ at $p$ is invariant under local biholomorphisms. In slightly imprecise terms, it solely depends on the CR geometry of the germ $(b \Omega, p)$. 
	\end{enumerate}

\subsection{The sharp order of subellipticity. Previous work}

Our focus in the present paper is on the \emph{best possible order of a subelliptic estimate}. It is convenient to give the following definition. 

\begin{dfn}[Sharp order of subellipticity]
	Given a germ of pseudoconvex domain $(\Omega,p)$ with smooth boundary (where $p\in b\Omega$), we define the sharp order of subellipticity as \[
	s(\Omega, p):=\sup\{s\geq0\colon\ \text{\eqref{eq:subelliptic} holds for some choice of $C$ and $V$}\}.
	\]
\end{dfn}

Notice that the set of exponents $s\geq 0$ for which a subelliptic estimate of order $s$ holds at $p$ is either $[0,s(\Omega, p)]$ or $[0,s(\Omega, p))$, and there are examples of the latter possibility in dimension three or higher (see \cite[Theorem 7.2]{catlin_dangelo}). \newline

\par
If $\Omega$ is strongly pseudoconvex at $p$, then $s(\Omega, p)= \frac{1}{2}$ (and the supremum is achieved) \cite{kohn_harmonic_I, kohn_harmonic_II}. \newline The invariant $s(\Omega, p)$ is completely understood in dimension $2$. If $\Omega\subseteq \C^2$, then \[
	s(\Omega, p)=\frac{1}{2m}
	\]
(and the supremum is achieved), where the quantity $2m$ is an even integer, or $+\infty$ (in which case no subelliptic estimate holds), admitting various equivalent descriptions. E.g., it equals the maximum order of contact of smooth complex analytic curves with $b \Omega$ at $p$, and it also equals the minimum number of iterated commutators of horizontal vector fields required to span the tangent space of $b \Omega$ at $p$, where horizontal refers to the natural CR structure on the boundary. This result is due to Kohn \cite{kohn_dim2} and Greiner \cite{greiner}. See also \cite[Theorem 3.1]{catlin_dangelo}. 

In dimension three or higher, the present understanding of the invariant $s(\Omega, p)$ is much more limited. The basic qualitative problem of deciding whether $s(\Omega,p)$ is positive, that is, whether a subelliptic estimate holds at $p$, has been settled by Catlin \cite{catlin_necessary, catlin_boundary, catlin_subelliptic}: we have \begin{equation}\label{eq:catlin_characterization} 
	s(\Omega, p)>0 \quad \text{if and only if} \quad \Delta^1(b \Omega, p)<+\infty, \end{equation}
where $\Delta^1(b \Omega, p)$ is the D'Angelo $1$-type of $(b\Omega, p)$, defined as the maximum order of contact of one-dimensional, \emph{possibly singular}, complex analytic varieties with $b\Omega$ at $p$ (see \cite{dangelo_type, dangelo_book} for this notion of type). More precisely, Catlin states in \cite[p. 133]{catlin_subelliptic} that $
s(\Omega, p)\geq \frac{1}{\Delta^1(b\Omega, p)^{n^2\Delta^1(b\Omega, p)^{n^2}}}$, 
while \cite[Theorem 3]{catlin_necessary} contains the neater upper bound $s(\Omega, p)\leq \frac{1}{\Delta^1(b\Omega, p)}$ (actually, a stronger bound is proven, see Theorem \ref{thm:catlin_necessity} below).

A few years before Catlin's work, Kohn developed a very different approach to the subellipticity problem. In  \cite{kohn_1979}, he introduced subelliptic multiplier ideals, namely ideals $I_s(p)$ consisting of germs $f$ of smooth functions at $p$ having the property that $	\lVert f u\rVert_s^2\leq C(	\lVert\dbar u\rVert^2 + \lVert\dbar^*u\rVert^2+\lVert u\rVert^2)$ for the same space of forms $u$ as in \eqref{eq:subelliptic}. He devised a number of procedures of an algebraic nature allowing to pass from an element $f\in I_s(p)$ to a related element $g\in I_{s'}(p)$, where $s'$ is smaller (in a controlled way) than $s$. If by a suitable sequence of such procedures one is able to obtain that $1\in I_s(p)$, then the inequality $s(\Omega,p)\geq s$ is established. This approach to the subellipticity problem turns out to be successful when the boundary of $\Omega$ is real-analytic near $p$ (thanks to a result of Diederich--Fornæss \cite{diederich_fornaess}): in this case, $s(\Omega, p)>0$ if and only if the boundary does not contain any germ of positive dimensional complex analytic variety near $p$, a condition that, in the real-analytic case, is equivalent to the finiteness of $\Delta^1(b\Omega, p)$. Unfortunately, the problem of generalizing Kohn's multiplier ideal approach beyond the real-analytic category appears to be very difficult, and it remains an open question whether a suitable variant of Kohn's method may be used to prove \eqref{eq:catlin_characterization}. \newline

\par The quantitative lower bounds on $s(\Omega, p)$ (in terms of geometric invariants of the germ of $b\Omega$ at $p$) that can be extracted from the results of Kohn and Catlin are quite poor. Indeed, there is a substantial gap between Catlin's upper and lower bounds, while Kohn's original approach fails completely to produce any lower bound on $s(\Omega, p)$ depending only on the dimension and the D'Angelo $1$-type (see, e.g., \cite[Proposition 4.4]{catlin_dangelo}). The problem of obtaining such estimates via a refinement of Kohn's multiplier ideal method has been revived recently by works of Siu \cite{siu_effective, siu_new} and Kim--Zaitsev \cite{kim_zaitsev_Q, kim_zaitsev_jet, kim_zaitsev_triangular}, who in particular solved it for the important class of so-called \emph{special domains} \begin{equation}\label{eq:special_domain}
\Omega=\{(z,z_{n+1})\in \C^{n+1}\colon\ \im(z_{n+1})>|F(z)|^2\}\subset \C^{n+1}, 
\end{equation}
where $F:\C^n\rightarrow \C^m$ is holomorphic. The lower bound obtained by Kim--Zaitsev is of the form $s(\Omega, 0)\geq e^{-C_n \Delta^1(b\Omega, 0)^{C_n}}$ (see \cite[Theorem 1.6]{kim_zaitsev_triangular} and the remark that follows).

The exponential lower bounds on the sharp order of subellipticity in terms of the D'Angelo $1$-type obtained by Catlin (and by Kim--Zaitsev in the case of special domains) are of course very far from being sharp in dimension $2$ (cf.~the results by Kohn and Greiner cited above), and this is expected to be the case in higher dimensions too. In fact, D'Angelo conjectured \cite[p.205]{dangelo_book} that $s(\Omega, p)\geq \frac{1}{B^1(b\Omega, p)}$, where $B^1(b\Omega, p)$ is a further notion of $1$-type taking integer values in the interval $\left[\Delta^1(b\Omega, p), 2^{2-n}\Delta^1(b\Omega, p)^{n-1}\right]$. 

Finally, let us point out the following recent papers on subellipticity in the $\dbar$-Neumann problem: \cite{zimmer}, where metric geometry methods are combined with Catlin's potential theory on convex domains; a recent preprint by Siu \cite{siu_allvariables}, where he treats a more general class of domains of the form \eqref{eq:special_domain} where $F$ is allowed to depend on the last variable $z_{n+1}$ too; the paper \cite{kohn_mexico}, where "modified" Kohn's algorithms are discussed.

\subsection{Content of the paper}

In view of the above discussion, it would be desirable to have a technique to establish subelliptic estimates, alternative to both Catlin's potential theoretic approach and Kohn's multiplier ideal machinery, capable of capturing more precise information about the invariant $s(\Omega, p)$. 

Motivated by these considerations, we introduce below a new method, based on a \emph{$\dbar$-uncertainty principle}, in the study of subellipticity of the $\dbar$-Neumann problem. As an application of our method, Theorem \ref{thm:main} below \emph{determines the sharp order of subellipticity at the origin for a large class of "dilation-invariant special domains" of dimension $\leq 5$}, defined in Section \ref{sec:ker_one_dim}. We believe that this result is satisfactory in two particular respects: \begin{enumerate}
\item The domains considered exhibit a rich geometry, e.g., the D'Angelo type may fail to be upper semicontinuous (see Section \ref{sec:examples}), which is a well-known difficulty appearing in dimension three and higher.  
\item The sharp order of subellipticity has a very neat geometric description and it turns out to be equal to the reciprocal of the upper semicontinuous envelop of a "$1$-type" implicitly appearing in the paper \cite{catlin_necessary} cited above. 
	\end{enumerate}

Let us remark that we believe that our method allows to establish quite precise bounds for the sharp order of subellipticity of more general domains than those covered by Theorem \ref{thm:main}, but this we have to leave to future investigations. \newline 

\par We now discuss a bit more precisely the content of the paper. Given a \emph{rigid domain}  \[
\Omega=\{(z,z_{n+1})\in \C^{n+1}\colon\ \im(z_{n+1})>\varphi(z)\}\subset \C^{n+1}, 
\]
where $\varphi:\C^n\rightarrow \R$ is a (germ of) smooth plurisubharmonic function vanishing at zero, in Section \ref{sec:rigid} we reduce the problem of bounding from below $s(\Omega, 0)$ to that of proving certain \emph{spectral gap estimates} for a family of quadratic forms $\mathbf{E}^{\xi\varphi}$ defined on test functions on $\C^n$ and depending on $\varphi$ and a "semiclassical" parameter $\xi\rightarrow+\infty$ (Proposition \ref{prp:spectral_gap}). 

In Section \ref{sec:dbar_up}, we elaborate on a $\bar\partial$-uncertainty principle (Lemma \ref{lem:dbar_unc}) originally introduced in \cite{dallara}. Combining it with a sublevel set estimate for one-variable complex polynomials (Lemma \ref{lem:sublevel}), we deduce a \emph{local one-dimensional estimate} designed to analyze the quadratic forms $\mathbf{E}^{\xi\varphi}$ in the case where $\varphi=|F|^2$ with $F:\C^n\rightarrow \C^m$ a polynomial mapping (Proposition \ref{lem:main_1d_estimate}). 

In order to pass from the local one-dimensional estimates of Section \ref{sec:dbar_up} to the $n$-dimensional spectral gap estimates needed for subellipticity, some geometric information about the polynomial mapping $F$ has to be used. A notion that appears to be relevant here is that of an \emph{approximate minimal eigenvector field}, which we discuss in Section \ref{sec:AME}. 

In Section \ref{sec:ker_one_dim} we apply the theory developed in the preceding sections to the proof of the already cited Theorem \ref{thm:main}. The reader may jump right away to Section \ref{sec:ker_one_dim} for the definition of the relevant class of domains and a statement of the theorem, if he so wishes. In Section \ref{sec:examples} we discuss in some detail the nature of the hypotheses of Theorem \ref{thm:main}, and Section \ref{sec:closing} contains some comments on what is left to be understood in the $\dbar$-uncertainty principle approach to subellipticity. Finally, an Appendix contains two elementary estimates on harmonic functions that are used in the proofs of Lemma \ref{lem:dbar_unc_metric} and Proposition \ref{prp:FvsCatlin}. 

The next section contains notation and terminology used systematically in the rest of the paper. 

\subsection{Notation and terminology} We denote by $\mathbb{D}$ the open unit disc in the complex plane and by $D(z,r)$ the disc of center $z\in \C$ and radius $r>0$. We use $\R^+$ for the group of positive real numbers and $\C^\times$ for the group of nonzero complex numbers. The $n$-dimensional complex projective space is denoted by $\mathbb{P}^n$ and the unit sphere in $\C^n$ is denoted by $\sfera^{2n-1}$. 
  
  Integrals with no explicit indication of the measure, e.g. $\int_D f$, are always w.r.t.~Lebesgue measure on the domain of integration $D$. 
  
  The approximate inequalities $A\lesssim_\lambda B$ and $B\gtrsim_\lambda A$ mean that $A\leq KB$ for a constant $K$ that is allowed to depend only on the parameter $\lambda$ and possibly the ambient dimension $n$. We will also occasionally use the big O notation.
  
  The standard Hermitian product and norm in $\C^n$ are denoted $(\cdot, \cdot)$ and $|\cdot|$ respectively. As usual, $\frac{\partial}{\partial z_j}=\frac{1}{2}\left(\frac{\partial}{\partial x_j}-i\frac{\partial}{\partial y_j}\right)$ and $\frac{\partial}{\partial \overline z_j}=\frac{1}{2}\left(\frac{\partial}{\partial x_j}+i\frac{\partial}{\partial y_j}\right)$, where $z_j=x_j+iy_j$ are the standard coordinates on $\C^n$. \newline
  
  \par     Let $U\subseteq \C^n$ be open and let $F_1,\ldots, F_m:U\rightarrow \C$ be holomorphic. Then the real-analytic function \begin{equation}\label{eq:HSOS}
  	\varphi(z)=\sum_{\ell=1}^m|F_\ell(z)|^2
  \end{equation}
  is said to be a \emph{Hermitian Sum of Squares} (in short, HSOS). Any HSOS is plurisubharmonic (see \eqref{eq:J^*J} below). If $m=n$, we say that $\varphi$ is a \emph{Hermitian Sum of $n$ Squares (HSOnS)}.  
  \par Let $U\subseteq\C^n$ be an open neighborhood of $0$ and let $\varphi:U\rightarrow \R$ be smooth and plurisubharmonic (in short, plush). We associate to $\varphi$ the \emph{rigid domain} \[
  \Omega= \{(z,z_{n+1})\in U\times \C\colon\ \im(z_{n+1})>\varphi(z)\}\subset \C^{n+1}. 
  \]   Plurisubharmonicity of $\varphi$ is equivalent to pseudoconvexity of $\Omega$ (at points of the boundary where $z\in U$). We always assume, without loss of generality, that  \begin{equation}\label{eq:gradient=0}
  	\varphi(0)=\frac{\partial\varphi}{\partial z_1}(0)=\ldots=\frac{\partial\varphi}{\partial z_n}(0)=0. 
  \end{equation}  
  This assumption of course means that the origin of $\C^{n+1}$ is on the boundary $b\Omega$ of the domain and the tangent space to $b\Omega$ at the origin is the real hyperplane of equation $\im(z_{n+1})=0$.\newline  
  Finally, we use the symbol $W^s$ for $L^2$-based Sobolev spaces of order $s\in \R$.

   \subsection{Acknowledgment} We would like to thank J.~P.~D'Angelo for his encouragement in pursuing this project, and A.~Vistoli and D.~Zaitsev for helpful comments on specific aspects of this work.

\section{Subellipticity via spectral gap estimates}\label{sec:rigid}

Let $U\subseteq \C^n$ be an open set and let $\varphi:U\rightarrow \R$ be smooth and plush. Denote by \begin{equation}\label{eq:H}
H = \left(\frac{\partial^2\varphi}{\partial \overline{z}_j\partial z_k}\right)_{j,k=1}^n\end{equation}
the Levi form (a.k.a., the complex Hessian) of $\varphi$. Plurisubharmonicity of $\varphi$ amounts to pointwise nonnegativity of the Hermitian matrix-valued function $H$, that is,  \[
(H(z)v,v)=\sum_{j,k=1}^n\frac{\partial^2\varphi}{\partial\overline{z}_j\partial z_k}(z)v_k\overline{v}_j\geq 0\qquad \forall v\in \C^n,\, z\in U.\] 

By the spectral theorem, $H(z)$ is unitarily diagonalizable with nonnegative eigenvalues. We denote by $\lambda_j(z)$ the $j$-th eigenvalue of $H(z)$ w.r.t.~the natural increasing ordering, that is, $\lambda_1(z)\leq \ldots\leq \lambda_n(z)$. Being the zeros of the characteristic polynomial of $H(z)$, which depends continuously on $z$, the eigenvalues $\lambda_j(z)$ are continuous in $z$ too. 

\begin{dfn}\label{dfn:energy_form}
	Let $\varphi:\C^n\rightarrow \R$ be smooth and plush. We define $\E^\varphi$ as the quadratic form defined on functions $w:\C^n\rightarrow \C$ by the formula \begin{equation}\label{energy_form}
		\E^\varphi(w) = \int_{\C^n}|\nabla^{0,1}w|^2e^{-2\varphi} + 2\int_{\C^n}\lambda_1|w|^2e^{-2\varphi}.
	\end{equation}
\end{dfn}
Here $\nabla^{0,1}w =\left(\frac{\partial w}{\partial\overline{z}_1}, \ldots, \frac{\partial w}{\partial\overline{z}_n}\right)$ is the $(0,1)$-part of the gradient of $w$ and $|\nabla^{0,1}w|=\sqrt{\sum_{j=1}^n\left|\frac{\partial w}{\partial \overline{z}_j}\right|^2}$ is its Euclidean norm. The precise domain of definition of the quadratic form $\E^\varphi$ will not be an issue, as we will only need to consider $\E^\varphi(w)$ when $w$ is a test function. The definition is extended to vector-valued functions $w=(w_1,\ldots, w_n)\in C^\infty_c(\C^n; \C^n)$ as follows: \begin{equation}\label{eq:energy_form_vector}
\E^\varphi(w) = \sum_{j=1}^n\E^\varphi(w_j).  
\end{equation}

\begin{rmk}
A close relative of the quadratic form \eqref{energy_form}, namely the energy form of a weighted $\dbar$-Laplacian on $(0,1)$-forms, plays a prominent role in the following papers (among others): \cite{christ, delin, raich, haslinger_helffer, dallara_pointwise, berger_dallara_son}.
	\end{rmk}

The main result of this section is the following. 

\begin{prp}[Spectral gap estimates imply subellipticity]\label{prp:spectral_gap}
	Let $s\leq 1$. Suppose that there exist a positive constant $C\geq 1$ such that the estimate \begin{equation}\label{spectral_gap}
		\E^{\xi\varphi}(u)\geq C^{-1} \xi^{2s}\int_{\C^n}|u|^2e^{-2\xi\varphi}
	\end{equation} holds for every $\xi\geq C$ and $u\in C_c^\infty(\{|z|<C^{-1}\})$. Then the $\dbar$-Neumann problem on the rigid domain \[
\Omega= \{(z,z_{n+1})\in \C^{n+1}\colon\ \im(z_{n+1})>\varphi(z)\}
\] satisfies a subelliptic estimate of order $s$ at the origin. 
\end{prp}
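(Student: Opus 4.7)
The plan is to exploit the translation invariance of $\Omega$ along $t:=\re(z_{n+1})$ by means of a partial Fourier transform in $t$, reducing the subelliptic estimate \eqref{eq:subelliptic} to the spectral gap hypothesis \eqref{spectral_gap} applied slice by slice in the Fourier dual variable $\xi$.

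First I would straighten the boundary via $\sigma := \im(z_{n+1}) - \varphi(z)$, so that $\Omega$ corresponds to the half-space $\{\sigma>0\}$ in the coordinates $(z,t,\sigma)$ while translation invariance in $t$ is preserved. For a test $(0,1)$-form $u = \sum_{j=1}^n u_j\, d\bar z_j + u_{n+1}\, d\bar z_{n+1}$ supported in $\overline\Omega\cap V$ with $V$ a small neighborhood of the origin (so the $z$-projection of the support lies in $\{|z|<C^{-1}\}$), the condition $u\in \mathrm{dom}(\dbar^*)$ becomes an explicit tangential constraint on $\{\sigma=0\}$ expressing $u_{n+1}$ in terms of the $u_j$'s and the derivatives $\partial\varphi/\partial\bar z_j$, which are small on $V$ by \eqref{eq:gradient=0}. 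Taking the partial Fourier transform $\widehat u(z,\xi,\sigma)$ in $t$, Plancherel decomposes each of $\lVert u\rVert^2$, $\lVert\dbar u\rVert^2$, $\lVert\dbar^* u\rVert^2$ as an integral over $\xi\in\R$, and the Sobolev norm $\lVert u\rVert_s^2$ is comparable — modulo tangential derivatives that are absorbed by the classical basic estimate — to $\int_\R (1+\xi^2)^s \lVert\widehat u_\xi\rVert^2\, d\xi$. It thus suffices to prove a slice inequality $\lVert(\widehat{\dbar u})_\xi\rVert^2+\lVert(\widehat{\dbar^* u})_\xi\rVert^2\gtrsim \xi^{2s}\lVert \widehat u_\xi\rVert^2$ for $\xi\geq C$; the remaining frequencies $|\xi|<C$ (including all negative $\xi$, for which the problem is essentially elliptic) are handled by the basic $L^2$ estimate combined with the $\lVert u\rVert^2$ slack on the right-hand side of \eqref{eq:subelliptic}.

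The key step is a semiclassical reduction bringing the slice energy into the form $\E^{\xi\varphi}$. For $\xi>0$, I would decompose
\[
\widehat u_\xi(z,\sigma) = v(z)\cdot\sqrt{2\xi}\, e^{-\xi\sigma} + \widehat u_\xi^\perp(z,\sigma),
\]
the first term being the $L^2(\R_+, d\sigma)$-projection onto the ground-state mode $\sqrt{2\xi}\, e^{-\xi\sigma}$ and the second orthogonal to it; the orthogonal part satisfies the one-dimensional spectral gap $\lVert\partial_\sigma \widehat u_\xi^\perp\rVert^2\gtrsim \xi\, \lVert\widehat u_\xi^\perp\rVert^2$. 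Applying the Kohn–Morrey–Hörmander identity to the slice and integrating against $d\sigma$, the weight $e^{-2\xi\sigma}$ combines with the Jacobian of the straightening to produce the weight $e^{-2\xi\varphi(z)}$ on $\C^n$, yielding
\[
\lVert(\widehat{\dbar u})_\xi\rVert^2+\lVert(\widehat{\dbar^* u})_\xi\rVert^2 \gtrsim \E^{\xi\varphi}(v) + \xi\,\lVert \widehat u_\xi^\perp\rVert^2 - O(1)\lVert \widehat u_\xi\rVert^2,
\]
where the $|\nabla^{0,1}v|^2$-summand of $\E^{\xi\varphi}$ arises from tangential $\bar z_j$-derivatives of $\widehat u$ and the $2\lambda_1|v|^2$-summand encodes the Levi-form boundary integration together with the normalization of the ground state. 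Hypothesis \eqref{spectral_gap} then gives $\E^{\xi\varphi}(v)\geq C^{-1}\xi^{2s}\lVert v\rVert^2_{L^2(e^{-2\xi\varphi})}$, which together with $s\leq 1$ (so that $\xi\geq \xi^{2s}$ for $\xi\geq 1$) and Pythagoras recovers the desired $\xi^{2s}\lVert\widehat u_\xi\rVert^2$ lower bound; the $O(1)$ error is absorbed into the $\lVert u\rVert^2$ term of \eqref{eq:subelliptic}.

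The main obstacle will be the detailed bookkeeping ensuring that the Kohn–Morrey–Hörmander identity on the slice, combined with the straightening Jacobian and the ground-state decomposition, reproduces \emph{precisely} the quadratic form $\E^{\xi\varphi}(v)$ with the correct factor $2$ in front of $\lambda_1$ in \eqref{energy_form}, and that the coefficient $u_{n+1}$ eliminated via the boundary condition contributes only terms of order $O(1)\lVert u\rVert^2$. The latter uses critically that $V$ is small and that $\partial\varphi/\partial\bar z_j$ vanishes at $0$ by \eqref{eq:gradient=0}, so the boundary coupling can be absorbed into lower-order slack. This amounts to a careful specialization of the Hörmander identity to the rigid geometry together with a Bargmann-type reduction in the semiclassical parameter $\xi$.
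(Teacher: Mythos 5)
Your strategy is genuinely different from the paper's, and it is worth comparing the two carefully. The paper first performs a \emph{boundary reduction} (Proposition \ref{prp:reduction_boundary}), discarding the normal direction entirely and replacing $Q(u)$ by the boundary quadratic form $Q_b(v)$, and only then applies the partial Fourier transform and a microlocal partition of $T^*b\Omega$ into the trivial ball, an equatorial region, and the $\pm$ characteristic rays. You instead propose to keep the normal variable $\sigma$, Fourier transform while still on $\Omega$, and split each slice into a ground-state mode $\sqrt{2\xi}\,e^{-\xi\sigma}$ plus its orthogonal complement. This Bargmann-type reduction is a recognizable classical alternative and would, if carried out, let you exploit the normal-direction coercivity directly. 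However, as written the proposal has some genuine gaps.

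First, the most serious issue is the treatment of negative frequencies. You write that all negative $\xi$ are handled ``by the basic $L^2$ estimate combined with the $\|u\|^2$ slack'' because the problem is ``essentially elliptic'' there. The $\|u\|^2$ slack only controls bounded frequencies; for $\xi\to-\infty$ you need a quantitative bound that produces a factor $|\xi|^{2s}$, and calling the direction ``essentially elliptic'' is not a proof. In the paper's boundary framework the $-$ microlocal direction is \emph{not} elliptic and genuinely requires the spectral gap hypothesis: it is treated using the \emph{second} basic identity \eqref{identity_-}, whose Levi-form term is $\mathrm{tr}(H)I_n-H$, together with the observation that all eigenvalues of $\mathrm{tr}(H)I_n-H$ are $\geq\lambda_1$, so the same quantity $\E^{\xi\varphi}$ lower-bounds $E_b$ in that direction too (see \eqref{eq:negative_support} and \eqref{eq:negative_support_2}). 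If you truly keep the normal $\sigma$-direction you may indeed recover coercivity for $\xi<0$ from the absence of an $L^2(\R_+)$ ground state, but that argument is not free: it requires showing that the Robin-type boundary coupling induced by $\dbar u\in\mathrm{dom}(\dbar^*)$ is benign for $\xi<0$ uniformly in the (possibly degenerate) Levi form, and you have not given it. As stated this is a missing step, not a ``basic estimate''.

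Second, the mechanism you invoke for producing the weight $e^{-2\xi\varphi}$ is wrong. The change of variables $\sigma=\im(z_{n+1})-\varphi(z)$ has Jacobian identically $1$, so no weight is generated by a Jacobian. What actually happens (and what the paper does in Proposition \ref{prp:fourier}) is an operator conjugation: the tangential vector field $L_j=\partial_{z_j}+i\varphi_{z_j}\partial_t$ is conjugated under the partial Fourier transform to $e^{\xi\varphi}\circ\partial_{z_j}\circ e^{-\xi\varphi}$, and it is the resulting substitution $w=e^{\xi\varphi}v$ (not a Jacobian) that introduces $e^{-2\xi\varphi}$ into the $L^2$-norm. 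Getting this right matters: it is precisely this conjugation that converts the slice energy into $\E^{\xi\varphi}$.

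Finally, the claim that $\|u\|_s^2$ is ``comparable, modulo tangential derivatives absorbed by the classical basic estimate, to $\int(1+\xi^2)^s\|\widehat u_\xi\|^2\,d\xi$'' hides the full microlocal partition in the $z$-cotangent variables. One needs a cone decomposition of $\tau\in\R^{2n+1}$ into a small ball, an equatorial region $\{|\tau_{2n+1}|<\delta|\tau|\}$ where microlocal ellipticity of $\Box_b$ and the hypothesis $s\leq1$ give the estimate (Remark \ref{rmk:microlocal_ellipticity}), and the $\pm$ polar caps; only on the polar caps does one pass to the $\xi$-slices. Your one-sentence compression is in the right direction but would need to be unpacked into this cone decomposition, with careful treatment of the pseudodifferential commutators, to match the rigor of the paper's Section \ref{sec:rigid}. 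Also, a minor point: the orthogonal-mode spectral gap should be $\gtrsim\xi^2\|\widehat u_\xi^\perp\|^2$, not $\gtrsim\xi\|\widehat u_\xi^\perp\|^2$.
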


Before turning to the proof of this proposition, let us discuss the behavior of estimate \eqref{spectral_gap} under scalings. Define, for $R>0$,
\[
\Dil_Rw(z) = R^{-n}w(R^{-1}z).
\]
Notice that the real Jacobian determinant of $z\mapsto Rz$ equals $R^{2n}$, and hence $\lVert\Dil_Rw\rVert_{L^2} = \lVert w\lVert_{L^2}$ for every $R>0$.  

\begin{prp}[$\E^{\varphi}$ behaves well under scalings]\label{prp:scaling} Define, for $R>0$, \[
	\varphi_R(z)=\varphi(R^{-1}z).
	\] Then \begin{equation}\label{eq:energy_scaling}
		\E^{\varphi}(w) = R^2 \E^{\varphi_R}(\mathrm{Dil}_Rw).
	\end{equation} As a consequence, if $d>0$ and
	\[s=\frac{1-\delta}{d},\]
	where $\delta\in [0,1]$, then the spectral gap estimate \eqref{spectral_gap} of Proposition \ref{prp:spectral_gap} holds if and only if
	\begin{equation}\label{eq:spectral_gap_rescaled}
		\E^{R^d\varphi_R}(w)\geq C^{-1}R^{-2\delta}\int_{\C^n}|w|^2e^{-R^d\varphi_R}
	\end{equation} holds for every $R\geq C^{\frac{1}{d}}$ and $w\in C^\infty_c(\{|z|\leq C^{-1}R \})$.
\end{prp}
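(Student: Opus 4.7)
\emph{Plan.} The proposition is a direct computation. My approach is to verify the identity \eqref{eq:energy_scaling} from the definition of $\E^{\varphi}$ and then deduce the equivalence of the two spectral gap estimates by a substitution with $\xi = R^d$ and $w = \Dil_R u$.

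For the identity, each ingredient of $\E^{\varphi_R}(\Dil_R w)$ transforms in a controlled way. The chain rule gives $\frac{\partial}{\partial \overline{z}_j}(\Dil_R w)(z) = R^{-n-1}\frac{\partial w}{\partial \overline{z}_j}(R^{-1}z)$, so $|\nabla^{0,1}(\Dil_R w)(z)|^2 = R^{-2n-2}|\nabla^{0,1}w(R^{-1}z)|^2$. The Levi form of $\varphi_R$ at $z$ equals $R^{-2}$ times the Levi form of $\varphi$ at $R^{-1}z$, so $\lambda_1^{\varphi_R}(z) = R^{-2}\lambda_1^\varphi(R^{-1}z)$. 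Performing the change of variable $y = R^{-1}z$ (Jacobian $R^{2n}$) in both terms, the powers of $R$ combine so that the kinetic integral ($R^{-2n-2}\cdot R^{2n} = R^{-2}$) and the potential integral ($R^{-2}\cdot R^{-2n}\cdot R^{2n} = R^{-2}$) rescale by the same factor $R^{-2}$. Thus $\E^{\varphi_R}(\Dil_R w) = R^{-2}\E^\varphi(w)$, which is \eqref{eq:energy_scaling}.

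For the equivalence, applying the identity with $\xi\varphi$ in place of $\varphi$ (noting $(\xi\varphi)_R = R^d\varphi_R$) gives $\E^{\xi\varphi}(u) = R^2\E^{R^d\varphi_R}(w)$, while the same change of variable shows that weighted $L^2$-norms are preserved: $\int|u|^2 e^{-2\xi\varphi} = \int|w|^2 e^{-2R^d\varphi_R}$. Plugging these into \eqref{spectral_gap}, dividing by $R^2$, and using $\xi^{2s} = R^{2sd}$, I get $\E^{R^d\varphi_R}(w) \geq C^{-1} R^{2sd-2}\int|w|^2 e^{-2R^d\varphi_R}$, and the relation $s=(1-\delta)/d$ collapses $2sd-2$ to $-2\delta$, giving \eqref{eq:spectral_gap_rescaled}. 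The support condition transforms as $\{|z|<C^{-1}\}\leftrightarrow\{|z|<C^{-1}R\}$ under $\Dil_R$, and $\xi\geq C$ becomes $R\geq C^{1/d}$; since $\Dil_R$ is a bijection on $C^\infty_c$, the equivalence holds in both directions. There is no real obstacle: it is all bookkeeping of factors of $R$, and the only conceptual point is that the $\lambda_1$ factor in the potential term delivers precisely the $R^{-2}$ needed to match the scaling of the kinetic term, so that the full energy $\E^\varphi$ enjoys a clean single-power scaling law.
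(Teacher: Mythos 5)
Your computation is correct and follows the same route as the paper: compute how $\nabla^{0,1}$ and $\lambda_1$ transform under the dilation, change variables to collect the factor $R^{-2}$, and then specialize the identity with $\xi = R^d$ to derive the rescaled spectral gap condition. The only difference is that you spell out the bookkeeping the paper compresses into "follows by another simple change of variable"; substantively there is nothing to distinguish the two arguments.
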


\begin{proof}
	First of all, notice that 
	\begin{eqnarray*}
		|\nabla^{0,1}\left(\Dil_Ru\right)|^2 = R^{-2}|\Dil_R(\nabla^{0,1}u)|^2. 
	\end{eqnarray*}	
	
	Denoting by $H_R$	the Levi form of $\varphi_R$, we have $H_R(z)=R^{-2}H(R^{-1}z)$. If $\lambda_{1, R}(z)$ is the minimal eigenvalue of $H_R(z)$, then $R^2\lambda_{1, R}(z)=\lambda_1(R^{-1}z)$. Thus, changing variables we get \begin{eqnarray*}
		\E^{\varphi}(w) &=&\int_{\C^n}|\nabla^{(0,1)}w|^2e^{-2\varphi}+2\int_{\C^n}\lambda_1|w|^2 e^{-2\varphi}\\
		&=&\int_{\C^n} |\Dil_R\left(\nabla^{(0,1)}w\right)|^2e^{-2\varphi_R}+2\int_{\C^n}\lambda_1(R^{-1} z)|\Dil_Rw|^2 e^{-2\varphi_R}\\
		&=&R^2\left\{\int_{\C^n} |\nabla^{(0,1)}\left(\Dil_Rw\right)|^2e^{-2\varphi_R}+2\int_{\C^n}\lambda_{1,R}|\Dil_Rw|^2 e^{-2\varphi_R}\right\}.
	\end{eqnarray*} 
	This proves \eqref{eq:energy_scaling}. The rest of the statement follows by another simple change of variable.
\end{proof}

Condition \eqref{eq:spectral_gap_rescaled} is of course tailored for $\varphi$ homogeneous of degree $d$, that is, $\varphi(Rz)=R^d\varphi(z)$ for every $R>0$. In this case, in order to show that the $\dbar$-Neumann problem on the rigid domain associated to $\varphi$ satisfies a subelliptic estimate of order $s$ at the origin, one needs to show that	\begin{equation}\label{eq:sufficient_condition}
	\E^{\varphi}(w)\gtrsim R^{-2\delta}\int_{\C^n}|w|^2e^{-\varphi} \quad  \forall w\in C^\infty_c(\{|z|< R \}),\,\forall R \, \text{ large}, 
\end{equation} 
where $\delta=1-sd$. 

The remainder of this section consists of a series of standard manipulations of the subelliptic estimate \eqref{eq:subelliptic} that finally lead us to the quadratic forms $\mathbf{E}^{\xi\varphi}$ and the sufficient condition of Proposition \ref{prp:spectral_gap}. 

\subsection{Preliminaries on pseudoconvex rigid domains}\label{sec:pseudoconvex_rigid_domains} 

Since a defining function for $\Omega$ is given by $r(z,z_{n+1}) = \varphi(z)-\frac{z_{n+1}-\overline{z}_{n+1}}{2i}$, the $n$ $\C$-linearly independent vector fields \begin{equation}\label{eq:L_j}
	L_j = \frac{\partial}{\partial z_j} + 2i\frac{\partial \varphi}{\partial z_j}\frac{\partial}{\partial z_{n+1}}\quad (j=1,\ldots, n)
\end{equation}
restrict to a global frame of the CR bundle $T^{1,0}b\Omega:=\C \otimes Tb\Omega\cap T^{1,0}\C^{n+1}$. Adding $L_{n+1}=-2i\frac{\partial}{\partial z_{n+1}}$ to the collection \eqref{eq:L_j}, we obtain a global frame of $T^{1,0}\C^{n+1}$. We equip $\C^{n+1}$ with the Hermitian metric $h$ defined by the condition that this frame is orthonormal. The operators $\dbar$ and $\dbar^*$ will be defined with respect to this metric. In the sequel, we identify $b\Omega\subset \C^{n+1}$ with $\C^n\times \R$ via the global system of coordinates
\begin{equation}\label{eq:coordinates}
	z=(z_1,\ldots, z_n)\in\C^n, \quad t=\re(z_{n+1})\in \R.
\end{equation}

\subsection{A standard reduction to the boundary}\label{sec:reduction_boundary}
The boundary reduction to be discussed now is well-known and works equally well if the domain $\Omega$ is not rigid. We confine our discussion to the rigid case, in order to avoid introducing additional notation that is unnecessary for our purposes. 

If $\omega_1=dz_1,\ldots,\omega_n=dz_n, \omega_{n+1}=\partial r$ is the dual frame to $\{L_j\}_{1\leq j\leq n+1}$, a $(0,1)$-form $u$ can be uniquely represented as \[
u=\sum_{j=1}^{n+1}u_j\overline{\omega}_j
\]
and, if $u\in C^\infty_{c,\, (0,1)}(\C^{n+1})$, then $u_{|\Omega}\in \mathrm{dom}(\dbar^*)$ if and only if $u_{n+1}=0$ identically on $b\Omega$. We have the formulas \[
\dbar u=\sum_{1\leq j<k\leq n+1}\left(\overline L_ju_k-\overline L_ku_j\right)\, \overline\omega_j\wedge \overline\omega_k, \quad \dbar^*u=-\sum_{1\leq j \leq n+1}(L_j+b_j)u_j, 
\]
where $b_j=L_j(\log\rho)$ and $\rho$ is the density of the volume form of $h$ w.r.t.~Lebesgue measure. In view of the orthonormality of $\{\omega_j\}$, we have \[
|\dbar u|_h^2=\sum_{1\leq j<k\leq n+1}\left|\overline L_ju_k-\overline L_ku_j\right|^2.
\]
Hence, if $u$ is a smooth $(0,1)$-form supported on a fixed neighborhood of the origin and $u\in \mathrm{dom}(\dbar^*)$, then \begin{eqnarray*}
	Q(u)&:=&\lVert \dbar u\rVert^2_{L^2}+\lVert\dbar^*u\rVert^2_{L^2}+\lVert u\rVert^2_{L^2}\\
	&\simeq&\sum_{1\leq j<k\leq n+1}\int_\Omega\left|\overline L_ju_k-\overline L_ku_j\right|^2+\int_\Omega|\sum_{1\leq j\leq n+1}L_j u_j|^2+\sum_{1\leq j\leq n+1}\int_\Omega|u_j|^2
\end{eqnarray*}

As is well-known, the $L^2$-Sobolev norm of order $1$ of $u_{n+1}$ is controlled by $Q(u)$ (see, e.g., \cite[Lemma 2.12]{straube_book}). Thus, \begin{eqnarray}
	\notag Q(u)&\gtrsim& Q(u)+\lVert u_{n+1}\rVert_{W^1(\Omega)}^2\\
	\label{eq:Q_to_Qb}&\gtrsim& \sum_{1\leq j<k\leq n}\int_\Omega\left|\overline L_ju_k-\overline L_ku_j\right|^2+\int_\Omega|\sum_{1\leq j\leq n}L_j u_j|^2+\sum_{1\leq j\leq n}\int_\Omega|u_j|^2, 
\end{eqnarray}
where we are left only with tangential operators and components. Since there are no constraints on $u_1,\ldots, u_n$, our task can now be reduced to a boundary estimate. In detail, if $v=(v_1,\ldots, v_n)$ is an $n$-tuple of test functions defined on $b\Omega$, we set \[
Q_b(v)=\sum_{1\leq j<k\leq n}\int_{b\Omega}\left|\overline L_jv_k-\overline L_kv_j\right|^2+\int_{b\Omega}|\sum_{1\leq j\leq n}L_j v_j|^2+\sum_{1\leq j\leq n}\int_{b\Omega}|v_j|^2, 
\]
where integrals are with respect to Lebesgue measure in the boundary coordinates \eqref{eq:coordinates}. For later reference, we observe that \[
L_j=\frac{\partial}{\partial z_j}+i\frac{\partial \varphi}{\partial z_j}\frac{\partial}{\partial t}\qquad (j=1,\ldots, n), 
\] in these coordinates. Of course, translation invariance in the $t$-direction of these operators reflects translation invariance of the rigid domain $\Omega$. 

\begin{prp}[Reduction to a boundary subelliptic estimate]\label{prp:reduction_boundary}
	Assume that there exists a neighborhood $V$ of the origin on the boundary $b\Omega$ such that\begin{equation}\label{eq:boundary_subellipticity}
		Q_b(v)\gtrsim \lVert v\rVert_{W^s(b\Omega)}^2=\sum_{1\leq j\leq n}\lVert v_j\rVert_{W^s(b\Omega)}^2
	\end{equation} holds for every $n$-tuple $v=(v_1,\ldots, v_n)$ of test functions supported on $V$. Then the $\dbar$-Neumann problem on $\Omega$ satisfies a subelliptic estimate of order $s$ at the origin. 
\end{prp}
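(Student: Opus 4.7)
The plan is to reduce the desired subelliptic estimate in $\Omega$ to the hypothesized boundary estimate by foliating $\overline\Omega$ by leaves parallel to $b\Omega$, applying the hypothesis leaf by leaf, and upgrading the resulting tangential Sobolev estimate to a full one.

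First, I would introduce coordinates $(z, t, \tilde{s}) \in \C^n \times \R \times [0, \infty)$ on $\overline\Omega$ via the diffeomorphism
\[
(z, t, \tilde{s}) \longmapsto (z,\, t + i(\varphi(z) + \tilde{s})),
\]
so that $b\Omega$ corresponds to $\tilde s = 0$ and Lebesgue measure on $\overline\Omega$ equals $dz\, dt\, d\tilde s$. A direct chain-rule computation gives that in these coordinates the CR fields $L_j$ with $j \leq n$ retain exactly the boundary expression $L_j = \partial/\partial z_j + i(\partial\varphi/\partial z_j)\partial/\partial t$, and in particular involve no $\partial/\partial \tilde s$. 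Consequently, the integrand on the right-hand side of \eqref{eq:Q_to_Qb} depends only on the $(z,t)$-derivatives of the tangential components $u_1, \ldots, u_n$, and Fubini yields
\[
Q(u) \gtrsim \int_0^\infty Q_b\bigl(u_1(\cdot,\cdot,\tilde{s}), \ldots, u_n(\cdot,\cdot,\tilde{s})\bigr)\, d\tilde s.
\]
After shrinking the support of $u$ so that every $\tilde s$-slice is contained in $V$, applying the hypothesis slicewise gives
\[
Q(u) \gtrsim \int_0^\infty \sum_{j=1}^n \|u_j(\cdot,\cdot,\tilde{s})\|_{W^s(b\Omega)}^2\, d\tilde s,
\]
that is, the tangential $L^2$-Sobolev norm of order $s$ of $(u_1, \ldots, u_n)$ is dominated by $Q(u)$.

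The main obstacle is the upgrade from this tangential Sobolev estimate to a full $W^s(\Omega)$ bound. I would argue as follows. The normal component $u_{n+1}$ is already controlled in $W^1(\Omega)$ by the classical basic estimate used to derive \eqref{eq:Q_to_Qb}, hence in $W^s$ for $s \leq 1$ (and $s > 1$ may be excluded a priori, since $\dbar$ is first order). For the tangential components $u_j$ with $j \leq n$, extracting $\overline{L}_j u_{n+1} - \overline{L}_{n+1} u_j$ from $\dbar u$ and combining with the $W^1$-control of $u_{n+1}$ yields $\|\overline{L}_{n+1} u_j\|_{L^2} \lesssim Q(u)^{1/2}$. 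Since $\overline{L}_{n+1} = i\partial_t - \partial_{\tilde s}$ in the new coordinates, a Fourier transform in the translation-invariant variable $t$ converts this into an $L^2$-bound on $(-\tau - \partial_{\tilde s})\hat u_j$ at each $t$-frequency $\tau$. Combining this with the tangential Sobolev bound and a standard half-space Sobolev interpolation---based on the equivalence $(1 + |\xi|^2 + |\sigma|^2)^s \simeq (1 + |\xi|^2)^s + |\sigma|^{2s}$ for $s \in [0, 1]$ together with the H\"older-type inequality $\|D_{\tilde s}^s u_j\|^2 \leq \|u_j\|^{2(1-s)} \|\partial_{\tilde s} u_j\|^{2s}$---then completes the proof of $\|u\|_{W^s(\Omega)}^2 \lesssim Q(u)$.
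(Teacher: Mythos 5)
Your slicing/Fubini reduction is exactly the paper's argument: introduce the coordinates $(z,t,\tilde s)$ adapted to the translation-invariance of the frame, observe that $L_j$ ($j\le n$) has no $\partial_{\tilde s}$ component, apply the boundary hypothesis on each slice, and integrate in $\tilde s$. That part is correct and essentially verbatim. The paper then closes by citing the well-known fact that a tangential Sobolev estimate of order $s\le 1$, together with the basic estimate, implies the full $W^s$ estimate (Section 3 of Kohn 1979); you instead try to prove this upgrade directly, and the attempt has a genuine gap.

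The gap is in the interpolation step. You invoke the inequality $\lVert D_{\tilde s}^s u_j\rVert^2\le \lVert u_j\rVert^{2(1-s)}\lVert\partial_{\tilde s}u_j\rVert^{2s}$, which requires an $L^2(\Omega)$ bound on the full normal derivative $\partial_{\tilde s}u_j$. But what you have is $\lVert\overline L_{n+1}u_j\rVert_{L^2}\lesssim Q(u)^{1/2}$, i.e., $\lVert(i\partial_t-\partial_{\tilde s})u_j\rVert\lesssim Q(u)^{1/2}$. Writing $\partial_{\tilde s}u_j=i\partial_t u_j-\overline L_{n+1}u_j$, the term $\partial_t u_j$ is a \emph{first-order tangential} derivative, and your tangential Sobolev bound only controls it at order $s<1$; it is \emph{not} $L^2$-bounded by $Q(u)^{1/2}$. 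So $\lVert\partial_{\tilde s}u_j\rVert_{L^2}$ is simply not controlled, and the Hölder-type interpolation you wrote cannot be applied. The Fourier reformulation ``$(-\tau-\partial_{\tilde s})\hat u_j$ at each frequency $\tau$'' does not repair this: the obstruction is precisely the uncontrolled factor $|\tau|$. (There is also a secondary issue glossed over: extending by zero across $\tilde s=0$ introduces a boundary trace term, so $\lVert\partial_{\tilde s}u_j\rVert_{L^2(\Omega)}$ does not translate into a full-line Fourier multiplier bound; your displayed equivalence $(1+|\xi|^2+|\sigma|^2)^s\simeq(1+|\xi|^2)^s+|\sigma|^{2s}$ is being used as if it did.) The argument can be repaired, but requires the correct half-space Sobolev estimate: for $0\le s\le 1$ and $u$ compactly supported near $b\Omega$, $\lVert u\rVert_{W^s(\Omega)}^2\lesssim |||u|||_s^2+\lVert\partial_{\tilde s}u\rVert_{W^{s-1}_T}^2$, where $|||\cdot|||_s$ and $\lVert\cdot\rVert_{W^{s-1}_T}$ are \emph{tangential} Sobolev norms. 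In that formulation, the $\partial_t u_j$ contribution to $\partial_{\tilde s}u_j$ is absorbed because $\lVert\partial_t u_j\rVert_{W^{s-1}_T}\lesssim |||u_j|||_s$, and $\lVert\overline L_{n+1}u_j\rVert_{W^{s-1}_T}\lesssim\lVert\overline L_{n+1}u_j\rVert_{L^2}\lesssim Q(u)^{1/2}$ since $s-1\le 0$. This is the content of the reference the paper cites; as written, your version of the upgrade does not go through.
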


\begin{proof} Let $\widetilde{V}$ be a small neighborhood of the origin in $\C^{n+1}$ such that $\widetilde{V}\cap b\Omega\subseteq V$, and let $u\in C^\infty_{c, (0,1)}(\widetilde{V})$ be such that $u_{n+1}=0$ on $b\Omega$. \newline The frame $L_j$ is invariant under translations in $z_{n+1}$, and in particular in $\im(z_{n+1})$. Hence, applying \eqref{eq:boundary_subellipticity} to $v_h(z,t)=(u_j(z,t+i(\varphi(z))+h))_j$ and integrating in $h>0$, we conclude that \eqref{eq:Q_to_Qb} controls the tangential Sobolev norm of order $s$ of $u_j$ for $j=1,\ldots, n$. It is well-known that this implies the conclusion. See, e.g., Section 3 of \cite{kohn_1979}. 
\end{proof}

It is convenient to use the notation \begin{equation}\label{eq:E_b}
E_b(v):=\sum_{1\leq j<k\leq n}\int_{b\Omega}\left|\overline L_jv_k-\overline L_kv_j\right|^2+\int_{b\Omega}|\sum_{1\leq j\leq n}L_j v_j|^2, 
\end{equation}
for the "main term" in $Q_b$. 

\begin{rmk} The reader familiar with the tangential Cauchy--Riemann complex may recognize that $E_b$ is the quadratic form of the Kohn Laplacian $\Box_b$ on the CR hypersurface $b\Omega$, for an appropriate choice of metric and background measure.  \end{rmk}

\subsection{Two basic identities} We think of the $n$-tuples of test functions $v=(v_1,\ldots, v_n)$ of Proposition \ref{prp:reduction_boundary}  as $\C^n$-valued functions on $b\Omega\simeq \C^n_z\times \R_t$. In the next proposition, scalar differential operators act componentwise on $v$. Also, the $n\times n$ matrix-valued function $H(z)$ is lifted to a function on $b\Omega$ and acts on $\C^n$-valued functions by pointwise column-vector multiplication. 

\begin{prp}[Basic identities]\label{prp:basic_identities} Let $v\in C^\infty_c(b\Omega;\C^n)$. Then 
	\begin{eqnarray}
		\label{identity_+} E_b(v)&=&\sum_{1\leq j\leq n}\int_{b\Omega} |\overline{L}_jv|^2-2i\int_{b\Omega}\left( H\frac{\partial v}{\partial t},v\right)\\
		\label{identity_-} &=&\sum_{1\leq j\leq n}\int_{b\Omega} |L_jv|^2+2i\int_{b\Omega}\left((\mathrm{tr}(H)I_n-H)\frac{\partial v}{\partial t},v\right), 
	\end{eqnarray} 
	where $E_b$ is defined by formula \eqref{eq:E_b} and $I_n$ is the $n\times n$ identity matrix. 
\end{prp}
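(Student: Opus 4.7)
The plan is to prove both identities via expansion of the squares, integration by parts with respect to Lebesgue measure on $\C^n\times\R$, and a single commutator computation.

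First I would symmetrize
\[
\sum_{j<k}|\overline L_j v_k-\overline L_k v_j|^2=\sum_{j,k}|\overline L_j v_k|^2-\sum_{j,k}(\overline L_j v_k)\overline{(\overline L_k v_j)}
\]
and expand $|\sum_j L_j v_j|^2=\sum_{j,k}(L_j v_j)\overline{(L_k v_k)}$, so that $E_b(v)$ becomes $\sum_{j,k}\int|\overline L_j v_k|^2$ plus two off-diagonal sesquilinear pairings. Then I would integrate by parts: in the rigid setting, since $\varphi$ is real and independent of $t$, a direct calculation with Lebesgue measure on $b\Omega\simeq\C^n\times\R$ gives the clean adjoint relations $L_j^*=-\overline L_j$ and $\overline L_j^*=-L_j$. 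Applying these to the two pairings converts them into second-order expressions involving $L_j\overline L_k$ and $\overline L_j L_k$ respectively, and after a trivial relabeling of dummy indices the combined contribution collapses to $\sum_{j,k}(v_j,[L_k,\overline L_j]v_k)$.

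The commutator is elementary: because $\partial_t\varphi\equiv 0$ and all coefficients are $t$-independent, all "bad" cross terms drop out and one finds $[\overline L_j,L_k]=2iH_{jk}\partial_t$, hence $[L_k,\overline L_j]=-2iH_{jk}\partial_t$. Substituting this, and being careful with the antilinearity of the Hermitian inner product in its second argument, the commutator contribution equals $2i\int(Hv,\partial_t v)=-2i\int(H\partial_t v,v)$, the last equality being a single integration by parts in $t$ (with $\partial_t H\equiv 0$). This yields \eqref{identity_+}. For \eqref{identity_-}, I would apply the same idea componentwise to the diagonal difference $\int|L_j v_k|^2-\int|\overline L_j v_k|^2=(v_k,[L_j,\overline L_j]v_k)=-2i(v_k,H_{jj}\partial_t v_k)$ (noting $H_{jj}$ is real), and sum over $j,k$ so that $\sum_j H_{jj}$ combines into $\mathrm{tr}(H)$; this gives $\sum_j\int|L_j v|^2-\sum_j\int|\overline L_j v|^2=-2i\int(\mathrm{tr}(H)I_n\,\partial_t v,v)$. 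Subtracting this from \eqref{identity_+} produces \eqref{identity_-}.

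The only real obstacle is sign bookkeeping: the combination of (i) antilinearity of $(\cdot,\cdot)$ in the second slot, (ii) the identity $\overline{L_j f}=\overline L_j\overline f$, which swaps $L_j\leftrightarrow\overline L_j$ under complex conjugation, and (iii) the explicit factors of $i$ coming out of the commutator, makes sign errors very easy. A useful sanity check, which I would carry out first, is that $\int(H\partial_t v,v)$ must be purely imaginary (because $H$ is Hermitian and $\partial_t$ is anti-self-adjoint), together with verifying both identities directly in the toy case $n=1$, $\varphi=|z|^2$.
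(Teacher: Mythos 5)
Your proposal is correct and follows essentially the same route as the paper's proof: symmetrize and expand the squares in $E_b(v)$, integrate by parts using the adjoint relations $L_j^\ast=-\overline{L}_j$ and $\overline{L}_j^\ast=-L_j$ (valid for Lebesgue measure on $\C^n\times\R$ precisely because $\varphi$ is real and $t$-independent), and substitute the commutator $[L_k,\overline{L}_j]=-2iH_{jk}\,\partial_t$; the derivation of \eqref{identity_-} from \eqref{identity_+} via the diagonal differences $\int|L_j v_k|^2-\int|\overline{L}_j v_k|^2$ is likewise what the paper does. One caveat, illustrating exactly the sign hazard you flag: the intermediate equality should read $(v_k,[L_j,\overline{L}_j]v_k)=+2i\,(v_k,H_{jj}\partial_t v_k)$ (antilinearity in the second slot conjugates the $-2i$), and the passage from $\sum_k(v_k,\mathrm{tr}(H)\partial_t v_k)=(v,\mathrm{tr}(H)\partial_t v)$ to $-(\mathrm{tr}(H)\partial_t v,v)$ requires the $t$-integration by parts you leave implicit — the two omissions cancel, so your displayed formula and the final identity are nonetheless correct.
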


\begin{proof}
	This is a standard commutator computation. Expanding the squares in the formula defining $E_b(v)$ and integrating by parts twice one obtains \begin{eqnarray*}
		E_b(u)&=&\sum_{1\leq j,k\leq n}\left\{\int_{b\Omega} |\overline{L}_jv_k|^2 +\int_{b\Omega} (L_kv_k\overline{L_jv_j}-\overline{L}_jv_kL_k\overline{v}_j)\right\}\\
		&=&\sum_{1\leq j,k\leq n}\left\{\int_{b\Omega} |\overline{L}_jv_k|^2 +\int_{b\Omega} (L_kv_k\overline{L_jv_j}+\overline{L}_j\left(L_kv_k\right)\overline{v}_j+[L_k, \overline{L}_j]v_k\overline{v}_j)\right\}\\
		&=&\sum_{1\leq j,k\leq n}\left\{\int_{b\Omega} |\overline{L}_jv_k|^2 +\int_{b\Omega} [L_k, \overline{L}_j]v_k\overline{v}_j\right\}.
	\end{eqnarray*}
	Identity \eqref{identity_+} follows from the commutator formula $[L_k, \overline{L}_j]=-2i\frac{\partial^2\varphi}{\partial\overline{z}_j\partial z_k}\frac{\partial}{\partial t}$. Identity \eqref{identity_-} can then be deduced by the further integration by parts identity \[
	\int_{b\Omega}|\overline{L}_jv_k|^2 = \int_{b\Omega}(|L_j v_k|^2-[L_j,\overline{L}_j])v_k\overline{v}_k = \int_{b\Omega}\left(|L_j v_k|^2+2i\frac{\partial^2\varphi}{\partial\overline{z}_j\partial z_j}\frac{\partial v_k}{\partial t}\overline{v}_k\right). 
	\]
\end{proof}

\begin{rmk}\label{rmk:microlocal_ellipticity} Let $\delta>0$ be small. By assumption \eqref{eq:gradient=0}, in a small enough neighborhood of the origin $\left|\frac{\partial\varphi}{\partial z_j}\right|^2\leq \frac{\delta}{n}$ and the maximal eigenvalue of $H$ is bounded by a constant $C$. Then, if $v\in C^\infty_c(b\Omega; \C^n)$ is supported on such a neighborhood, we have 
	\[|\overline{L}_jv|^2\geq \frac{1}{2}\left|\frac{\partial v}{\partial \overline{z}_j}\right|^2-\frac{\delta}{n}\left|\frac{\partial v}{\partial t}\right|^2 \quad \text{and}\quad 2\left|\left( H\frac{\partial v}{\partial t},v\right)\right|\leq \delta\left|\frac{\partial v}{\partial t}\right|^2+\delta^{-1}C^2|v|^2.\]
	The basic identity \eqref{identity_+} then gives 
	\begin{eqnarray}
		\notag Q_b(v)&\gtrsim& \int_{b\Omega}\left\{\sum_{1\leq j\leq n}\left|\frac{\partial v}{\partial \overline{z}_j}\right|^2 - 2\delta\left|\frac{\partial v}{\partial t}\right|^2\right\}\\
		\label{microlocal_ellipticity} &\gtrsim& \int_{b\Omega}\left\{\frac{|\nabla_{\C^n} v|^2}{4} - 2\delta\left|\frac{\partial v}{\partial t}\right|^2\right\}, 
	\end{eqnarray}
	where $\nabla_{\C^n}$ is the ordinary gradient in $\C^n\equiv\R^{2n}$, and the last inequality is obtained by integration by parts. This elementary estimate expresses the microlocal ellipticity of $\Box_b$ in the complement of a conical neighborhood of the characteristic direction $\mathrm{span}\{\overline L_1,\ldots, \overline L_n\}^\perp\subset T^*b\Omega$. 
\end{rmk}

\subsection{Fourier analysis} 
If $f:b\Omega\rightarrow \C$ is a function, we denote by $\mathcal{F}f:\C^n\times \R\rightarrow \C$ its "vertical Fourier transform", that is, its Fourier transform in the $t$-variable, normalized as follows: \[
\mathcal{F}f(z,\xi) = \frac{1}{\sqrt{2\pi}}\int_{\R} f(z,t)e^{-i\xi t}dt \qquad (z\in \C^n,\, \xi\in \R). 
\] We will not comment on convergence of this and other integrals in what follows, as our functions $f$ will always be smooth and rapidly decaying in $t$. 

Applying Plancherel theorem in the variable $t$, we obtain the identity
\begin{equation}\label{eq:partial_Plancherel}
	\int_{b\Omega}f\overline{g}=\int_{\C^n\times \R} \mathcal{F}f\overline{\mathcal{F}g} 
\end{equation}
for $f,g:b\Omega\rightarrow \C$, where the implicit measures are Lebesgue in $(z,t)$ in the LHS and Lebesgue in $(z,\xi)$ in the RHS.

The vertical Fourier transform of a $\C^n$-valued function $v=(v_1,\ldots, v_n)$ is defined componentwise: $\mathcal{F}v(z,\xi):=(\mathcal{F}v_1(z,\xi), \ldots, \mathcal{F}v_n(z,\xi))$. 

We denote by $\mathcal{S}_{\mathrm{vert}}(b\Omega; \C^n)$ the space of smooth functions $v:b\Omega\rightarrow \C^n$ that are compactly supported in $z$ and Schwartz in $t$, the latter property meaning that $|t|^N\left|\frac{\partial^N v}{\partial t^N}\right|$ is bounded for every $N\in \N$. By standard facts of Fourier analysis, the vertical Fourier transform is an isomorphism of $\mathcal{S}_{\mathrm{vert}}(b\Omega; \C^n)$ with inverse $v\mapsto \mathcal{F}v(z,-\xi)$. 

If $w:\C^n\rightarrow \C^n$ is a function, we denote by $\nabla^{0,1}w$ (resp.~$\nabla^{1,0}w$) the $(0,1)$-part (resp.~the $(1,0)$-part) of its gradient, that is, the matrix $\nabla^{0,1}w=\left(\frac{\partial w_k}{\partial \overline z_j}\right)_{1\leq j,k\leq n}$ (resp.~$\nabla^{1,0}w=\left(\frac{\partial w_k}{\partial z_j}\right)_{1\leq j,k\leq n}$). The Hilbert--Schmidt norm of this matrix is denoted $|\nabla^{0,1}w|$ (resp.~$|\nabla^{1,0}w|$), that is, $|\nabla^{0,1}w|^2=\sum_{1\leq j,k\leq n}\left|\frac{\partial w_k}{\partial \overline z_j}\right|^2$ (resp.~$|\nabla^{1,0}w|^2=\sum_{1\leq j,k\leq n}\left|\frac{\partial w_k}{\partial z_j}\right|^2$). 

\begin{prp}[Fourier transformed basic identities]\label{prp:fourier}
	Given $v\in \mathcal{S}_{\mathrm{vert}}(b\Omega; \C^n)$, define the one-parameter families of $\C^n$-valued test functions \begin{eqnarray*}
		w_\xi^+(z)&=&e^{\xi\varphi(z)}\mathcal{F}v(z,\xi)\\
		w_\xi^-(z)&=&e^{\xi\varphi(z)}\mathcal{F}v(z,-\xi), 
	\end{eqnarray*}
	where $\xi\in \R$ and $z\in \C^n$. Then \begin{eqnarray*}
		E_b(v) &=& \int_\R\left\{\int_{\C^n} |\nabla^{0,1} w_\xi^+|^2e^{-2\xi\varphi}+2\xi\int_{\C^n}\left( Hw_\xi^+,w_\xi^+\right) e^{-2\xi\varphi}\right\}d\xi \\
		&=&\int_\R\left\{\int_{\C^n} | \nabla^{1,0}w_\xi^-|^2e^{-2\xi\varphi}+2\xi\int_{\C^n}\left((\mathrm{tr}(H)I_n-H)w_\xi^-,w_\xi^-\right) e^{-2\xi\varphi}\right\}d\xi. 
	\end{eqnarray*} 
\end{prp}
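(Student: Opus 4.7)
The plan is to derive both identities by taking the vertical Fourier transform of the right-hand sides of Proposition \ref{prp:basic_identities} term by term, then applying Plancherel \eqref{eq:partial_Plancherel} and a conjugation-by-$e^{\xi\varphi}$ trick to rewrite the resulting expressions in terms of $w_\xi^{\pm}$. The key algebraic observation is the following. In the boundary coordinates \eqref{eq:coordinates} we have $\overline{L}_j = \partial/\partial\overline{z}_j - i(\partial\varphi/\partial\overline{z}_j)\partial/\partial t$, so under the vertical Fourier transform (which sends $\partial/\partial t$ to multiplication by $i\xi$) one gets
\[
\mathcal{F}(\overline{L}_j v)(z,\xi) = \Bigl(\tfrac{\partial}{\partial\overline{z}_j}+\xi\tfrac{\partial\varphi}{\partial\overline{z}_j}\Bigr)\mathcal{F}v(z,\xi) = e^{-\xi\varphi(z)}\tfrac{\partial}{\partial\overline{z}_j}\bigl(e^{\xi\varphi(z)}\mathcal{F}v(z,\xi)\bigr) = e^{-\xi\varphi(z)}\tfrac{\partial w_\xi^+}{\partial\overline{z}_j}(z).
\]
Squaring and summing over $j$ and the components of $v$ and integrating, Plancherel gives
\[
\sum_{j=1}^n\int_{b\Omega}|\overline{L}_j v|^2 = \int_\R\int_{\C^n}|\nabla^{0,1}w_\xi^+|^2\,e^{-2\xi\varphi}\,dz\,d\xi.
\]

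For the curvature term in \eqref{identity_+}, I use the fact that $H$ depends only on $z$, so under the Fourier transform $\partial v/\partial t\mapsto i\xi\mathcal{F}v$, and Plancherel gives
\[
-2i\int_{b\Omega}\Bigl(H\tfrac{\partial v}{\partial t},v\Bigr) = 2\int_\R \xi\int_{\C^n}(H\mathcal{F}v,\mathcal{F}v)\,dz\,d\xi = 2\int_\R\xi\int_{\C^n}(Hw_\xi^+,w_\xi^+)\,e^{-2\xi\varphi}\,dz\,d\xi,
\]
because $|\mathcal{F}v(z,\xi)|^2 = |w_\xi^+(z)|^2 e^{-2\xi\varphi(z)}$ and the same weighting applies to the Hermitian pairing. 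Adding the two contributions yields the first identity.

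The second identity is obtained analogously, but starting from \eqref{identity_-} and expressing things in terms of $L_j$ rather than $\overline{L}_j$. The analogous computation gives
\[
\mathcal{F}(L_j v)(z,\xi) = \Bigl(\tfrac{\partial}{\partial z_j}-\xi\tfrac{\partial\varphi}{\partial z_j}\Bigr)\mathcal{F}v(z,\xi),
\]
which does not match the $e^{\xi\varphi}$-conjugation pattern at the frequency $\xi$. The fix is to change variable $\xi\mapsto -\xi$ in the $\xi$-integral provided by Plancherel; after this substitution the operator becomes $\partial/\partial z_j + \xi(\partial\varphi/\partial z_j)$, which is exactly $e^{-\xi\varphi}\partial/\partial z_j\circ e^{\xi\varphi}$, and one recognizes the function $w_\xi^-$ in the result. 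The curvature term is handled in the same way, with the sign change $\xi\mapsto -\xi$ producing the plus sign in front of the $\bigl(\mathrm{tr}(H)I_n-H\bigr)$ contribution.

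The only nonroutine point is bookkeeping the signs and the variable change $\xi\mapsto -\xi$ in the derivation of the second identity; everything else is a direct application of Plancherel combined with the identity $e^{-\xi\varphi}\partial/\partial\overline{z}_j\circ e^{\xi\varphi} = \partial/\partial\overline{z}_j + \xi(\partial\varphi/\partial\overline{z}_j)$ (and its $(1,0)$ analogue). Hypothesis $v\in\mathcal{S}_{\mathrm{vert}}(b\Omega;\C^n)$ guarantees that all Fourier integrals and integrations by parts in the $t$-variable are absolutely convergent, so no further justification is needed.
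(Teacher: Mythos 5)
Your proposal is correct and follows essentially the same route as the paper: conjugate $L_j$ and $\overline{L}_j$ through the vertical Fourier transform to recognize $e^{\mp\xi\varphi}\partial e^{\pm\xi\varphi}$, apply Plancherel to each term of Proposition \ref{prp:basic_identities}, and perform the change of variable $\xi\mapsto-\xi$ to produce $w_\xi^-$ and fix the sign of the curvature term in the second identity.
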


\begin{proof}
	The operator $-i\frac{\partial}{\partial t}$ and multiplication by $\xi$ are conjugated under $\mathcal{F}$, that is, \[
	\mathcal{F}\left(-i\frac{\partial f}{\partial t}\right)(z,\xi) = \xi\mathcal{F}f(z,\xi). 
	\]
	Hence, $L_j=\frac{\partial}{\partial z_j}+i\frac{\partial \varphi}{\partial z_j}\frac{\partial}{\partial t}$ is conjugated to \[\frac{\partial}{\partial z_j}-\xi\frac{\partial \varphi}{\partial z_j}=e^{\xi\varphi}\circ \frac{\partial}{\partial z_j}\circ e^{-\xi\varphi}.\] Similarly, $\overline{L}_j=\frac{\partial}{\partial \overline{z}_j}-i\frac{\partial \varphi}{\partial \overline{z}_j}\frac{\partial}{\partial t}$ is conjugated to \[\frac{\partial}{\partial \overline{z}_j}+\xi\frac{\partial \varphi}{\partial \overline{z}_j}=e^{-\xi\varphi}\circ \frac{\partial}{\partial \overline{z}_j}\circ e^{\xi\varphi}.\]
	
	By \eqref{identity_+} and Plancherel's identity \eqref{eq:partial_Plancherel},  
	\begin{eqnarray*}
		E_b(v)&=&\sum_{1\leq j\leq n}\int_{b\Omega} |\overline{L}_jv|^2-2i\int_{b\Omega} \left( H\frac{\partial v}{\partial t},v\right)\\
		&=&\int_\R\left\{\sum_{1\leq j\leq n}\int_{\C^n} \left|\frac{\partial(e^{\xi\varphi}\mathcal{F}v)}{\partial \overline{z}_j}\right|^2e^{-2\xi\varphi}+2\xi\int_{\C^n}\left( He^{\xi\varphi}\mathcal{F}v,e^{\xi\varphi}\mathcal{F}v\right) e^{-2\xi\varphi}\right\}d\xi.
	\end{eqnarray*} 
	This is the first stated identity. Similarly, we exploit the second basic identity \eqref{identity_-} to write	\begin{eqnarray*}
		E_b(v)&=&\sum_{1\leq j\leq n}\int_{b\Omega} |L_jv|^2+2i\int_{b\Omega} \left( (\mathrm{tr}(H)I_n-H)\frac{\partial v}{\partial t},v\right)\\
		&=&\int_\R\left\{\sum_{1\leq j\leq n}\int_{\C^n} \left|\frac{\partial(e^{-\xi\varphi}\mathcal{F}v)}{\partial z_j}\right|^2e^{2\xi\varphi}-2\xi\int_{\C^n}\left((\mathrm{tr}(H)I_n-H)e^{-\xi\varphi}\mathcal{F}v,e^{-\xi\varphi}\mathcal{F}v\right) e^{2\xi\varphi}\right\}d\xi.
	\end{eqnarray*} 
	Changing variables $\xi\leftrightarrow -\xi$, we obtain the second desired identity. 
\end{proof}

\subsection{Proof of Proposition \ref{prp:spectral_gap}}

Suppose that $v\in\mathcal{S}_{\mathrm{vert}}(b\Omega;\C^n)$ is vertically Fourier-supported on $\{\xi\geq 0\}$, that is, \[
\mathcal{F}v(z,\xi) = 0\quad \forall z\in\C^n \text{ and }\xi\leq 0. 
\]	
Then the first identity of Proposition \ref{prp:fourier} plus the trivial lower bound $H\geq \lambda_1I_n$ (as quadratic forms) gives \begin{eqnarray}
	\notag	E_b(v) &=& \int_0^{+\infty}\left\{\int_{\C^n} |\nabla^{0,1} w_\xi^+|^2e^{-2\xi\varphi}+2\xi\int_{\C^n}\left( Hw_\xi^+,w_\xi^+\right) e^{-2\xi\varphi}\right\}d\xi\\
	\label{eq:positive_support}	&\geq& \int_0^{+\infty}\left\{\int_{\C^n} |\nabla^{0,1} w_\xi^+|^2e^{-2\xi\varphi}+2\xi\int_{\C^n}\lambda_1|w_\xi^+|^2 e^{-2\xi\varphi}\right\}d\xi. 
\end{eqnarray}	

If instead $v\in\mathcal{S}_{\mathrm{vert}}(b\Omega;\C^n)$ is vertically Fourier-supported on $\{\xi\leq 0\}$, that is, \[
\mathcal{F}v(z,\xi) = 0\quad \forall z\in\C^n \text{ and }\xi\geq 0,  
\]	
then \begin{eqnarray}
	\notag	E_b(v) &=& \int_0^{+\infty}\left\{\int_{\C^n} |\nabla^{1,0} w_\xi^-|^2e^{-2\xi\varphi}+2\xi\int_{\C^n}\left((\mathrm{tr}(H)I_n-H)w_\xi^-,w_\xi^-\right) e^{-2\xi\varphi}\right\}d\xi\\
	\label{eq:negative_support}	&\geq& \int_0^{+\infty}\left\{\int_{\C^n} |\nabla^{1,0} w_\xi^-|^2e^{-2\xi\varphi}+2\xi\int_{\C^n}\lambda_1|w_\xi^-|^2 e^{-2\xi\varphi}\right\}d\xi, 
\end{eqnarray}
because the eigenvalues of the Hermitian matrix $\mathrm{tr}(H)I_n-H$ are \[\lambda_1+\ldots+\lambda_n-\lambda_j\qquad (1\leq j\leq n),\] and in particular they are all larger or equal to $\lambda_1+\ldots+\lambda_{n-1}\geq \lambda_1$.

Since $|\nabla^{1,0}w|=|\nabla^{0,1} \overline{w}|$, the lower bounds \eqref{eq:positive_support} and \eqref{eq:negative_support} can be rewritten as follows:  \begin{equation}\label{eq:positive_support_2}
	E_b(v) \geq \int_0^{+\infty}\E^{\xi \varphi}\left(w_\xi^+\right)d\xi\end{equation}
for $v\in \mathcal{S}_{\mathrm{vert}}(b\Omega; \C^n)$ positively Fourier-supported, and 
\begin{equation}\label{eq:negative_support_2}
	E_b(v) \geq \int_0^{+\infty}\E^{\xi \varphi}\left(\overline{w_\xi^-}\right)d\xi. 
\end{equation}
for $v\in \mathcal{S}_{\mathrm{vert}}(b\Omega; \C^n)$ negatively Fourier-supported. Recall that $w_\xi^+, w_\xi^-$ are defined as in Proposition \ref{prp:fourier} and that $\E^\varphi$ is defined on vector-valued functions by \eqref{eq:energy_form_vector}.
	
	By Proposition \ref{prp:reduction_boundary}, our goal is to prove that if $v=(v_1,\ldots, v_n)\in C^\infty_c(b\Omega; \C^n)$ is supported on a small neighborhood $V$ of the origin, then \begin{equation}\label{goal_subellipticity}
		Q_b(v) \gtrsim \lVert\Lambda^s v\rVert_{L^2}^2, 
	\end{equation}
	where $\Lambda^s$ is the standard pseudodifferential operator with symbol $(1+|\tau|^2)^{\frac{s}{2}}$. Here $\tau\in \R^{2n+1}$ is the dual, or cotangent, variable on $\C^n\times\R\simeq\R^{2n+1}$, and in particular $\tau_{2n+1}$ is dual to $t$. From now on, every norm will be an $L^2$ norm and we will omit subscripts. 
	
	Fix a test function $\chi_0(\tau)$ identically equal to one on a large ball centered at $0\in\R^{2n+1}$. Next, consider the covering of the unit sphere $\sfera^{2n} = \{\tau\in \R^{2n+1}\colon\ |\tau|=1\}$ given by the upper hemisphere $\Omega_+=\{\tau\in \sfera^{2n}\colon\ \tau_{2n+1}>0\}$, the lower hemisphere $\Omega_-\{\tau\in \sfera^{2n}\colon\ \tau_{2n+1}<0\}$, and the equatorial strip $\Omega_{\mathrm{eq}}=\{\tau\in \sfera^{2n}\colon\ |\tau_{2n+1}|<\delta\}$, where $\delta$ is a small positive parameter. We let $\{\sigma_+, \sigma_-, \sigma_{\mathrm{eq}}\}\subset C^\infty(\sfera^{2n})$ be a partition of unity subordinate to this covering. Finally, set \[
	\chi_\bullet(\tau) = (1-\chi_0(\tau))\sigma_\bullet\left(\frac{\tau}{|\tau|}\right) \qquad (\bullet\in \{\mathrm{eq}, +, -\}). 
	\]
	Denote by $P_\bullet$ the Fourier multiplier operator with symbol $\chi_\bullet(\tau)$. We let operators act componentwise on vector-valued functions. In order to prove inequality \eqref{goal_subellipticity}, it is enough to establish the four bounds \begin{equation}\label{microlocal_bounds}
		Q_b(v) \gtrsim \lVert\Lambda^s P_\bullet v\rVert^2\qquad (\bullet\in \{0, \mathrm{eq}, +,-\})
	\end{equation}
	for every $v\in C^\infty_c(V; \C^n)$. 
	
	The case $\bullet=0$ of \eqref{microlocal_bounds} is trivial, and the case $\bullet=\mathrm{eq}$ is a standard consequence of microlocal ellipticity in the equatorial conical region where $\chi_{\mathrm{eq}}$ is supported. See Remark \ref{rmk:microlocal_ellipticity} (recall that $s\leq 1$).
	
	We are left with the nonelliptic cases $\bullet=+$ and $\bullet=-$. This is where the key assumption \eqref{spectral_gap} is used. If $\overline{V}$ is contained in $\{|z|<C^{-1}\}\times \R$, and we choose a test function $\rho'\in C^\infty_c(\C^n)$ supported on $\{|z|<C^{-1}\}$ and identically one on $V$ (when thought of as a function on $b\Omega\simeq\C^n\times \R$), then 
	\[\lVert\Lambda^sP_+v\rVert^2\lesssim \left\lVert \left|\frac{\partial}{\partial t}\right|^sP_+v\right\rVert^2\lesssim\left\lVert \left|\frac{\partial}{\partial t}\right|^s\left(\rho'(z)P_+v\right)\right\rVert^2+\lVert v\rVert^2,\]
	where we used the fact that $(1+|\tau|^2)^{\frac{s}{2}}\lesssim |\tau_{2n+1}|^s$ on the support of $(1-\chi_0(\tau))\sigma_+\left(\frac{\tau}{|\tau|}\right)$ for the first bound, and the fact that $(1-\rho'(z))P_+$ is a smoothing operator for the second (recall that $v$ is supported on $V$). Here $\left|\frac{\partial}{\partial t}\right|^s$ denotes the operator with symbol $|\tau_{2n+1}|^s$. 
	
	Applying the partial Plancherel identity \eqref{eq:partial_Plancherel}, we find \begin{eqnarray}
		\notag \left\lVert \left|\frac{\partial}{\partial t}\right|^s\left(\rho'P_+v\right)\right\rVert^2&=&\int_{\R}|\xi|^{2s}\left\{\int_{\C^n}|\mathcal{F}\left(\rho'P_+v\right)|^2\right\}d\xi\\
		\label{partial_Plancherel2}&=&\int_{\R}|\xi|^{2s}\left\{\int_{\C^n}|\mathcal{F}\left(\rho'P_+v\right)e^{\xi\varphi}|^2e^{-2\xi\varphi}\right\}d\xi \end{eqnarray}
	Notice that since vertical Fourier transform and multiplication by $\rho'$ commute (because $\rho'$ is independent of $t$), the vertical Fourier support of $\rho'(z)P_+v(z,t)$ is contained in $\{\xi\geq C\}$ (if the ball where the cut-off function $\chi_0$ is identically one is chosen large enough) and $\mathcal{F}\left(\rho'P_+v\right)e^{\xi\varphi}$ is supported on $\{|z|<C^{-1}\}$ for every $\xi$. Hence, by \eqref{spectral_gap} and inequality \eqref{eq:positive_support_2} \begin{eqnarray*}
		\left\lVert \left|\frac{\partial}{\partial t}\right|^s\left(\rho'P_+u\right)\right\rVert^2&\lesssim& \int_C^{+\infty} \E^{\xi\varphi}\left(\mathcal{F}\left(\rho'P_+v\right)e^{\xi\varphi}\right)d\xi\\
		&\leq & E_b(\rho'P_+v)\\
		&\lesssim & Q_b(v), 
	\end{eqnarray*}
	where the last step is a standard commutation argument in pseudodifferential calculus. 
	This completes the proof of the microlocal bound \eqref{microlocal_bounds} with $\bullet=+$. The case $\bullet=-$ is entirely analogous. In particular, the reader should not worry about the conjugation appearing in \eqref{eq:negative_support_2}, since a valid replacement of \eqref{partial_Plancherel2} in the $\bullet=-$ case is \[
	\left|\left| \left|\frac{\partial}{\partial t}\right|^s\left(\rho'P_-u\right)\right|\right|^2 = \int_{C}^{+\infty}\xi^{2s}\left\{\int_{\C^n}|\overline{\mathcal{F}\left(\rho'P_-u\right)(z,-\xi)e^{\xi\varphi}}|^2e^{-2\xi\varphi}\right\}d\xi. 
	\]

\section{An uncertainty principle for the $\dbar$ operator}\label{sec:dbar_up}

The goal of this section is to prove a \emph{local lower bound} for the energy form $\E^\varphi$ \emph{in the one-dimensional case}, when $\Delta\varphi$ is comparable to the modulus squared of a complex polynomial. 

\begin{prp}[Main one-dimensional estimate]\label{lem:main_1d_estimate} 
	Let $P(\zeta)$ be a polynomial in one complex variable of degree $d$. Assume that all the roots of $P$ are in the disc $D\left(0,\frac{1}{2}\right)$ and that the leading coefficient of $P$ has modulus $A\geq C_d$, a positive constant depending only on the degree. 
	
	Let $\varphi\in C^2(\overline{\mathbb{D}}; \R)$ and $B>0$ be such that 
	\begin{equation}\label{main_1d_comparability}B^{-1}|P(\zeta)|^2\leq \Delta \varphi(\zeta)\leq B|P(\zeta)|^2 \qquad \forall\zeta\in \mathbb{D}.\end{equation}
	Then \begin{equation}\label{main_1d_estimate}
		\int_{\mathbb{D}}\left|\frac{\partial w}{\partial \overline{z}}\right|^2e^{-2\varphi}+\int_{\mathbb{D}}\Delta \varphi|w|^2e^{-2\varphi}\gtrsim_{d, B} A^{\frac{2}{d+1}}\int_{\mathbb{D}}|w|^2e^{-2\varphi}
	\end{equation}
	holds for all $w\in C^1(\overline{\mathbb{D}})$. 
\end{prp}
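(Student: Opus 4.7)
The plan is to dichotomize $\overline{\mathbb{D}}$ according to the size of $|P|$ and balance the two terms on the left-hand side of \eqref{main_1d_estimate}. Where $|P|$ is large, the mass term $\int \Delta\varphi|w|^2 e^{-2\varphi}$ directly controls $\int |w|^2 e^{-2\varphi}$ thanks to \eqref{main_1d_comparability}; where $|P|$ is small, the integral is concentrated in small neighborhoods of the zeros of $P$, and the Dirichlet term $\int |\partial_{\bar z} w|^2 e^{-2\varphi}$ must be exploited via the $\bar\partial$-uncertainty principle (Lemma \ref{lem:dbar_unc}). Optimizing the threshold between these two regimes pinpoints the critical exponent $A^{2/(d+1)}$.

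More precisely, I would fix a threshold $\epsilon>0$ and write $P(\zeta)=a\prod_{j=1}^{d}(\zeta-\alpha_j)$ with $|a|=A$ and $\alpha_j\in D(0,1/2)$. The arithmetic-geometric mean inequality (the content of Lemma \ref{lem:sublevel}) gives
$$\{|P|<\epsilon\}\subseteq E := \bigcup_{j=1}^{d} D(\alpha_j,r), \qquad r := (\epsilon/A)^{1/d};$$
on the good set $G:=\overline{\mathbb{D}}\setminus E$ we have $|P|\geq \epsilon$ by construction. Choosing $A\geq C_d$ large enough to ensure $r\leq 1/4$, each $D(\alpha_j,2r)$ lies in $\mathbb{D}$. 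From \eqref{main_1d_comparability}, $\Delta\varphi\geq B^{-1}\epsilon^2$ on $G$, so
$$\int_G |w|^2 e^{-2\varphi} \leq B\epsilon^{-2}\int_{\mathbb{D}}\Delta\varphi|w|^2e^{-2\varphi}.$$
On each disc $D(\alpha_j,r)$, I would invoke Lemma \ref{lem:dbar_unc} via a standard cutoff argument to obtain a localized weighted estimate of the form
$$\int_{D(\alpha_j,r)}|w|^2e^{-2\varphi} \lesssim_{d,B} r^{2}\int_{D(\alpha_j,2r)}\bigl(|\partial_{\bar z}w|^2+\Delta\varphi\,|w|^2\bigr)e^{-2\varphi}.$$
Summing over the at most $d$ discs and combining with the $G$-bound,
$$\int_{\mathbb{D}}|w|^2e^{-2\varphi}\lesssim_{d,B}\bigl(\epsilon^{-2}+r^2\bigr)\cdot\bigl(\text{LHS of \eqref{main_1d_estimate}}\bigr).$$
Balancing $\epsilon^{-2}=r^2=(\epsilon/A)^{2/d}$ yields $\epsilon=A^{1/(d+1)}$, at which both contributions equal $A^{-2/(d+1)}$, giving the desired estimate.

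The step that I expect to be the main obstacle is the local weighted estimate on $D(\alpha_j,r)$. A naive cutoff $\chi$ supported in $D(\alpha_j,2r)$ and equal to $1$ on $D(\alpha_j,r)$ produces, via $|\partial_{\bar z}\chi|^2\lesssim r^{-2}$, an error term of size $r^{-2}\int_{D(\alpha_j,2r)\setminus D(\alpha_j,r)}|w|^2e^{-2\varphi}$ that must be reabsorbed into the LHS of \eqref{main_1d_estimate}. Furthermore, the weight $e^{-2\varphi}$ can have non-trivial oscillation on $D(\alpha_j,2r)$ (where $\Delta\varphi$ can be as large as $\lesssim_{d,B}A^2$), and one must show that the implicit constant in the local estimate depends only on $d$ and $B$, not on $A$. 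This is precisely the role of the $\bar\partial$-uncertainty principle of \cite{dallara}, which is engineered to yield scale-invariant weighted estimates of this type, as opposed to a classical Poincar\'e inequality.
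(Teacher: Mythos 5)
Your high-level plan is the right one and matches the spirit of the paper's argument: decompose $\overline{\mathbb{D}}$ according to the size of $|P|$, let the mass term handle the region where $|P|$ is large, invoke the $\bar\partial$-uncertainty principle near the roots, and balance the two regimes to extract $A^{2/(d+1)}$. However, there is a genuine gap in the local estimate near the roots, and you have not quite located it.

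The covering $E=\bigcup_{j} D(\alpha_j,r)$ with a \emph{single} radius $r=(\epsilon/A)^{1/d}$ gives you the correct lower bound $|P|\geq\epsilon$ on the complement, but it gives you \emph{no upper bound} for $|P|$ on the discs. This matters: to run the weighted $\bar\partial$-UP (Lemma~\ref{lem:dbar_unc_metric}), or equivalently to "flatten" the weight by writing $\varphi=\re(G)+O(1)$ (Lemma~\ref{lem:well_known}), you need $\Delta\varphi\lesssim_{d,B} r^{-2}$ on $D(\alpha_j,2r)$, i.e.\ $|P|\lesssim_{d} r^{-1}$ there. When the roots are clustered (e.g.\ $P=A\zeta^d$) this holds with your $r=A^{-1/(d+1)}$, since then $|P|\lesssim_d A r^d = A^{1/(d+1)}=r^{-1}$. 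But when the roots are well-separated (say at mutual distance $\sim 1$), on $D(\alpha_j,2r)$ one has $|P(z)|\sim A|z-\alpha_j|$, which is as large as $\sim A r = A^{d/(d+1)}\gg r^{-1}$ for $d\geq 2$. Then $\Delta\varphi\sim A^{2d/(d+1)}\gg r^{-2}$ near the boundary of $D(\alpha_j,2r)$, the curvature bound fails uniformly in $A$, and the weight $e^{-2\varphi}$ has uncontrollable oscillation across the disc $D(\alpha_j,2r)$. No cutoff argument repairs this: the obstruction is intrinsic to the choice of a scale $r$ that does not match the local Lipschitz constant of $P$. (Your annulus error $r^{-2}\int_{\mathrm{annulus}}|w|^2 e^{-2\varphi}$ is indeed absorbable by the mass term, but the Poincar\'e step needs $\varphi$ nearly harmonic on the disc, and that is what breaks.)

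You also misattribute the inclusion $\{|P|<\epsilon\}\subseteq\bigcup_j D(\alpha_j,(\epsilon/A)^{1/d})$ to the Sublevel Set Lemma. That inclusion is the trivial observation $|P(\zeta)|\geq A\,\mathrm{dist}(\zeta,\{\alpha_j\})^d$, and by itself it is not strong enough. The actual content of Lemma~\ref{lem:sublevel} is the Clustering Lemma applied recursively: it produces, for each cluster of roots, a \emph{cluster-dependent} radius $r_\zeta$ satisfying \emph{both} $r_\zeta\lesssim_d A^{-1/(d+1)}$ \emph{and} the upper bound $|P|\lesssim_d r_\zeta^{-1}$ on $D(\zeta,4r_\zeta)$, together with the complementary lower bound $|P|\gtrsim_d A^{1/(d+1)}$ off $\bigcup D(\zeta,2r_\zeta)$. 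Property (2), the upper bound, is exactly what certifies the bounded-curvature hypothesis of the weighted $\bar\partial$-UP at scale $r_\zeta$, and it is precisely what a fixed radius cannot deliver when the roots scatter. In the separated example above, the Sublevel Set Lemma chooses $r_\zeta\sim A^{-1/2}\ll A^{-1/(d+1)}$ (the radius at which $|P|\sim r_\zeta^{-1}$), the weighted $\bar\partial$-UP gives an even stronger local gain $\gtrsim A$ there, and the annulus $A^{-1/2}\lesssim |z-\alpha_j|\lesssim A^{-1/(d+1)}$ is recovered by the mass term. Your argument becomes correct once you replace the single-scale covering by the cluster-adapted radii of Lemma~\ref{lem:sublevel}; without that replacement, it fails for generic root configurations with $d\geq 2$.
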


Inequality \eqref{main_1d_estimate} is sharp, as far as the dependence on $A$ is concerned. This may be seen considering $\varphi(\zeta)=A^2|\zeta|^{2d+2}$, in which case $\Delta\varphi(\zeta)\simeq_d |P|^2$, where $P(\zeta)=A\zeta^d$. The LHS of \eqref{main_1d_estimate}, evaluated on a test function $w$ supported on $\{\epsilon\leq |\zeta|\leq 2\epsilon\}$ and such that $\lVert w\rVert_{L^\infty}\lesssim 1$ and $\lVert \nabla w\rVert_{L^\infty}\lesssim \epsilon^{-1}$, is $O(1+A^2\epsilon^{2d+2})$, while $\int |w|^2e^{-2\varphi}\gtrsim \epsilon^2$, at least if $A^2\epsilon^{2d+2}\lesssim 1$. Optimising in $\epsilon$ in this range, we obtain $\frac{(\mathrm{LHS})}{\int |w|^2e^{-2\varphi}}\lesssim_d A^{\frac{2}{d+1}}$. \newline 
Notice that the hypothesis of the proposition is that $\Delta \varphi$ is comparable to the modulus squared of a perturbation of the polynomial $A\zeta^d$, so one should think of Proposition \ref{lem:main_1d_estimate} as a stability property of the "local spectral gap" of $\E^\varphi$. \newline 
We also remark that the proof for the special case $\Delta\varphi\simeq A^2|\zeta|^{2d}$ is significantly simpler, e.g., the clustering argument of Section \ref{sec:sublevel} is not needed. 

The proof of Proposition \ref{lem:main_1d_estimate} is given in Section \ref{sec:proof_main_one_dim}, after a number of preliminary facts are established. 

\subsection{A basic $\dbar$-uncertainty principle}
We begin with the following basic formulation of a $\dbar$-uncertainty principle ($\dbar$-UP), appearing in  \cite{dallara} as Lemma 12. We reproduce the proof (actually, a slight simplification thereof). 

\begin{lem}[Basic $\dbar$-UP]\label{lem:dbar_unc} Let $V:D(z,r)\rightarrow[0,+\infty)$ be a measurable function and define \[
	c:=\inf_{z'\in D(z,r)\setminus D\left(z,\frac{r}{2}\right)}V(z'). \]
	If $w\in C^1(\overline{D(z,r)})$, then \begin{equation}\label{dbar_unc}
		\int_{D(z,r)}\left|\frac{\partial w}{\partial \overline{z}}\right|^2+\int_{D(z,r)}V|w|^2\gtrsim \min\left\{c,\frac{1}{r^2}\right\}\int_{D(z,r)}|w|^2.
	\end{equation}
\end{lem}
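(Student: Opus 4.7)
By translating and applying the scaling $\zeta \mapsto z + r\zeta$, which leaves the $\dbar$-term invariant, rescales Lebesgue area by $r^2$, and rescales $V$ by $r^2$ (so that the new outer-annulus infimum is $r^2 c$), I would reduce to proving the unit-disc case
\begin{equation*}
\int_\mathbb{D} \Bigl|\frac{\partial w}{\partial \overline{\zeta}}\Bigr|^2 + \int_\mathbb{D} V|w|^2 \gtrsim \min(c,1) \int_\mathbb{D} |w|^2,
\end{equation*}
where now $c = \inf_{\mathbb{D}\setminus D(0,1/2)} V$.

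The central idea is to split $w = h + u$ on $\mathbb{D}$, where $h$ is holomorphic and $u$ is an $L^2$-solution of $\dbar u = \dbar w$ satisfying $\|u\|_{L^2(\mathbb{D})}^2 \leq K \int_\mathbb{D}|\dbar w|^2$ for a universal $K$ (provided, e.g., by H\"ormander's weighted $L^2$-estimate with weight $|\zeta|^2$, or equivalently by the boundedness of the Cauchy transform on $L^2(\mathbb{D})$). The key input is a \emph{sharp} subharmonic comparison: since $|h|^2$ is subharmonic, the circular means $M(\rho) = \int_0^{2\pi}|h(\rho e^{i\theta})|^2\,d\theta$ are nondecreasing in $\rho$, and integrating $M(\rho)\rho\,d\rho$ separately over $[0,1/2]$ and $[1/2,1]$ yields the strictly sub-unital inequality
\begin{equation*}
\int_{D(0,1/2)}|h|^2 \leq \frac{1}{3}\int_{\mathbb{D}\setminus D(0,1/2)}|h|^2.
\end{equation*}
Substituting $h = w - u$, applying the triangle inequality, and absorbing the $L^2$-norm of $u$ into the $\dbar$-term gives the intermediate inequality
\begin{equation*}
\beta \leq \frac{4}{3}\alpha + C\int_\mathbb{D}|\dbar w|^2,
\end{equation*}
where $\alpha := \int_{\mathbb{D}\setminus D(0,1/2)}|w|^2$ and $\beta := \int_{D(0,1/2)}|w|^2$.

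To conclude I would dichotomize on the ratio of $\alpha$ and $\beta$. If $\alpha \geq \beta/2$, then $\int V|w|^2 \geq c\alpha \gtrsim c(\alpha+\beta)$. If instead $\alpha < \beta/2$, the intermediate inequality forces $\beta/3 \leq C\int|\dbar w|^2$, hence $\alpha + \beta \lesssim \int|\dbar w|^2$ without any factor of $c$. Either way, $\int|\dbar w|^2 + \int V|w|^2 \gtrsim \min(c,1)(\alpha+\beta)$, which is the desired bound. The main obstacle---indeed the reason the argument closes at all---is securing the strictly sub-unital constant $1/3$ in the subharmonic comparison: a softer approach based on multiplicative cutoffs would only deliver constants $\geq 1$ on the right, and the final absorption would fail. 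Everything else is routine bookkeeping.
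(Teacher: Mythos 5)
Your proof is correct and follows essentially the same route as the paper's: decompose $w$ into a holomorphic part $h$ and a remainder $u$ with $\lVert u\rVert_{L^2}^2\lesssim\int|\dbar w|^2$, then exploit the concentration of holomorphic $L^2$-mass in the outer annulus. (The paper does this via the Bergman projection and a proof by contradiction; your direct decomposition plus dichotomy is equivalent bookkeeping.) One claim in the write-up is off, however: the strictly sub-unital constant $\frac{1}{3}$ is \emph{not} "the reason the argument closes at all". Any finite constant $C$ in $\int_{D(0,1/2)}|h|^2\leq C\int_{\mathbb{D}\setminus D(0,1/2)}|h|^2$ suffices. Tracing through your triangle inequalities with a general $C$ gives $\beta\leq 4C\alpha+C'\int|\dbar w|^2$, and setting the dichotomy threshold at $\alpha<\beta/(8C)$ instead of $\beta/2$ delivers $\beta/2\leq C'\int|\dbar w|^2$ in the second case, exactly as before, at the cost only of worse implied constants. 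Indeed, the paper's own proof invokes only the soft bound $\int_{\mathbb{D}}|h|^2\lesssim\int_{\mathbb{D}\setminus\frac{1}{2}\mathbb{D}}|h|^2$ and never tracks the constant. What genuinely makes the absorption work is that the right-hand side of the intermediate inequality is a multiple of $\alpha$ (outer mass) rather than of $\beta$ (inner mass); this comes for free from any version of the mean-value or Cauchy estimate and requires no sharpness.
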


\begin{proof} It is enough to prove \eqref{dbar_unc} in the case $z=0$ and $r=1$, that is, on the unit disc $\mathbb{D}$. The general case follows by scaling. Moreover, we may clearly assume that $\int_{\mathbb{D}}|w|^2=1$, and that \[
	V(z)=\begin{cases} 
		& 0 \qquad |z|<1/2\\
		& c \qquad 1/2\leq |z|<1
	\end{cases}.
	\]
	
	We use two inequalities. The first one is an elementary consequence of Cauchy integral formula:
	\begin{equation}\label{L2_annulus}
		\int_{\mathbb{D}}|h|^2\lesssim \int_{\mathbb{D}\setminus\frac{1}{2}\mathbb{D}}|h|^2 \qquad \forall h\in \mathcal{O}(\mathbb{D})\cap L^2(\mathbb{D}). 
	\end{equation}
	
	The second one is a Poincar\'e-type inequality for the operator $\frac{\partial}{\partial \overline{z}}$: \begin{equation}\label{dbar_poincare}
		\int_{\mathbb{D}}|w-Bw|^2\lesssim \int_{\mathbb{D}}\left|\frac{\partial w}{\partial \overline{z}}\right|^2\qquad \forall w\in C^1(\overline{\mathbb{D}}), 
	\end{equation}
	where $B$ is the Bergman projection of the unit disc ${\mathbb{D}}$, that is, the orthogonal projector of $L^2(\mathbb{D})$ onto the subspace of holomorphic functions $\mathcal{O}(\mathbb{D})\cap L^2(\mathbb{D})$. Inequality \eqref{dbar_poincare} is an immediate corollary of the $L^2$-solvability of the $\dbar$-equation on the unit disc (cf.~Theorem 4.3.4 of \cite{chen_shaw}, or Section 5 of \cite{dallara}). 
	
	We argue by contradiction, assuming that $w\in C^1(\overline{\mathbb{D}})$ satisfies $\int_{\mathbb{D}}|w|^2=1$ and 
	\[
	\int_{\mathbb{D}}\left|\frac{\partial w}{\partial \overline{z}}\right|^2+c\int_{\mathbb{D}\setminus \frac{1}{2}\mathbb{D}}|w|^2\leq a\min\left\{c,1\right\}, 
	\]
	where $a>0$ is a small absolute constant. It follows that $\int_{\mathbb{D}\setminus \frac{1}{2}\mathbb{D}}|w|^2\leq a$. Moreover, by \eqref{dbar_poincare}, we have \[\int_{\mathbb{D}}|w-Bw|^2\lesssim a.\]
	Using \eqref{L2_annulus}, we deduce that \begin{eqnarray*}
		\left(\int_{\mathbb{D}}|Bw|^2\right)^{\frac{1}{2}}&\lesssim&	\left(\int_{\mathbb{D}\setminus \frac{1}{2}\mathbb{D}}|Bw|^2\right)^{\frac{1}{2}}\\
		&\leq& \left(\int_{\mathbb{D}\setminus \frac{1}{2}\mathbb{D}}|w-Bw|^2\right)^{\frac{1}{2}} + \left(\int_{\mathbb{D}\setminus \frac{1}{2}\mathbb{D}}|w|^2\right)^{\frac{1}{2}} \\
		&\lesssim&\sqrt{a}
	\end{eqnarray*}
	For $a$ small enough, the inequalities $\lVert Bw\rVert_{L^2(\mathbb{D})}\lesssim \sqrt{a}$ and $\lVert w-Bw\rVert_{L^2(\mathbb{D})}\lesssim \sqrt{a}$ are in contradiction with our normalization $\lVert w\rVert_{L^2(\mathbb{D})}=1$. 
\end{proof}

\subsection{A weighted $\dbar$-UP}

What we actually need in our proof of Proposition \ref{lem:main_1d_estimate} is a generalization of the basic $\dbar$-UP of Lemma \ref{lem:dbar_unc}, where Lebesgue measure is multiplied by a factor $e^{-2\varphi}$, that is, we need lower bounds for the one-dimensional quadratic form  \begin{equation}\label{metric_hol_unc}
	\int_{D(z,r)}\left|\frac{\partial w}{\partial \overline{z}}\right|^2e^{-2\varphi}+\int_{D(z,r)}V|w|^2e^{-2\varphi},
\end{equation}
where $\varphi\in C^2(\overline{D(z,r)})$ satisfies appropriate assumptions.

\begin{rmk}\label{rmk:metric}
	One may interpret functions $w:D(z,r)\rightarrow \C$ as sections of a trivial holomorphic line bundle over the disc (the $\dbar$-operator being well-defined on sections of a holomorphic line bundle). Then the factor $e^{-2\varphi}$ can be interpreted as a metric on this bundle. Holomorphic changes of trivializations transform $\varphi$ into $\varphi + \re(G)$, where $G$ is a holomorphic function. Thus, the relevant properties of $\varphi$ are those that are invariant under such transformations, i.e., expressible in terms of the Laplacian $\Delta \varphi$, which expresses the curvature of the metric $e^{-2\varphi}$.
	
	Notice that the energy \eqref{metric_hol_unc} is stable under bounded perturbations of $\varphi$, and that we know how to bound it in the "flat" case $\varphi=0$ by Lemma \ref{lem:dbar_unc}. These two facts imply that an uncertainty principle holds whenever the metric has bounded curvature. This is the content of Lemma \ref{lem:dbar_unc_metric} below. 
\end{rmk}

\begin{lem}[Weighted $\dbar$-UP]\label{lem:dbar_unc_metric} Let $V:D(z,r)\rightarrow[0,+\infty)$ be a measurable function and define \[
	c:=\inf_{z'\in D(z,r)\setminus D\left(z,\frac{r}{2}\right)}V(z'). \]
	Let $\varphi\in C^2(\overline{D(z,r)}; \R)$ be such that $|\Delta \varphi(\zeta)|\leq Br^{-2}$ for all $\zeta\in D(z,r)$. Then \begin{equation}\label{dbar_unc_metric}
		\int_{D(z,r)}\left|\frac{\partial w}{\partial \overline{z}}\right|^2e^{-2\varphi}+\int_{D(z,r)}V|w|^2e^{-2\varphi}\gtrsim_B \min\left\{c,\frac{1}{r^2}\right\}\int_{D(z,r)}|w|^2e^{-2\varphi}
	\end{equation}
	holds for all $w\in C^1(\overline{D(z,r)})$. 
\end{lem}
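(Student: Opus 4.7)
Following Remark \ref{rmk:metric}, the plan is to absorb the weight $e^{-2\varphi}$ into a holomorphic change of trivialization, so as to reduce the weighted statement to the flat $\dbar$-UP of Lemma \ref{lem:dbar_unc}. A preliminary rescaling $\zeta \mapsto z + r\zeta$ makes both sides of \eqref{dbar_unc_metric} transform in parallel and reduces matters to the model case $D(z,r) = \mathbb{D}$, in which the hypothesis becomes $|\Delta\varphi| \leq B$ on $\mathbb{D}$ and the claim is
\[
\int_{\mathbb{D}}\left|\frac{\partial w}{\partial \overline{z}}\right|^2 e^{-2\varphi} + \int_{\mathbb{D}} V|w|^2 e^{-2\varphi} \gtrsim_B \min\{c,1\}\int_{\mathbb{D}}|w|^2 e^{-2\varphi}.
\]

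The key step is to write $\varphi = \re G + \psi$ on $\mathbb{D}$, with $G\in\mathcal{O}(\mathbb{D})$ and $\|\psi\|_{L^\infty(\mathbb{D})}\lesssim B$. I would take $\psi$ to be the Newtonian potential of $\Delta\varphi$,
\[
\psi(z) := \frac{1}{2\pi}\int_{\mathbb{D}} \log|z-w|\,\Delta\varphi(w)\,dA(w),
\]
which distributionally satisfies $\Delta\psi = \Delta\varphi$ on $\mathbb{D}$, and whose $L^\infty$ bound follows from the uniform integrability of $z\mapsto \int_{\mathbb{D}}|\log|z-w||\,dA(w)$ combined with $|\Delta\varphi|\leq B$. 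Since $\varphi-\psi$ is then real-valued and harmonic on $\mathbb{D}$, it equals $\re G$ for some holomorphic $G$. Setting $\tilde w := w\,e^{-G}$, holomorphy of $G$ gives $\partial_{\bar z}\tilde w = e^{-G}\partial_{\bar z}w$ while $|e^{-G}|^2 = e^{-2(\varphi-\psi)}$, producing the pointwise identities
\[
|\partial_{\bar z}w|^2 e^{-2\varphi} = |\partial_{\bar z}\tilde w|^2 e^{-2\psi}, \qquad |w|^2 e^{-2\varphi} = |\tilde w|^2 e^{-2\psi}.
\]
Because $e^{\pm 2\psi}$ is pinched between positive constants depending only on $B$, the weighted inequality for $w$ reduces, modulo $B$-dependent multiplicative losses, to the unweighted inequality
\[
\int_{\mathbb{D}}|\partial_{\bar z}\tilde w|^2 + \int_{\mathbb{D}}V|\tilde w|^2 \gtrsim \min\{c,1\}\int_{\mathbb{D}}|\tilde w|^2,
\]
which is exactly Lemma \ref{lem:dbar_unc} applied to $\tilde w$.

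The only technical nuisance is that $G$ is a priori holomorphic only on the open disc, whereas Lemma \ref{lem:dbar_unc} is stated for $C^1$-functions up to the boundary. However $\re G = \varphi - \psi$ is bounded on $\mathbb{D}$ (since $\varphi\in C^2(\overline{\mathbb{D}})$ and $\psi$ is bounded), so $|e^{-G}|$ is pinched above and below and $\tilde w$ lies in $W^{1,2}(\mathbb{D})\cap L^\infty(\mathbb{D})$ with $\partial_{\bar z}\tilde w\in L^2(\mathbb{D})$. A standard inspection of the proof of Lemma \ref{lem:dbar_unc} (which only uses the $L^2$ solvability of $\dbar$ on the disc and the $L^2$ annulus inequality for holomorphic functions) shows that its conclusion extends to this class by density, which removes the obstruction. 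The substantive input is thus the construction of the bounded potential $\psi$ -- essentially the only place the curvature bound $|\Delta\varphi|\leq Br^{-2}$ enters -- incarnating the principle highlighted in Remark \ref{rmk:metric} that the energy \eqref{metric_hol_unc} depends on $\varphi$ only through $\Delta\varphi$, modulo bounded perturbations.
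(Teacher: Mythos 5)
Your proposal is correct and follows essentially the same route as the paper: rescale to the unit disc, subtract the Newtonian potential of $\Delta\varphi$ (cut off to $\mathbb{D}$) to get a bounded $\psi$ and a holomorphic $G$ with $\varphi = \re G + \psi$, then apply Lemma \ref{lem:dbar_unc} to $w e^{-G}$ and absorb $e^{\pm 2\psi}$ into $B$-dependent constants — this is precisely the paper's proof combined with its Appendix \ref{sec:appendix_A}. The regularity concern you raise at the end is in fact moot here: since $\varphi\in C^2(\overline{\mathbb{D}})$ and the Newtonian potential of an $L^\infty$ density is $C^{1,\alpha}$, the harmonic function $\re G=\varphi-\psi$ is $C^{1,\alpha}(\overline{\mathbb{D}})$, hence so is its conjugate and $G$ itself, so $we^{-G}\in C^1(\overline{\mathbb{D}})$ directly.
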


\begin{proof} Under the hypothesis of the Lemma, $\varphi=\re\,  G+O(B)$, where $G$ is holomorphic. A proof of this known fact is provided in Appendix \ref{sec:appendix_A}. Applying Lemma \ref{lem:dbar_unc} to $w e^{-G}$, we get \[
	\int_{D(z,r)}\left|\frac{\partial w}{\partial \overline{z}}\right|^2e^{-2\re\, G}+\int_{D(z,r)}V|w|^2e^{-2\re\, G}\gtrsim \min\left\{c,\frac{1}{r^2}\right\}\int_{D(z,r)}|w|^2e^{-2\re\, G}, 
	\] which immediately yields the thesis. 
\end{proof}

\subsection{A Sublevel Set Lemma}\label{sec:sublevel}

In order to apply the weighted $\dbar$-UP of Lemma \ref{lem:dbar_unc_metric} to $V=|P|^2$, where $P$ is a complex polynomial, we need some quantitative control on the sublevel sets of $P$. This is the content of Lemma \ref{lem:sublevel} below, which in turn exploits the following elementary lemma. 

\begin{lem}[Clustering Lemma]
	Let $(X,d)$ be a finite metric space of cardinality $N$ and let $L>0$. There there exists a subset $S\subseteq X$ and a collection of radii $\{r_s\}_{s\in S}$ satisfying the following: \begin{enumerate}
		\item $L\leq r_s\leq 4^{N-1}L$, 
		\item if $s,s'\in S$ are distinct, then $d(s,s')> 2r_s+2r_{s'}$, 
		\item for every $x\in X$ there is a unique $s\in S$ such that $d(x,s)\leq r_s$. 
	\end{enumerate}
\end{lem}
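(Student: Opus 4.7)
My plan is to construct $S$ and the radii by a greedy merging procedure, maintaining at every stage an auxiliary assignment $x\mapsto s_x\in S$ satisfying the covering invariant $d(x,s_x)\leq r_{s_x}$. Initialize $S:=X$, $r_x:=L$ for every $x$, and $s_x:=x$; the invariant is then tautological. Iterate the following merge step: while there exist distinct $s,s'\in S$ violating the separation condition, i.e.\ with $d(s,s')\leq 2(r_s+r_{s'})$, pick such a pair, say with $r_s\geq r_{s'}$, delete $s'$ from $S$, multiply $r_s$ by the growth factor $4$, and reset $s_x:=s$ for every $x$ previously having $s_x=s'$. Stop when no violating pair remains.

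Since $|S|$ strictly decreases at each merge, the loop halts in at most $N-1$ iterations, and every final radius is of the form $L\cdot 4^k$ with $0\leq k\leq N-1$, giving (1). Property (2) is the stopping condition. Uniqueness in (3) is then immediate: if some $x$ satisfied $d(x,s)\leq r_s$ and $d(x,s')\leq r_{s'}$ for distinct $s,s'\in S$, the triangle inequality would force $d(s,s')\leq r_s+r_{s'}<2(r_s+r_{s'})$, contradicting (2). Existence in (3) is precisely the covering invariant.

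The main obstacle is to check that the covering invariant survives each merge. For $x$ with $s_x=s'$ just before the merge, the triangle inequality combined with the merge criterion yields
\[
d(x,s)\leq d(x,s')+d(s',s)\leq r_{s'}+2(r_s+r_{s'})=2r_s+3r_{s'},
\]
which must be compared against the post-merge radius $4r_s$. The fact that radii live on the discrete scale $\{L\cdot 4^k\}$ means that whenever $r_{s'}<r_s$ one automatically has $r_{s'}\leq r_s/4$, so $2r_s+3r_{s'}\leq (11/4)r_s<4r_s$ and the bound is comfortable; the delicate case is the equal-radii one $r_{s'}=r_s$, and handling it (by an appropriate tie-breaking rule, or by enforcing throughout a slightly strengthened invariant $d(x,s_x)\leq \alpha r_{s_x}$ with $\alpha<1$) is the technical heart of the argument. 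Once this is in place, the three properties follow from the considerations above.
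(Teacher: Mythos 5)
Your bottom-up merging scheme is a genuinely different approach from the paper's, which instead uses a pigeonhole argument to fix a radius $r_x\in\{L,4L,\dots,4^{N-1}L\}$ for \emph{every} $x\in X$ (chosen so that the annulus $\{y:\,r_x<d(x,y)\le 4r_x\}$ contains no point of $X$), and then selects $S$ by a greedy pass in decreasing order of $r_x$, discarding the ball of radius $r_s$ around each selected $s$. In the paper's construction the separation condition falls out essentially for free: if $s$ was chosen before $s'$, then $r_s\ge r_{s'}$, the point $s'$ was not discarded so $d(s,s')>r_s$, and the empty annulus gives $d(s,s')>4r_s\ge 2r_s+2r_{s'}$. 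There is no dynamical invariant to maintain.

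The gap in your argument is exactly the equal-radii case, and it is not a removable technicality. Concretely, suppose $r_s=r_{s'}=r$, the merge criterion gives $d(s,s')\le 4r$, and some $x$ in $s'$'s basin has $d(x,s')=r$; then after deleting $s'$ and setting $r_s:=4r$ you can only conclude $d(x,s)\le r+4r=5r>4r$, so the covering invariant fails. Your first proposed fix (a strengthened invariant $d(x,s_x)\le\alpha r_{s_x}$ with $\alpha<1$) provably cannot close it: in the equal-radii step you would need $(4+\alpha)r\le 4\alpha r$, i.e.\ $\alpha\ge 4/3$, contradicting $\alpha<1$. Your second proposed fix (a tie-breaking rule) is not specified, and it is unclear that one exists, since at the very first merge all radii are $L$ and you are forced into the equal-radii case. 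What \emph{does} work is increasing the growth factor from $4$ to $5$ (then the equal-radii bound $5r$ matches the new radius, and the unequal-radii case is even looser), but this gives $r_s\le 5^{N-1}L$ rather than the stated $4^{N-1}L$. That weaker bound still suffices for the Sublevel Set Lemma downstream, since all that is used there is $r_s\simeq_d L$, but it does not prove the lemma as stated. So as written the proof has a real hole, and the cheapest repair changes the constant.
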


\begin{proof}
	By the pigeonhole principle, for every $x\in X$ we can choose a radius $r_x\in [L, 4^{N-1}L]$ such that no point $s$ of $X$ satisfies the bounds $r_x< d(x,s)\leq 4r_x$. The set $S$ is built applying the following greedy algorithm: \begin{enumerate}
		\item Set initially $k=0$, $S_0:=$empty set and $X_0:=X$. Go to step (2). 
		\item If $X_k$ is empty, then halt. Otherwise go to step (3).
		\item Pick a point $s\in X_k$ with largest possible radius $r_s$ and define \[
		X_{k+1}:=X_k\setminus\{y\in X\colon\ d(y,s)\leq r_s\}, \quad S_{k+1}:=S_{k}\cup \{s\}. 
		\]
		\item Increase the counter $k$ by one and go back to step (2). 
	\end{enumerate}
	We omit the easy verification of the conditions in the statement. 
\end{proof}

\begin{lem}[Sublevel Set Lemma]\label{lem:sublevel}
	Let $P(z)$ be a nonzero polynomial of one complex variable of degree $d$. Denote by $A$ the absolute value of its leading coefficient. 
	
	Then there is a subset $R$ of the set of roots of $P$ and a collection of radii $\{r_\zeta\}_{\zeta\in R}$ with the following properties: \begin{enumerate}
		\item $r_\zeta\lesssim_d A^{-\frac{1}{d+1}}$, 
		\item on the disc $D(\zeta, 4r_\zeta)$, we have $|P(z)|\lesssim_d r_{\zeta}^{-1}$, 
		\item on the set $\C\setminus\left(\bigcup_{\zeta\in R}D(\zeta, 2r_\zeta)\right)$, we have $|P(z)|\gtrsim_d A^{\frac{1}{d+1}}$. 
	\end{enumerate}
\end{lem}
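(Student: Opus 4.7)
The plan is to apply the Clustering Lemma just proved to the set of distinct roots of $P$ (of cardinality $N \leq d$) with the Euclidean distance and parameter $L := c_d \, A^{-\frac{1}{d+1}}$, for a suitably small constant $c_d$ depending only on $d$. This produces a subset $S$ of cluster centers and radii $\{\rho_s\}_{s \in S}$ with $\rho_s \in [L, 4^{N-1} L]$, satisfying the separation $|s-s'| > 2\rho_s + 2\rho_{s'}$ for distinct $s, s' \in S$, and such that each root lies in a unique closed disc $\overline{D(s, \rho_s)}$. I set $R := S$ and turn to the (delicate) choice of the $r_s$.

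For each $s \in S$, write $k_s$ for the total multiplicity in $P$ of the roots in the cluster of $s$ (so $\sum_s k_s = d$) and $B_s := \prod_{s' \in S,\, s' \neq s} |s-s'|^{k_{s'}}$. The heuristic picture is that for $z$ in a suitable annulus around $s$ the polynomial factorizes approximately as $|P(z)| \simeq A\,|z-s|^{k_s} B_s$, since by the cluster separation the contributions from roots outside the cluster of $s$ are essentially constant at the scale of the cluster. The natural choice of radius making the resulting upper bound $A r_s^{k_s} B_s$ coincide with $r_s^{-1}$ is
\[
r_s := C_d \, (A B_s)^{-\frac{1}{k_s+1}},
\]
with $C_d$ a constant depending on $d$ to be tuned together with $c_d$.

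With this choice, Property (1) follows by combining the definition of $r_s$ with the lower bound $B_s \gtrsim_d L^{d-k_s} \sim_d A^{-(d-k_s)/(d+1)}$ coming from the cluster separation $|s-s'| \geq 2L$; inserting into the formula gives $r_s \lesssim_d A^{-\frac{1}{d+1}}$. Property (2) is checked by establishing the approximate factorization for $z \in D(s, 4r_s)$ and then verifying $A \, r_s^{k_s} B_s = C_d^{k_s+1} r_s^{-1} \lesssim r_s^{-1}$. Property (3) is proved by a Voronoi-type argument on $\C \setminus \bigcup_s D(s, 2r_s)$: letting $s$ be the cluster center closest to $z$, one estimates $|z - \zeta_j|$ from below in terms of $r_s$ (for $\zeta_j$ in the cluster of $s$, via the triangle inequality once one verifies $r_s \gtrsim \rho_s$) and in terms of $|s-s'|/2$ (for $\zeta_j$ in another cluster, since $z$ is closer to $s$), arriving at $|P(z)| \gtrsim_d A r_s^{k_s} B_s \sim_d A^{\frac{1}{d+1}}$.

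The main obstacle is the potential breakdown of the approximate factorization $|P(z)| \simeq A\,|z-s|^{k_s} B_s$ on $D(s, 4r_s)$ when a cluster of $s$ contains several distinct roots whose intra-cluster separations lie at a scale different from $r_s$: then the ``static'' roots in the cluster contribute nontrivially to the $|z - \zeta_j|$ factors and the simple factorization is inaccurate. Reconciling Properties (2) and (3) in this regime requires either forcing $r_s \geq \rho_s$ (so that $D(s, 4r_s)$ actually contains the whole cluster and the factorization is valid), or supplementing with a secondary clustering argument within each cluster at a smaller scale. The scale $L \sim A^{-1/(d+1)}$ in the initial clustering is precisely the critical one making $\rho_s$ and $(AB_s)^{-1/(k_s+1)}$ comparable up to $d$-dependent constants, and the flexibility $\rho_s \in [L, 4^{N-1} L]$ in the Clustering Lemma, together with careful tuning of $c_d$ and $C_d$, is what eventually allows simultaneous validity of all three bounds.
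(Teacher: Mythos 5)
You correctly identify the obstacle, but it is fatal to the one-shot approach rather than a matter of tuning constants. With $R = S$ and $r_s = C_d (A B_s)^{-1/(k_s+1)}$, whenever there are two or more clusters you have $k_s < d$, and then generically $r_s \ll \rho_s$: for instance, if the cluster centers are at $O(1)$ mutual distance so that $B_s \simeq 1$, one gets $r_s \simeq A^{-1/(k_s+1)}$ while $\rho_s \simeq A^{-1/(d+1)}$, and the former is strictly smaller for large $A$. In that regime the cluster of $s$ contains roots of $P$ lying outside $D(s, 2r_s)$, so $|P|$ vanishes at points of $\C \setminus \bigcup_s D(s, 2r_s)$ and Property (3) fails. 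The claimed comparability $\rho_s \simeq (A B_s)^{-1/(k_s+1)}$ only holds when $B_s$ is at the lower end of its range (clusters as close as the separation allows); it is not a general feature of the construction. Nor does inflating $r_s$ to $\max\{\rho_s, (A B_s)^{-1/(k_s+1)}\}$ help: with $r_s \simeq \rho_s$, on $D(s, 4\rho_s)$ one only gets $|P| \lesssim A \rho_s^{k_s} B_s$, and when $B_s \simeq 1$ this is $\simeq A^{(d-k_s)/(d+1)} \rho_s^{-1}$, which is not $\lesssim \rho_s^{-1}$ for large $A$, so Property (2) fails instead. No single choice of one radius per cluster reconciles (2) and (3) across all configurations.

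Your second suggested fix --- a secondary clustering argument within each cluster at a smaller scale --- is essentially what the paper does, but it must be carried out as a genuine induction on the degree, not as a one-off patch. The paper applies the Clustering Lemma at scale $L = A^{-1/(d+1)}$ as you do; then, in the multi-cluster case, it observes that on each cluster disc the polynomial $P$ is comparable (up to $d$-dependent constants) to an auxiliary polynomial $P_s$ of strictly smaller degree $d_s$ whose roots are exactly the roots of $P$ in the cluster of $s$ and whose leading coefficient has modulus $A_s = K A B_s$. The Sublevel Set Lemma is applied \emph{inductively} to $P_s$, yielding sub-roots $R_s$ and sub-radii $r_{s,\zeta}$, and the final output is $R = \bigcup_s R_s$. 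This recursion automatically adapts to the variable intra-cluster scales that derail the one-level construction; the bound $r_{s,\zeta} \lesssim_d A^{-1/(d+1)}$ of Property (1) then follows from the inductive hypothesis together with the separation estimate $|s - s'| \gtrsim_d A^{-1/(d+1)}$, which is exactly the computation in the paper's inequality \eqref{rszeta}.
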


\begin{proof}
	We argue by induction on the degree, the case $d=0$ being trivial.
	
	If $P$ has degree $d\geq 1$ and leading coefficient of modulus $A$, we apply the Clustering Lemma to its set of roots with $L=A^{-\frac{1}{d+1}}$. We obtain a subset of roots $S$ and a collection of radii $r_s\simeq_dA^{-\frac{1}{d+1}}$ ($s\in S$). 
	
	Denote by $\mathcal{C}_s$ the "cluster at $s$", namely the set of roots of $P$ at distance at most $r_s$ from $s$. We also denote by $m_\zeta$ the multiplicity of $\zeta$ as a root of $P$ and we define $d_s:=\sum_{\zeta\in\mathcal{C}_s}m_\zeta$.
	
	We now argue differently depending on whether there is a single cluster or more than one. 
	
	\textbf{Case 1: there is only one cluster}. In this case $S=\{s\}$ and all the roots $\zeta$ of $P$ are in the disc $\overline{D(s, r_s)}$. Thus, on the disc $D(s,4r_s)$ we have \[
	|P(z)|=A\prod_{\zeta}|z-\zeta|^{m_\zeta}\lesssim_d A \left(A^{-\frac{1}{d+1}}\right)^d=A^{\frac{1}{d+1}}\simeq_d r_s^{-1}, \]
	and in the complement of the disc $D(s, 2r_s)$ we have the reverse inequality $|P(z)|\gtrsim_d A^{\frac{1}{d+1}}$. 
	
	\textbf{Case 2: there are two or more clusters}. Since all the roots are contained in $\bigcup_{s\in S}\overline{D(s, r_s)}$, on the complement of the set $\bigcup_{s\in S}D(s, 2r_s)$ we have the lower bound \begin{equation}\label{eq:case2}
	|P(z)|\geq A\prod_{s\in S} r_s^{d_s}\simeq_d A^{\frac{1}{d+1}}. 
	\end{equation}
	Let now $z\in D(s, 2r_s)$. Since $D(s, 2r_s)$ has empty intersection with $D(s', 2r_{s'})$ for every $s'\neq s$ and all the roots of $P$ in the latter disc (that is, those in the cluster $\mathcal{C}_{s'}$) are actually contained in $\overline{D(s',r_{s'})}$, it is easily seen that \[|z-\zeta|\simeq_d |s-s'| \qquad\forall z\in D(s,2r_s),\ \zeta\in \mathcal{C}_{s'}.\] 
	Hence, on $D(s,2r_s)$ we have \[
	|P(z)|\simeq_d A\prod_{s'\neq s} |s'-s|^{d_{s'}} \prod_{\zeta\in \mathcal{C}_s}|z-\zeta|^{m_\zeta}. 
	\]
	Let $P_s$ be the polynomial of degree $d_s<d$ with leading coefficient  \[A_s:=KA\prod_{s'\neq s} |s'-s|^{d_{s'}}\]
	that has a root of multiplicity $m_\zeta$ at every $\zeta\in \mathcal{C}_s$. Here $K$ is a positive constant depending only on $d$ to be fixed soon. By what we said above, $|P(z)|\simeq_d|P_s(z)|$ on the disc $D(s, 2r_s)$.

	Applying the inductive hypothesis to $P_s$, we obtain a subset $R_s\subseteq \mathcal{C}_s$ and a collection of radii $\{r_{s,\zeta}\}_{\zeta\in R_s}$. We now show that $R=\bigcup_{s\in S}R_s$ is the desired set of roots of $P$.
	
	Notice that the inequality $|s-s'|\gtrsim_d A^{-\frac{1}{d+1}}$ (valid for $s'\in S\setminus \{s\}$) gives \begin{equation}\label{rszeta}
		r_{s, \zeta}\lesssim_d A_s^{-\frac{1}{d_s+1}}\lesssim_d K^{-\frac{1}{d_s+1}}\left(A\prod_{s'\neq s}A^{-\frac{d_{s'}}{d+1}}\right)^{-\frac{1}{d_s+1}}=K^{-\frac{1}{d_s+1}}A^{-\frac{1}{d+1}}.
	\end{equation}
This verifies property (1) of the statement. Choosing $K$ large enough we may ensure that $r_{s, \zeta}\leq \frac{1}{4}r_s$ (recall that $r_s$ is comparable to $A^{-\frac{1}{d+1}}$, not only bounded above). This guarantees that, if $\zeta\in R_s$, then $D(\zeta, 4r_{s,\zeta})\subseteq D(s, 2r_s)$, so that the bound $|P_s(z)|\lesssim_d r_{s,\zeta}^{-1}$ valid on $D(\zeta, 4r_{s,\zeta})$ can be transferred to $|P|$. Property (2) is also verified.  \newline 
Finally, on the set \[
	D(s,2r_s)\setminus \left(\bigcup_{\zeta\in R_s}D(\zeta, 2r_{s,\zeta})\right)
	\] 
	we have \[|P(z)|\simeq_d|P_s(z)|\gtrsim_d A_s^{\frac{1}{d_s+1}}\gtrsim_d A^{\frac{1}{d+1}},\]
	where the last inequality is the same that we used in \eqref{rszeta}. Property (3) follows from \eqref{eq:case2} and the last inequality. 
\end{proof}

\subsection{Proof of Proposition \ref{lem:main_1d_estimate}}\label{sec:proof_main_one_dim}

	By the Sublevel Set Lemma (Lemma \ref{lem:sublevel}) we have a set $R\subseteq D(0,1/2)$ of cardinality at most $d$ and a collection of radii $r_\zeta\lesssim_dA^{-\frac{1}{d+1}}$, one for each $\zeta\in R$. Choosing $C_d$ large enough, we see that the discs $D(\zeta, 4r_\zeta)$ are contained in the unit disc. 
	
	By conditions (2) and (3) of the Sublevel Set Lemma, we are in a position to apply the weighted $\dbar$-UP (Lemma \ref{lem:dbar_unc_metric}), which yields \[
	\int_{D(\zeta, 4r_\zeta)}\left|\frac{\partial w}{\partial \overline{z}}\right|^2e^{-2\varphi}+\int_{D(\zeta, 4r_\zeta)}|P|^2|w|^2e^{-2\varphi}\gtrsim_{d, B} \min\{r_\zeta^{-2}, A^{\frac{2}{d+1}}\}\int_{D(\zeta, 4r_\zeta)}|w|^2e^{-2\varphi}.
	\]
	By condition (1), we see that the minimum above is $\simeq_d A^{\frac{2}{d+1}}$. Summing over $\zeta\in R$, we get \[
	\int_\Omega\left|\frac{\partial w}{\partial \overline{z}}\right|^2e^{-2\varphi}+\int_\Omega|P|^2|w|^2e^{-2\varphi}\gtrsim_{d, B} A^{\frac{2}{d+1}}\int_\Omega|w|^2e^{-2\varphi}, 
	\]
	where $\Omega=\bigcup_{\zeta\in R}D(\zeta, 2r_\zeta)$. By condition (3) of Lemma \ref{lem:sublevel} one gets a similar estimate in the complementary region $\mathbb{D}\setminus \Omega$. The proof is complete.

\section{Approximate minimal eigenvector fields}\label{sec:AME}

In this section we introduce the notion of approximate minimal eigenvector field for a plush function $\varphi$, and prove two basic propositions about it. 

\begin{dfn}[Approximate minimal eigenvector field]\label{dfn:ame}
	Let $\varphi$ be a smooth plush function defined near $p\in \C^n$. A germ of holomorphic vector field $X$ at $p$ is said to be an approximate minimal eigenvector field for $\varphi$ at $p$ if $X(p)\neq0$ and there is a positive constant $C$ such that  \begin{equation}\label{eq:ame}
	(H(z)X(z), X(z))\leq C \lambda_1(z) 
	\end{equation} for every $z$ in a neighborhood of $p$ (recall that $H$ is the Levi form of $\varphi$ defined in \eqref{eq:H}). 
\end{dfn}
Notice that the inequality $(H(z)X(z), X(z))\gtrsim \lambda_1(z) $ automatically holds near $p$, so \eqref{eq:ame} really expresses the fact that $X$ behaves approximately as an eigenvector field of minimal eigenvalue of the Levi form $H(z)$. 

The key role played by approximate minimal eigenvector fields in our proof of Theorem \ref{thm:main} stems from the following observation. Let $X$ be an approximate minimal eigenvector field for $\varphi$, and denote by $\exp_p(\zeta X)$ the solution of the complex ODE 	
\[\begin{cases}	&\frac{\partial}{\partial\zeta}\exp_p(\zeta X) = X(\exp_p(\zeta X))\\	&\exp_p(0 X)=p\end{cases}, \]
which is defined in a neighborhood of $0\in \C$. Then the mapping $\zeta\mapsto \exp_p(\zeta X)$ is holomorphic and \begin{eqnarray}
\notag	\Delta(\varphi(\exp_p(\zeta X))) &=& \left(H(\exp_p(\zeta X))\frac{\partial}{\partial\zeta}\exp_p(\zeta X), \frac{\partial}{\partial\zeta}\exp_p(\zeta X)\right)\\
\notag	&=& \left(H(\exp_p(\zeta X))X(\exp_p(\zeta X)), X(\exp_p(\zeta X))\right)\\
\label{eq:ame_observation}	&\simeq&\lambda_1(\exp_p(\zeta X)). 
\end{eqnarray}

Loosely speaking, along the analytic disc $\zeta\mapsto \exp_p(\zeta X)$, the plush function $\varphi$ (or more precisely, the metric $e^{-2\varphi}$) is as flat as possible (cf.~also Remark \ref{rmk:metric}), a very convenient property in light of the $\dbar$-UPs discussed in the previous section. 

The next proposition shows that approximate minimal eigenvector fields indeed exist, for an appropriate class of (germs of) plush functions. This is where we make a one-dimensionality assumption on the null-space of the Levi form. 

\begin{prp}[Existence of approximate minimal eigenvector fields]\label{prp:vector_field}
	Let $\varphi$ be a HSOnS defined near $p\in U$. Assume that the kernel of $H(p)$ is one-dimensional. Then there exists an approximate minimal eigenvector field for $\varphi$ at $p$. 
\end{prp}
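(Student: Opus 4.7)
The plan is to exploit the Hermitian Sum of $n$ Squares structure to identify the Levi form as $A^*A$ for a square holomorphic matrix, and then produce a holomorphic kernel section via the classical adjugate. Writing $\varphi = \sum_{\ell=1}^n |F_\ell|^2$ with $F_\ell$ holomorphic and setting
\[
A(z) := \left(\frac{\partial F_\ell}{\partial z_j}\right)_{\ell, j = 1}^n,
\]
a direct computation of $\partial^2\varphi/\partial\overline z_j\partial z_k$ gives $H(z) = A(z)^*A(z)$. In particular $\ker H(z) = \ker A(z)$ pointwise, and the one-dimensional kernel hypothesis becomes $\operatorname{rank} A(p) = n-1$.

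The key step is then to select a holomorphic element of $\ker A(z)$ that is nonzero at $p$. Because $A(p)$ has rank $n-1$, some $(n-1)\times(n-1)$ minor of $A(p)$ is nonzero, hence some entry of the classical adjugate $\operatorname{adj} A(p)$ is nonzero; pick $j_0$ so that the $j_0$-th column of $\operatorname{adj} A(p)$ is nonvanishing and set
\[
X(z) := \operatorname{adj}(A(z))\, e_{j_0}.
\]
Since $\operatorname{adj} A$ is polynomial in the entries of $A$, this $X$ is a holomorphic vector field and $X(p)\neq 0$. The classical identity $A\cdot\operatorname{adj}(A) = (\det A)\,I$ gives $A(z)X(z) = \det(A(z))\,e_{j_0}$, whence
\[
(H(z)X(z),X(z)) = |A(z)X(z)|^2 = |\det A(z)|^2.
\]

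To close the argument I would compare $|\det A(z)|^2$ with $\lambda_1(z)$. In terms of the singular values $\sigma_1(z)\geq\cdots\geq\sigma_n(z)\geq 0$ of $A(z)$ one has $|\det A(z)|^2 = \prod_i \sigma_i(z)^2$, while $\lambda_1(z) = \sigma_n(z)^2$ because $H = A^*A$. Hence
\[
|\det A(z)|^2 \;\leq\; \|A(z)\|_{\mathrm{op}}^{2(n-1)}\,\lambda_1(z),
\]
and since $\|A\|_{\mathrm{op}}$ is continuous (so locally bounded) near $p$, we obtain $(HX,X)\leq C\lambda_1$ in a neighborhood of $p$, verifying the definition of an approximate minimal eigenvector field.

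The subtle point—what one might at first expect to be the main obstacle—is the holomorphic selection of a kernel element: pointwise minimal eigenvectors of $H(z)$ need not depend continuously, let alone holomorphically, on $z$ when the eigenvalue $\lambda_1$ crosses other eigenvalues of $H$. The adjugate construction sidesteps this entirely by producing a specific vector in $\ker A(p)$ that is a polynomial in the entries of $A$, at the cost of allowing $(HX,X)$ to equal a bounded multiple of $\lambda_1$ rather than $\lambda_1$ itself; this is exactly the flexibility already built into Definition \ref{dfn:ame}. Note that the argument genuinely uses that the number of squares equals the complex dimension (so that $A$ is square and $\det A$ makes sense); this is why the hypothesis is HSO$n$S rather than general HSOS.
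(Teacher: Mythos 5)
Your proof is correct, and it takes a genuinely different route from the paper's. The paper also starts from $H = J^*J$ (its $J$ is your $A$), but then reaches for analytic perturbation theory: it first modifies $J$ by a unitary $M$ chosen so that $MJ(p)$ has a simple (rather than defective) zero eigenvalue, invokes the Riesz projection $\Pi(z)=\frac{1}{2\pi i}\oint(\zeta I - MJ(z))^{-1}\,d\zeta$ to obtain a holomorphic rank-one projection onto the perturbed near-kernel eigenspace, and sets $X(z)=\Pi(z)v_0$; the estimate $(HX,X)\lesssim\lambda_1$ then comes from comparing $|\det J|^2$ with $\det H\simeq\lambda_1$. Your adjugate construction reaches the same endpoint by pure linear algebra: $X(z)=\operatorname{adj}(A(z))\,e_{j_0}$ is automatically holomorphic, $A\,\operatorname{adj}(A)=(\det A)I$ gives $(HX,X)=|\det A|^2$ on the nose, and the singular-value inequality $|\det A|^2\le \lVert A\rVert_{\mathrm{op}}^{2(n-1)}\sigma_n^2 = \lVert A\rVert_{\mathrm{op}}^{2(n-1)}\lambda_1$ closes the argument with no contour integrals and no unitary normalization. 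What the paper's route buys is conceptual alignment with the perturbation-theoretic picture (a canonical holomorphic family of spectral projections), which is suggestive if one wanted to attack higher-dimensional kernels; what your route buys is economy and transparency — it makes manifest that the only input is $\operatorname{rank} A(p)=n-1$ plus holomorphy of $A$, and it sidesteps the Jordan-block subtlety that forced the paper to introduce the unitary $M$. Both arguments use the HSO$n$S hypothesis in the same essential way: $A$ must be a \emph{square} matrix for $\det A$ (or equivalently the finite-dimensional spectral calculus) to be available. One small presentational remark: it is worth noting explicitly, as you implicitly do, that the inequality $|\det A|^2\le C\lambda_1$ degenerates gracefully when $\lambda_1(z)=0$, since then $\sigma_n(z)=0$ forces $\det A(z)=0$ as well, so the bound holds in the required pointwise sense throughout a neighborhood of $p$.
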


In order to state our second proposition about approximate minimal eigenvector fields, we need to recall a notion of type introduced in \cite{catlin_necessary}. 

\begin{dfn}[Catlin's $1$-type]\label{dfn:catlin} We say that the collection of holomorphic maps \[\Psi=\{\psi_t:D(0,t)\rightarrow \C^{n+1}\}_{t\in (0,1]}, \]
	is a family of analytic discs shrinking to $q\in \C^{n+1}$ if \[
\lim_{t\rightarrow 0}\psi_t(0)=q,\quad	\sup_t\lVert\psi_t'\rVert_{L^\infty(D(0,t))}<+\infty,\quad \inf_t|\psi_t'(0)|>0. 
	\]
	
	If $q\in b\Omega$, then we define the order of contact $T(\Psi)$ of $\Psi=\{\psi_t\}_{t\in (0,1]}$ as the supremum over all $m\geq 0$ such that
	\[	\mathrm{dist}_{b\Omega}(\psi_t(\zeta))=O(t^m) \qquad \forall t\in (0,1],\quad \forall\zeta\in D(0,t).	\]
	Here $\mathrm{dist}_{b\Omega}$ is the Euclidean distance to the boundary (or any other comparable distance). 
	
	Let $\Omega\subset \C^{n+1}$ be a domain with smooth boundary. We define the Catlin's $1$-type of $b\Omega$ at $q$ as $T^1(b\Omega, q)=\sup T(\Psi)$, where the supremum is over all families of analytic discs shrinking to $q$.
\end{dfn}

\begin{rmk}
The definition above consists of an equivalent restatement of properties (i), (ii) and (iii) at p.~148 of \cite{catlin_necessary} in the case $q=1$ and $T=(0,1]$, plus the additional requirement that the centers of the analytic discs converge to a given point $q\in \C^{n+1}$. Notice that there is no loss in considering only $(0,1]$ as set of parameters for the family of discs. The terminology "family of analytic discs shrinking to $q$" is ours, as is the term "Catlin's $1$-type" for the resulting notion of type.  
\end{rmk}

Of course, $T^1(b\Omega, q)$ only depends on the germ of $b\Omega$ at $q$. 
 
\begin{prp}[Approx.~minimal eigenvector fields vs.~Catlin's $1$-type]\label{prp:type}
Let $\varphi:U\rightarrow\R$ be a smooth plush function, where $U\subseteq \C^n$ is open, and $\Omega=\{(z,z_{n+1})\in U\times \C\colon\, \im(z_{n+1})>\varphi(z))\}$ the associated pseudoconvex rigid domain. Let $p\in U$ and let $q=(p, \tau+i\varphi(p))$ be a boundary point of $\Omega$ "over $p$". Let $X$ be an approximate minimal eigenvector field for $\varphi$ at $p$ and $m_0\in \N$ be such that \[
\lambda_1(\exp_p(\zeta X))=O(|\zeta|^{m_0}).
\] Then \[
m_0+2\leq T^1(b\Omega, q). 
\]
	\end{prp}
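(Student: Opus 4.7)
The plan is to exhibit a family of analytic discs shrinking to $q$ whose order of contact with $b\Omega$ is at least $m_0+2$. The natural candidate uses the integral curve $\zeta\mapsto \exp_p(\zeta X)$ of $X$, lifted to $\C^{n+1}$ by a suitable holomorphic function of $\zeta$ in the last coordinate, chosen so that its imaginary part cancels $\varphi(\exp_p(\zeta X))$ as accurately as the vanishing of $\lambda_1$ permits. Concretely, I will build a single holomorphic map $\psi:D(0,r_0)\to\C^{n+1}$ (with $r_0$ independent of $t$, and after rescaling $\zeta$ we may take $r_0\ge 1$) and set $\psi_t:=\psi|_{D(0,t)}$ for $t\in(0,1]$.

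Set $h(\zeta):=\varphi(\exp_p(\zeta X))$, so by the holomorphicity of $\exp_p(\zeta X)$ in $\zeta$ and the identity \eqref{eq:ame_observation} combined with the hypothesis on $\lambda_1$, one has $\Delta h(\zeta)=O(|\zeta|^{m_0})$. The core step is to extract a holomorphic polynomial $G$ with $G(0)=\varphi(p)$ and $h-\re G=O(|\zeta|^{m_0+2})$. Expanding
\begin{equation*}
h(\zeta)=\sum_{j+k\le m_0+1} a_{jk}\,\zeta^j\bar\zeta^k + O(|\zeta|^{m_0+2}),
\end{equation*}
with $a_{jk}=\overline{a_{kj}}$ since $h$ is real-valued, the pure-holomorphic and pure-antiholomorphic terms (those with $j=0$ or $k=0$) are cancelled by the choice
\begin{equation*}
G(\zeta):=a_{00}+2\sum_{j=1}^{m_0+1}a_{j0}\zeta^j,
\end{equation*}
leaving only the "mixed" monomials $a_{jk}\zeta^j\bar\zeta^k$ with $j,k\ge 1$. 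Each such mixed monomial contributes $jk\,a_{jk}\zeta^{j-1}\bar\zeta^{k-1}$ to $\tfrac14\Delta h$, so the vanishing of $\Delta h$ to order $m_0$ forces $a_{jk}=0$ whenever $j,k\ge 1$ and $j+k\le m_0+1$. This yields $h-\re G=O(|\zeta|^{m_0+2})$, and $G(0)=a_{00}=h(0)=\varphi(p)$ is automatic.

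With $G$ in hand, define
\begin{equation*}
\psi_t(\zeta):=\bigl(\exp_p(\zeta X),\,\tau+iG(\zeta)\bigr),\qquad \zeta\in D(0,t).
\end{equation*}
Then $\psi_t(0)=(p,\tau+i\varphi(p))=q$. The derivative $\psi_t'(\zeta)=\bigl(X(\exp_p(\zeta X)),\,iG'(\zeta)\bigr)$ is uniformly bounded on $D(0,1)$ and satisfies $|\psi_t'(0)|\ge|X(p)|>0$ because $X(p)\ne 0$, so $\{\psi_t\}_{t\in(0,1]}$ is a family of analytic discs shrinking to $q$ in the sense of Definition \ref{dfn:catlin}. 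Using the defining function $r(z,z_{n+1}):=\varphi(z)-\im z_{n+1}$ and noting that $\im(iG(\zeta))=\re G(\zeta)$, one computes
\begin{equation*}
r(\psi_t(\zeta))=\varphi(\exp_p(\zeta X))-\re G(\zeta)=h(\zeta)-\re G(\zeta)=O(|\zeta|^{m_0+2})=O(t^{m_0+2})
\end{equation*}
uniformly for $\zeta\in D(0,t)$, whence $\mathrm{dist}_{b\Omega}(\psi_t(\zeta))=O(t^{m_0+2})$ and Definition \ref{dfn:catlin} gives $T^1(b\Omega;q)\ge m_0+2$.

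The only step requiring thought is the Taylor-decomposition of $h$ in the second paragraph, but it reduces to elementary bookkeeping of mixed versus unmixed monomials. The substance of the proposition lies in the defining property \eqref{eq:ame} of an approximate minimal eigenvector field, which via the chain-rule computation \eqref{eq:ame_observation} converts the pointwise bound $(HX,X)\lesssim\lambda_1$ on $\C^n$ into the one-dimensional vanishing $\Delta h=O(|\zeta|^{m_0})$; everything else is then routine.
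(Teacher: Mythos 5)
Your proof is correct, and it takes a genuinely different route from the paper. The paper does not prove Proposition~\ref{prp:type} directly: it establishes the more general Proposition~\ref{prp:FvsCatlin}, comparing $\mathfrak{h}_p(\varphi)$ with Catlin's $1$-type along an \emph{arbitrary} nonsingular analytic disc $\psi$ with $\Delta(\varphi\circ\psi)=O(|\zeta|^m)$, and then deduces Proposition~\ref{prp:type} via the trivial observation \eqref{eq:VvsF} that $m_0\leq\mathfrak{h}_p(\varphi)$. To approximate $\varphi\circ\psi$ by (the imaginary part of) a holomorphic function, the paper works scale by scale: for each radius $t$ it applies Lemma~\ref{lem:well_known} (the Newtonian-potential lemma in the appendix) to produce a $t$-dependent holomorphic function $G_t$ on $D(0,t)$ with $\varphi\circ\psi = \re G_t + O(t^{m+2})$, together with a gradient bound that guarantees the family is an admissible family of shrinking analytic discs. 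You instead exploit the fact that $m_0$ is an integer and the disc is the fixed integral curve $\exp_p(\zeta X)$: the vanishing $\Delta h=O(|\zeta|^{m_0})$ annihilates exactly the mixed Taylor coefficients $a_{jk}$ with $j,k\geq 1$, $j+k\leq m_0+1$, so a single holomorphic \emph{polynomial} $G$ (the anti-Wirtinger part of the jet) does the job uniformly in $t$. This makes the admissibility of the disc family immediate, with no need for the appendix lemma or the rescaling by $0.5$ that the paper uses to stay inside the region where the gradient bound holds. The trade-off is that the paper's Lemma~\ref{lem:well_known} and Proposition~\ref{prp:FvsCatlin} are reused later (e.g.\ in Proposition~\ref{prp:type_comparison} and in Section~\ref{sec:scaling}), so the detour is not wasted there, whereas your argument is local to this one proposition. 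One small remark: when you rescale $\zeta$ to ensure $r_0\geq 1$ you should note that the constants in $\psi_t'$ and in $h-\re G=O(|\zeta|^{m_0+2})$ change harmlessly; this is implicit but worth a word.
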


The proof of Proposition \ref{prp:vector_field} is given in Section \ref{sec:holo_vf}, while the proof of Proposition \ref{prp:type} occupies Section \ref{sec:type}. 

\subsection{Existence of approximate minimal eigenvector fields}\label{sec:holo_vf}

Let $\varphi=|F|^2$, where \[F=(F_1,\ldots, F_n):U\rightarrow \C^n\] is a holomorphic mapping. Let $J$ be the complex Jacobian matrix of the mapping $F$, that is, \begin{equation}\label{eq:J}
	J_{jk}(z)=\frac{\partial F_j}{\partial z_k}(z)\qquad (1\leq j, k\leq n).
\end{equation} We will make use of the following elementary identity for the Levi form of $\varphi$ (defined in \eqref{eq:H}):\[
H_{jk}:=\frac{\partial^2\varphi}{\partial \overline{z}_j\partial z_k}=  \sum_{\ell=1}^n \overline{\frac{\partial F_\ell}{\partial z_j}}\frac{\partial F_\ell}{\partial z_k}, 
\]
that is, \begin{equation}\label{eq:J^*J}
	H= J^*J, 
\end{equation}
where $J^*$ is the (pointwise) conjugate transpose of $J$. Notice that $J$ is a square matrix and that $\det H(z)=|\det J(z)|^2$. An equivalent reformulation of \eqref{eq:J^*J} is \[
(H(z)v,v) = |J(z)v|^2\qquad \forall v\in \C^n,\,  \forall z \in U.
\]
Thus, $\ker H(z)=\ker J(z)$. 

	We need a basic integral formula from perturbation theory. Let $A$ be an $n\times n$ matrix with complex coefficients. Denote by $E(\lambda)$ the generalized eigenspace of $A$ of eigenvalue $\lambda$, that is, 
	\[
	E(\lambda) = \bigcup_{k\geq 1} \ker (\lambda I_n-A)^k, 
	\]
	where $I_n$ is the $n\times n$ identity matrix. Let $D(z,r)$ be a disc in the complex plane not containing any of the eigenvalues of $A$ in its boundary. Then \begin{equation}\label{matrix_residue}
		\Pi=\frac{1}{2\pi i}\int_{\partial D(z,r)} (\zeta I_n-A)^{-1}d\zeta, 
	\end{equation}
where the boundary circle is oriented counterclockwise, is a linear projector onto $\bigoplus_{\lambda\in D(z,r)}E(\lambda)$ (see, e.g., \cite{kato}[formula (1.16) at p. 67]). We want to apply formula \eqref{matrix_residue} to $A=J(z)$, in order to obtain a projection-valued holomorphic function of $z$. Notice that here we take advantage of the fact that $\varphi$ is a Hermitian Sum of $n$ Squares, that is, that $J(z)$ is a square matrix. 
	
	Since $J(p)$ has one-dimensional kernel, it has at most one Jordan block corresponding to the zero eigenvalue, which a priori could be of size $>1$. To avoid this, we use a simple trick. Let $M$ be a $n\times n$ unitary matrix such that the kernel and the range of $MJ(p)$ have trivial intersection. Such a unitary matrix exists, because the two subspaces have complementary dimensions and the modification $J(p)\mapsto MJ(p)$ rotates the range without affecting the kernel. As a result, $MJ(p)$ induces an invertible linear map on its range, and therefore the Jordan block of $MJ(p)$ corresponding to the zero eigenvalue must be of size $1$. Equivalently, the characteristic polynomial of $MJ(p)$ has a simple zero at $p$. 
	
Consider then the mapping $\widetilde{F}(z)=M(F(z))$, whose Jacobian is $\widetilde{J}(z)=MJ(z)$. By continuity and our choice of $M$, there is $\varepsilon>0$ and a neighborhood $V\subseteq U$ of $p$ such that, for every $z\in V$, the matrix $MJ(z)$ has only one eigenvalue $\alpha(z)$ of multiplicity $1$ such that $|\alpha(z)|<\varepsilon$, and no eigenvalue of modulus $=\varepsilon$. 
	
	By \eqref{matrix_residue}, there is a holomorphic matrix-valued map $\Pi(z)$ such that \begin{equation}\label{hol_projection}
		\widetilde{J}(z)\Pi(z) = \alpha(z)\Pi(z)\qquad \forall z\in V, 
	\end{equation}
	and $\Pi(p)$ is a linear projection onto $\ker \widetilde{J}(p) = \ker J(p)$. Let $v_0$ be a nonzero vector in the kernel of $J(p)$. Then the holomorphic vector field \[X(z):=\Pi(z)v_0\] satisfies $X(p)=v_0$ and \[
	\left(H(z)v(z), v(z)\right) = |J(z)X(z)|^2 = |\widetilde{J}(z)X(z)|^2 = |\alpha(z)|^2 |X(z)|^2, 
	\] for every $z\in V$. 
	Since every eigenvalue of $\widetilde{J}(z)$ different from $\alpha(z)$ has modulus $>\varepsilon$, we have \[
	|\det J(z)|^2 = |\det(\widetilde{J}(z))|^2\simeq_\varphi |\alpha(z)|^2\qquad \forall z\in V, 
	\] and since only one eigenvalue of $H$ vanishes at $0$, we similarly have 
	\[\det H(z)\simeq_\varphi \lambda_1(z).\] Both approximate identities hold throughout $V$ (shrinking it, if needed). Using $\det H(z)=|\det J(z)|^2$, we conclude that $\left(H(z)v(z), v(z)\right)\simeq_\varphi \lambda_1(z)|v(z)|^2$, as we wanted. 

\subsection{Approximate minimal eigenvector fields vs.~Catlin's $1$-type}\label{sec:type}

We are going to prove a statement slightly more general than Proposition \ref{prp:type}. To formulate it, we require a definition. 

\begin{dfn}\label{def:h_p}Let $\varphi:U\rightarrow\R$ be a smooth plush function, where $U$ is open and $p\in U$. We denote by $\mathfrak{h}_p(\varphi)$ the supremum over all $m\geq 0$ for which there exists a nonsingular analytic disc $\psi:\mathbb{D}\rightarrow U$ through $p$ (that is, $\psi(0)=p$ and $\psi'(0)\neq 0$) such that \begin{equation}\label{eq:flatness}
	\Delta(\varphi\circ \psi)(\zeta)=O(|\zeta|^m). 
	\end{equation}
\end{dfn}

From \eqref{eq:ame_observation}, one sees immediately that \begin{equation}\label{eq:VvsF}
m_0\leq \mathfrak{h}_p(\varphi), 
\end{equation}
where $m_0$ is as in the statement of Proposition \ref{prp:type}. 

The promised more general statement is the following. 

\begin{prp}[$\mathfrak{h}_p(\varphi)$ vs.~Catlin's $1$-type]\label{prp:FvsCatlin}
	Let $\varphi:U\rightarrow\R$ be a smooth plush function and $\Omega=\{(z,z_{n+1})\in U\times \C\colon\, \im(z_{n+1})>\varphi(z))\}$ the associated pseudoconvex rigid domain. Let $p\in U$ and let $q=(p, \tau+i\varphi(p))$ be a boundary point of $\Omega$ "over $p$". Then \[
    \mathfrak{h}_p(\varphi)+2\leq T^1(b\Omega, q). 
	\]
\end{prp}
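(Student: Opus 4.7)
The plan is to lift any nearly-harmonic analytic disc in $U$ to a high-contact analytic disc in $\C^{n+1}$ landing at $q\in b\Omega$. Given a nonsingular analytic disc $\psi:\mathbb{D}\to U$ with $\psi(0)=p$ and $\Delta(\varphi\circ\psi)(\zeta)=O(|\zeta|^m)$, set $u(\zeta):=\varphi(\psi(\zeta))$. The idea is that the flatness of $\Delta u$ at the origin forces the top of the Taylor expansion of $u$ to be harmonic, hence to coincide with the imaginary part of a holomorphic polynomial, which we then use as the $(n+1)$-st coordinate of the lifted disc.

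More concretely, let $P$ be the Taylor polynomial of $u$ at $0$ of degree $m+1$, so that $u=P+O(|\zeta|^{m+2})$. The key observation is that $P$ is harmonic: indeed $\Delta P$ is a polynomial of degree at most $m-1$, and since the remainder $R=u-P$ is a smooth function vanishing to order $m+2$ at $0$ we have $\Delta R=O(|\zeta|^{m})$, whence $\Delta P=\Delta u-\Delta R=O(|\zeta|^m)$; a polynomial of degree $\leq m-1$ with this property must be identically zero. Therefore we can write $P=\im H$ for a (uniquely determined up to a real constant) holomorphic polynomial $H$, which we normalize by imposing $\re H(0)=\tau$, so that $H(0)=\tau+i\varphi(p)$.

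Now define the lift $\widetilde\psi(\zeta):=(\psi(\zeta), H(\zeta))$, which is holomorphic on some disc $D(0,\rho)$ and satisfies $\widetilde\psi(0)=q$, $\widetilde\psi'(0)=(\psi'(0),H'(0))\neq 0$. Using $r(z,z_{n+1})=\varphi(z)-\im z_{n+1}$ as a (smooth) defining function for $\Omega$ near $q$, so that $\mathrm{dist}_{b\Omega}\simeq |r|$ locally, we compute
\begin{equation*}
r(\widetilde\psi(\zeta)) \;=\; \varphi(\psi(\zeta))-\im H(\zeta) \;=\; u(\zeta)-P(\zeta) \;=\; O(|\zeta|^{m+2}).
\end{equation*}
Rescaling $\zeta\mapsto \rho\zeta$ we obtain a single holomorphic map $\phi:\mathbb{D}\to\C^{n+1}$ with $\phi(0)=q$, $\phi'(0)\neq 0$, $\|\phi'\|_{L^\infty(\mathbb{D})}<\infty$, and $\mathrm{dist}_{b\Omega}(\phi(\zeta))\lesssim |\zeta|^{m+2}$.

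Finally, the shrinking family $\Psi=\{\phi_t\}_{t\in(0,1]}$ with $\phi_t:=\phi|_{D(0,t)}$ satisfies the three requirements of Definition \ref{dfn:catlin} (the center and the derivative at $0$ are independent of $t$, and the sup-norm of $\phi_t'$ is bounded by $\|\phi'\|_{L^\infty(\mathbb{D})}$), and its order of contact is at least $m+2$ since $\mathrm{dist}_{b\Omega}(\phi_t(\zeta))\lesssim|\zeta|^{m+2}\leq t^{m+2}$ for $\zeta\in D(0,t)$. Hence $T^1(b\Omega;q)\geq m+2$ for every admissible $m$, and taking the supremum over $m$ yields the stated inequality. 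The only non-routine step is the harmonicity of $P$; everything else is a straightforward lift and bookkeeping.
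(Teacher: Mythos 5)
Your proof is correct and takes a genuinely simpler route than the paper's. The paper constructs a \emph{scale-dependent} holomorphic approximant $G_t\in\mathcal O(D(0,t))$ for each $t$, invoking the ``bounded curvature implies almost harmonicity'' lemma from Appendix~A: on $D(0,t)$ the Laplacian of $f=\varphi\circ\psi-\varphi(p)$ is $O(t^m)=O(t^{m+2})t^{-2}$, so $f=\re G_t+O(t^{m+2})$, and the discs $\psi_t(\zeta)=(\psi(0.5\zeta),\,iG_t(0.5\zeta)+i\varphi(p))$ have centers that merely \emph{converge} to $q$. You instead take the Taylor polynomial $P$ of $u=\varphi\circ\psi$ of degree $m+1$, observe that $\Delta P$ is a polynomial of degree $\leq m-1$ which is $O(|\zeta|^m)$ (since both $\Delta u$ and $\Delta R$ are, the latter because $R$ vanishes to order $m+2$), hence $\Delta P\equiv 0$, and lift via the holomorphic primitive $H$ with $P=\im H$. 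This yields a \emph{single} disc whose restrictions form a shrinking family with center exactly $q$, bypasses the appendix lemma entirely, and your verification of the three conditions of Definition~\ref{dfn:catlin} and the order-of-contact bound $r(\widetilde\psi(\zeta))=u(\zeta)-P(\zeta)=O(|\zeta|^{m+2})$ is complete. The only thing you give up relative to the paper's argument is that the multi-scale construction would still work if $\varphi$ were merely $C^2$ rather than smooth (Taylor expansion to order $m+1$ needs $C^{m+1}$ regularity), but since the standing hypothesis is that $\varphi$ is smooth, your argument is fully valid and arguably cleaner.
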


\begin{proof}
	By translation invariance in $\re(z_{n+1})$, we may assume that $\tau=0$. Let $\psi:\mathbb{D}\rightarrow U$ be a nonsingular analytic disc through $p$ such that \eqref{eq:flatness} holds. Rescaling the $\zeta$-variable, we may assume that $|\psi'(\zeta)|\simeq 1$ on the whole disc. Consider the subharmonic function $f(\zeta)=\varphi(\psi(\zeta))-\varphi(p)$. We have \[\lVert f\rVert_{L^\infty(D(0,t))}\lesssim t,\qquad \lVert\Delta f\rVert_{L^\infty(D(0,t))}\lesssim t^{m+2}t^{-2}\qquad \forall t\in (0,1].\]
	Hence, Lemma \ref{lem:well_known} gives a holomorphic function $G_t\in \mathcal{O}(D(0,t))$ such that \[
	\varphi\circ \psi=\im(iG_t+i\varphi(p))+O(t^{m+2})\quad \text{on }D(0,t)\] and \[
	t\lVert G_t'(\zeta)\rVert_{L^\infty(D(0,0.5t)}\lesssim t+t^{m+2}\lesssim t.
	\] 
	Subtracting an imaginary constant from $G_t$, we may assume that $\im(G_t(0))=0$. 
	Define\[
	\psi_t(\zeta):=(\psi(0.5\zeta), iG_t(0.5\zeta)+i\varphi(p))\qquad (\zeta\in D(0,t))
	\]
	By the estimates just established, one sees that $\Psi=\{\psi_t\}_{t\in (0,1]}$ is a family of analytic discs shrinking to $q$ (notice that $\im(iG_t(0))=O(t^{m+2})$, so $\psi_t(0)$ converges to $q=(p, i\varphi(p))$ as $t$ goes to zero). Finally, evaluating the defining function $r(z,z_{n+1})=\varphi(z)-\im(z_{n+1})$ on the discs we see that the family $\Psi$ has order of contact at least $m+2$ with the boundary. Hence $T^1(b\Omega; p+i\varphi(p))\geq m+2$. The conclusion follows taking the supremum over all nonsingular analytic discs through $p$. 
\end{proof}

\section{Sharp subelliptic estimates for a class of dilation-invariant special domains}\label{sec:ker_one_dim}

In this section we show how the tools introduced above allow to exactly determine the sharp order of subellipticity for an interesting class of domains. We begin with the definition of the relevant class of plush functions. 

\begin{dfn}\label{def:homog_special} Let $n$ and $d$ be positive integers. We define $\mathcal{H}_{n,d}$ as the class of plush functions $\varphi:\C^n\rightarrow \R$ satisfying the following properties: \begin{enumerate}
\item $\varphi$ is a HSOnS. 
\item $\varphi$ is $\C$-homogeneous of degree $2d$, that is, \[
\varphi(\lambda z)=|\lambda|^{2d}\varphi(z)\qquad \forall z\in \C^n,\quad \forall \lambda\in \C^\times, 
\] where $d\geq 1$ is an integer. 
\item $\varphi$ vanishes at the origin and only there. 
	\end{enumerate}
\end{dfn}
By condition (1), $\varphi=\sum_{\ell=1}^n|F_\ell|^2$, with each $F_\ell:\C^n\rightarrow \C$ holomorphic, while condition (2) is equivalent to saying that each $F_\ell$ is a polynomial of degree $d$. 

Condition (3) ensures that the mapping $F=(F_1,\ldots, F_n)$ descends to a non-constant holomorphic map (or a regular morphism, in the language of algebraic geometry) \[\widetilde{F}:\mathbb{P}^{n-1}\rightarrow \mathbb{P}^{n-1}.\] 

By condition (3), $0\in \C^n$ is an isolated point of the variety $\{F_\ell(z)=0\quad \forall \ell\}$, which is known to be equivalent to the finiteness of the D'Angelo type $\Delta^1(b\Omega; 0)$, where \[
\Omega=\{(z,z_{n+1})\in \C^{n+1}\colon\, \im(z_{n+1})>\varphi(z))\}
\] is the rigid domain associated to $\varphi$. See \cite[Proposition 4.3]{catlin_dangelo}. We call domains of this form \emph{dilation-invariant special domains}. 

The next definition singles out the key quantity associated to the self-mapping $\widetilde{F}$ of $\mathbb{P}^{n-1}$ that dictates which subelliptic estimates at the origin hold on $\Omega$. We recall that the order at $0$ of a holomorphic map $f:\mathbb{D}\rightarrow \C^n$ (or a germ thereof) is the quantity \[
\nu_0(f) = \min\{k\geq 1\colon\, f^{(k)}(0)\neq 0\}
\]
if $f$ is nonconstant, while $\nu_0(f)=+\infty$ if $f$ is constant. Equivalently, $\nu_0(f)=\sup\{m\geq 0\colon\, |f(\zeta)-f(0)|=O(|\zeta|^m)\}$, from which it is apparent that the order is well-defined for every holomorphic $f:\mathbb{D}\rightarrow Y$, where $Y$ is a complex manifold. 

\begin{dfn}\label{def:flatness} If $G:\mathbb{P}^{n-1}\rightarrow \mathbb{P}^{n-1}$ is holomorphic, we denote by $\mathfrak{t}(G)$ the maximal order of flatness of $G$ along nonsingular analytic discs, that is, \[
\mathfrak{t}(G)=\sup_{\psi} \nu_0(G\circ \psi), 
\]
where $\psi: \mathbb{D}\rightarrow \mathbb{P}^{n-1}$ is holomorphic and satisfies $\psi'(0)\neq 0$. 
\end{dfn}

We can finally state our sharp subellipticity theorem. 

\begin{thm}[Sharp subelliptic estimates]\label{thm:main}
	
	Let $\varphi=|F|^2$ be a plush function in the class $\mathcal{H}_{n,d}$, $H$ its Levi form (see \eqref{eq:H}), and $\Omega$ the associated dilation-invariant special domain. Assume moreover that \begin{equation}\label{eq:one_dim_kernel}\dim \ker H(p)\leq 1 \qquad \forall p\in \C^n\setminus\{0\}.\end{equation} 
	
	Then the $\dbar$-Neumann problem on $\Omega$ satisfies a subelliptic estimate at the origin of order $s=(2\max\{d, \mathfrak{t}(\widetilde{F})\})^{-1}$ and no better one. 
	
	The quantity $2\max\{d, \mathfrak{t}(\widetilde{F})\}$ equals the upper semicontinuous envelope of Catlin's type $\widetilde{T}^1(b\Omega, 0)$ (see Definition \ref{dfn:catlin} and \eqref{eq:catlin_type} below). Hence, \[
	s(\Omega; 0)=\frac{1}{\widetilde{T}^1(b\Omega, 0)}. 
	\] 
\end{thm}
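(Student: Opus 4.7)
The overall plan is to establish two separate facts and then combine them with Catlin's necessary condition (Theorem \ref{thm:catlin_necessity}): (i) $s(\Omega;0)\geq 1/(2m)$ where $m:=\max\{d,\mathfrak{t}(\widetilde F)\}$, and (ii) $\widetilde T^1(b\Omega;0)\geq 2m$. Given (i) and (ii), Catlin's upper bound applied at boundary points $q$ approaching $0$, together with the obvious lower semicontinuity of $s(\Omega;q)$ at $q=0$ (a subelliptic estimate at $0$ restricts to a subelliptic estimate at any nearby $q$), yields $s(\Omega;0)\leq 1/\widetilde T^1(b\Omega;0)\leq 1/(2m)$. Combined with (i), this gives simultaneously $s(\Omega;0)=1/(2m)$ and $\widetilde T^1(b\Omega;0)=2m$.

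To prove (i), I reduce to a spectral gap estimate via Proposition \ref{prp:spectral_gap}, and then, since $\varphi$ is $\C$-homogeneous of degree $2d$, apply Proposition \ref{prp:scaling} with $\delta=1-d/m$ to convert the needed estimate into the single-scale inequality $\E^\varphi(w)\gtrsim R^{-2(1-d/m)}\int_{\C^n}|w|^2e^{-\varphi}$ for $w\in C^\infty_c(\{|z|\leq C^{-1}R\})$ and $R$ large. I localize on $S^{2n-1}$: the hypothesis $\dim\ker H(p)\leq 1$ for $p\neq 0$ makes Proposition \ref{prp:vector_field} applicable at every $p\in S^{2n-1}$, producing an approximate minimal eigenvector field $X_p$, which I extend by the homogeneous scaling to a conical neighborhood of $p$. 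Foliating by integral curves $\zeta\mapsto \exp_p(\zeta X_p)$ and using \eqref{eq:ame_observation}, I get $\Delta(\varphi\circ \exp_p(\zeta X_p))\simeq\lambda_1$; further, the one-dimensional kernel hypothesis keeps $\lambda_2,\ldots,\lambda_n$ uniformly bounded below, so $\lambda_1\simeq|\det J|^2$ via $\det H=|\det J|^2$, reducing each leaf restriction to the modulus squared of a holomorphic function of $\zeta$ that admits a polynomial approximation whose degree and leading coefficient are governed by $\mathfrak{t}(\widetilde F)$ and the radial scaling. Applying the main one-dimensional estimate (Proposition \ref{lem:main_1d_estimate}) on each leaf and aggregating the gains across a finite cover of $S^{2n-1}$ and dyadic annular shells $\{|z|\simeq 2^k\}$, while using \eqref{eq:energy_form_vector} to handle the $n$ vector components of $w$, yields the required factor $R^{-2(1-d/m)}$.

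To prove (ii), I exhibit two families of analytic discs shrinking to boundary points arbitrarily close to $0$. The first (yielding $T^1(b\Omega;0)\geq 2d$) is $\psi_t:D(0,t)\to\C^{n+1}$ defined by $\psi_t(\zeta)=(\zeta v,0)$ with $v\in\C^n$ such that $\varphi(v)\neq 0$: by $2d$-homogeneity, $\mathrm{dist}_{b\Omega}(\psi_t(\zeta))=|\zeta|^{2d}\varphi(v)=O(t^{2d})$ on $D(0,t)$. The second (yielding $T^1(b\Omega;q)\geq 2\mathfrak{t}(\widetilde F)$ at certain $q$ near $0$) starts from a nonsingular disc $\widetilde\psi:\mathbb{D}\to\mathbb{P}^{n-1}$ realizing the flatness $\mathfrak{t}(\widetilde F)$: lifting it to $\widehat\psi:\mathbb{D}\to\C^n\setminus\{0\}$ and rescaling by a nonvanishing holomorphic scalar $\alpha(\zeta)^{1/d}$, I arrange $F\circ(\widehat\psi/\alpha^{1/d})(\zeta)=q_0+O(|\zeta|^{\mathfrak{t}(\widetilde F)})$; a direct computation then gives $\Delta(\varphi\circ (\widehat\psi/\alpha^{1/d}))=O(|\zeta|^{2(\mathfrak{t}(\widetilde F)-1)})$, so Proposition \ref{prp:FvsCatlin} yields $T^1(b\Omega;q)\geq 2\mathfrak{t}(\widetilde F)$ at a boundary point $q$ over $\widehat\psi(0)/\alpha(0)^{1/d}$. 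The dilation $(z,z_{n+1})\mapsto(\epsilon z,\epsilon^{2d}z_{n+1})$ preserves $\Omega$ and translates $q$ arbitrarily close to $0$, giving $\widetilde T^1(b\Omega;0)\geq 2\mathfrak{t}(\widetilde F)$.

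The main obstacle is the middle step of (i): transforming the one-dimensional estimates on the leaves of the $X_p$-foliation into a sharp $n$-dimensional spectral gap with the correct exponent in $R$. This demands the identification of a polynomial $P$ on each leaf whose degree is tied to $\mathfrak{t}(\widetilde F)$ (rather than to the na\"ive bound $n(d-1)=\deg\det J$), careful tracking of how the leading coefficient of $P$ scales with the radius $r$, controlling the overlaps between tubes associated to different points $p\in S^{2n-1}$, and combining the contribution from the ``flat'' $X_p$-direction with the $n-1$ transverse directions, where $H-\lambda_1 I_n$ is uniformly bounded below. The interaction between the algebraic-geometric invariant $\mathfrak{t}(\widetilde F)$ and the analytic gap exponent $A^{2/(d'+1)}$ of Proposition \ref{lem:main_1d_estimate} is precisely what allows the single-scale factor $R^{-2(1-d/m)}$ to come out sharp, and faithfully carrying out this bookkeeping is where the technical weight of the argument lies.
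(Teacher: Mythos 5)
Your proposal follows essentially the same route as the paper: reduce to a single-scale spectral gap via Propositions \ref{prp:spectral_gap} and \ref{prp:scaling}, prove it by foliating a conical neighborhood of each critical point of $\widetilde{F}$ with integral curves of an approximate minimal eigenvector field (Proposition \ref{prp:vector_field}), applying Weierstrass preparation and the main one-dimensional estimate (Proposition \ref{lem:main_1d_estimate}) on each leaf and summing over dyadic shells, and close with Catlin's necessary condition. The paper packages the Catlin-type comparison via the intermediate invariant $\mathfrak{h}_p(\varphi)$ and Propositions \ref{prp:FvsCatlin}, \ref{prp:type_comparison} instead of the explicit disc constructions in your part (ii), but that is a cosmetic reorganization, and you correctly identify that the real technical weight lies in the degree and leading-coefficient bookkeeping of the key lemma.
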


Before passing to the proof of the theorem, we give a couple of remarks that may clarify the scope of its statement. \begin{enumerate}
	\item Let $\pi:\C^n\setminus\{0\}\rightarrow  \mathbb{P}^{n-1}$ be the canonical projection. It is easy to see that the differentials of the mappings $F:\C^n\rightarrow \C^n$ and $\widetilde{F}:\mathbb{P}^{n-1}\rightarrow \mathbb{P}^{n-1}$ at $p$ and $\pi(p)$ respectively have kernels of the same dimension. Thus, hypothesis \eqref{eq:one_dim_kernel} in Theorem \ref{thm:main} can be reformulated as \begin{equation}\label{eq:one_dim_ker_bis}
		\dim \ker d\widetilde{F}(q)\leq 1\qquad \forall q\in \mathbb{P}^{n-1}. 
	\end{equation}
	\item If $n=1$ or $n=2$, then the hypothesis \eqref{eq:one_dim_kernel} is automatically satisfied and $\mathfrak{t}(\widetilde{F})\leq d$.
	\item If $n=3$ or $n=4$, then the hypothesis is generically satisfied in every degree. The invariant $\mathfrak{t}(\widetilde{F})$ may exceed the degree $d$, leading to a more complex behavior of the invariant $s(\Omega, 0)$ on dilation-invariant special domains. 
	\item If $n\geq 5$, no $\varphi\in \mathcal{H}_{n,d}$ satisfies \eqref{eq:one_dim_kernel} at every point $p\neq 0$, that is, there is no domain in dimension $6$ or higher to which our theorem applies. 
\end{enumerate}
Proofs of these facts are in Section \ref{sec:examples}. We prove Theorem \ref{thm:main} by reducing it to a key estimate (Section \ref{sec:key_estimate}), whose proof is given in Sections \ref{sec:local_bound} through \ref{sec:scaling}. 

\subsection{Reduction to a key estimate}\label{sec:key_estimate}

Recall the following fundamental theorem of Catlin, which we formulate using the terminology of Definition \ref{dfn:catlin}. 

\begin{thm}[Catlin's necessary condition for subellipticity, \cite{catlin_necessary}]\label{thm:catlin_necessity}
	Let $\Omega\subset\C^{n+1}$ be smooth and pseudoconvex. Assume that $q\in b\Omega$ is a boundary point and that a subelliptic estimate of order $s$ holds at $q$. If $\Psi$ is a family of analytic discs shrinking to $q$ with order of contact $T(\Psi)$, then $s\leq \frac{1}{T(\Psi)}$. Hence, \[
	s(\Omega, q)\leq \frac{1}{T^1(b\Omega, q)}. 
	\]
\end{thm}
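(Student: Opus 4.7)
\smallskip

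The second assertion follows from the first by taking the supremum over all families $\Psi$ shrinking to $q$: since $T^1(b\Omega;q)=\sup_\Psi T(\Psi)$ by definition, the pointwise bound $s\leq 1/T(\Psi)$ (for every family) forces $s\leq 1/T^1(b\Omega;q)$ for every subelliptic exponent $s$, whence $s(\Omega;q)\leq 1/T^1(b\Omega;q)$. The heart of the matter is thus the first assertion, which I would argue by contradiction: assume that a subelliptic estimate of order $s$ holds at $q$ and that $s>1/T(\Psi)$ for some family $\Psi=\{\psi_t\}_{t\in(0,1]}$; the plan is to build a one-parameter family of test $(0,1)$-forms $u_t$, supported in the neighborhood $V$ of $q$ of \eqref{eq:subelliptic} and lying in $\mathrm{dom}(\dbar^*)$, such that
\[
\frac{\lVert u_t\rVert_s^2}{\lVert\dbar u_t\rVert^2+\lVert\dbar^*u_t\rVert^2+\lVert u_t\rVert^2}\longrightarrow +\infty\qquad(t\to 0^+),
\]
contradicting \eqref{eq:subelliptic}.

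The test forms should concentrate near the analytic disc $\psi_t$. I would use H\"ormander's weighted $L^2$-technique on $\Omega\cap V$ (pseudoconvex) to produce a holomorphic peak function $f_t\in\mathcal{O}(\Omega\cap V)$ essentially localized in a polydisc of radius $t$ around $\psi_t(0)$, with $f_t(\psi_t(0))\asymp 1$ and $L^2$-mass $\asymp t^{n+1}$. I would then set
\[
u_t(z)=\chi_t(z)\,f_t(z)\,e^{i\xi(t)\,\mathrm{Re}(z_{n+1})}\,\overline{\omega}_j(z),
\]
where $\overline{\omega}_j$ is one of the tangential $(0,1)$-basis forms dual to the $L_j$ of \eqref{eq:L_j} (so that the "complex normal" component vanishes on $b\Omega$ and $u_t\in\mathrm{dom}(\dbar^*)$), $\chi_t$ is a smooth cutoff at scale $t$, and $\xi(t)\to+\infty$ is a frequency parameter to be optimized. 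Holomorphicity of $f_t$ means that $\dbar u_t$ only picks up $\dbar\chi_t$ plus the oscillatory factor, and $\dbar^*u_t$ is controlled by $|\nabla\chi_t|\cdot|f_t|$ together with a term of size $\xi(t)$ coming from the transverse oscillation acting against the Levi form of $b\Omega$. The crucial geometric input is the hypothesis $\mathrm{dist}_{b\Omega}(\psi_t(\zeta))=O(t^{T(\Psi)})$, which forces the boundary curvature seen by the oscillation, averaged over a tube of diameter $t$ about the disc, to be $O(t^{2T(\Psi)-2})$ rather than $O(1)$.

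Careful bookkeeping should then yield $\lVert u_t\rVert^2\asymp t^{2n+1}$, an energy bound
\[
\lVert\dbar u_t\rVert^2+\lVert\dbar^*u_t\rVert^2+\lVert u_t\rVert^2\lesssim t^{2n+1}\bigl(t^{-2}+\xi(t)^2\,t^{2T(\Psi)}\bigr),
\]
and a Sobolev lower bound $\lVert u_t\rVert_s^2\gtrsim\xi(t)^{2s}\,t^{2n+1}$ (arising from the large vertical frequency $\xi(t)$). Choosing $\xi(t)=t^{-T(\Psi)}$ balances the two error terms in the energy, both of which become $\asymp t^{2n-1}$, and the subelliptic estimate collapses to $t^{-2sT(\Psi)+2n+1}\lesssim t^{2n-1}$, i.e.\ $t^{2-2sT(\Psi)}\lesssim 1$; as $t\to 0^+$ this forces $s\leq 1/T(\Psi)$, contradicting our assumption.

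The main obstacle is the precise construction of the peak function $f_t$: one needs sharp $L^\infty$-control at the peak compatible with an $L^2$-estimate on a tube around $\psi_t$ whose pseudoconvexity may degenerate along the disc. This is where H\"ormander's weighted estimate with a weight adapted to the scale $t$ (and essentially forcing pseudoconvexity via addition of $|z-\psi_t(0)|^2/t^2$) must be carried out carefully. A secondary technical point is the verification that the correction needed to place $u_t$ in $\mathrm{dom}(\dbar^*)$ — typically of the form $r\cdot g_t\,\overline{\omega}_{n+1}$ with $r$ a defining function — contributes only lower-order terms in $t$ to both energy and Sobolev norm, leaving the exponent count above intact.
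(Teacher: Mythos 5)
The paper does not prove this statement at all: Theorem \ref{thm:catlin_necessity} is quoted verbatim from Catlin \cite{catlin_necessary}, so your sketch has to be measured against Catlin's original argument. Your overall numerology is the right one (tangential scale $t$, frequency $\xi(t)\simeq t^{-T(\Psi)}$, cutoff error $t^{-1}$, Sobolev gain $\xi^{2s}$, conclusion $s\leq 1/T(\Psi)$), and the deduction of $s(\Omega;q)\leq 1/T^1(b\Omega;q)$ by taking suprema is fine. But there is a genuine gap at the core of the construction: the phase $e^{i\xi\,\re(z_{n+1})}$ is not annihilated by $\dbar$. Since $\re(z_{n+1})=\tfrac12(z_{n+1}+\overline z_{n+1})$, one has $\dbar\, e^{i\xi \re (z_{n+1})}=\tfrac{i\xi}{2}e^{i\xi\re( z_{n+1})}d\overline z_{n+1}$, and because your form has only tangential components $\overline\omega_j$ ($j\leq n$), the resulting $d\overline z_{n+1}\wedge\overline\omega_j$ component of $\dbar u_t$ survives with size $\xi(t)\lVert u_t\rVert$, with no Levi-form or order-of-contact damping whatsoever. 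The hypothesis $\mathrm{dist}_{b\Omega}(\psi_t(\zeta))=O(t^{T})$ controls the distance of the discs to the boundary; it does not control the complex Hessian or $|\partial\varphi|$ on a solid tube, so your claimed bound $\xi^2t^{2T}$ for this term (via ``curvature averaged over the tube is $O(t^{2T-2})$'') is unjustified. With the term taken at its actual size the energy is $\gtrsim \xi(t)^2\lVert u_t\rVert^2$, and your own exponent count then gives only $\xi^{2s}\lesssim \xi^2+t^{-2}$, i.e.\ $s\leq 1$, which is no contradiction in the relevant range $1/T(\Psi)<s\leq 1$.

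The repair is exactly what Catlin (and Kohn, in the two-dimensional sharpness argument) does: use the holomorphic phase $e^{i\xi z_{n+1}}$ (in suitable local coordinates), which $\dbar$ kills, and whose modulus $e^{-\xi\,\im(z_{n+1})}$ also provides the localization at the normal scale $1/\xi$ without a sharp normal cutoff, so no $t^{-T}$ enters through cutoff derivatives. The order-of-contact hypothesis is then used precisely to guarantee that this exponential stays bounded below on a tube around $\psi_t$ when $\xi\simeq t^{-T}$, so that $\lVert u_t\rVert^2$ does not degenerate. The genuinely delicate term is the one of size $\xi\,|\partial\varphi|$ produced when the tangential fields $L_j$ (hence $\dbar^*$) hit the phase; showing that the flatness of the boundary along the discs tames this term is the technical heart of Catlin's proof, and it is not addressed by your sketch -- whereas the H\"ormander peak function $f_t$, on which you spend most of your effort, is not the essential difficulty and can largely be dispensed with. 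As written, the proposal does not assemble into a valid family of test forms, so the contradiction argument does not go through without the holomorphic-phase construction and the accompanying estimate on the $\xi\,|\partial\varphi|$ term.
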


Since $s(\Omega, q)$ is lower semicontinuous in $q$, we are allowed to replace $T^1(b\Omega, q)$ in the above estimate with its upper semicontinuous envelope $\widetilde{T}^1(b\Omega, q)$, i.e., the smallest upper semicontinuous function larger than $T^1(b\Omega, q)$:  \begin{equation}\label{eq:catlin_type}
	\widetilde{T}^1(b\Omega, q)=\lim_{\epsilon\rightarrow 0}\sup_{|q'-q|<\epsilon} T^1(b\Omega, q').
\end{equation}
Indeed, one may directly see that the resulting inequality \[
s(\Omega, q)\leq \frac{1}{\widetilde{T}^1(b\Omega, q)}
\]
follows from Theorem 3 of \cite{catlin_necessary}. 

The following proposition compares the "regularized" Catlin's $1$-type $\widetilde{T}^1(b\Omega, 0)$ with the invariants of Definition \ref{def:h_p} and Definition \ref{def:flatness}, when $\Omega$ is a dilation-invariant special domain. 

\begin{prp}[$\mathfrak{h}_p(\varphi)$ vs.~Catlin's $1$-type on dilation-invariant special domains]\label{prp:type_comparison}
	Let $\varphi\in \mathcal{H}_{n,d}$. We have
\[
	2\max\{d, \mathfrak{t}(\widetilde{F})\}=\sup_{p\in \C^n}\mathfrak{h}_p(\varphi)+2\leq \widetilde{T}^1(b\Omega, 0).
\]
\end{prp}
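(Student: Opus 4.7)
The proof splits into two parts: the identity $2\max\{d,\mathfrak{t}(\widetilde F)\}=\sup_p\mathfrak h_p(\varphi)+2$, which is a direct computation exploiting $\varphi=|F|^2$ with $F$ homogeneous of degree $d$, and the inequality $\sup_p\mathfrak h_p(\varphi)+2\leq \widetilde T^1(b\Omega;0)$, which combines Proposition~\ref{prp:FvsCatlin} with a $\C^\times$-scaling argument.

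For the identity, fix a nonsingular analytic disc $\psi:\mathbb{D}\to\C^n$ through $p$ and let $G:=F\circ\psi$. Since $G$ is holomorphic, $\Delta(\varphi\circ\psi)=4|G'|^2$, hence $\mathfrak h_p(\varphi)=2\sup_\psi\nu_0(G')$. If $p=0$, write $\psi(\zeta)=\zeta w(\zeta)$ with $w(0)=v\neq 0$; the degree-$d$ homogeneity of $F$ gives $G(\zeta)=\zeta^d F(w(\zeta))$, and since $F(v)\neq 0$ by condition (3) of Definition~\ref{def:homog_special}, $\nu_0(G')=d-1$ exactly, so $\mathfrak h_0(\varphi)=2d-2$. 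If $p\neq 0$, then $q:=F(p)\neq 0$; decompose $G(\zeta)=\alpha(\zeta)q+N(\zeta)$ with $N(\zeta)\perp q$ pointwise and $\alpha(0)=1$. Discs with $\psi'(0)\in\C p$ contribute only $\nu_0(G')=0$ (a direct expansion using homogeneity), so one may restrict to $\psi'(0)\notin\C p$, in which case $\widetilde\psi=[\psi]$ is a nonsingular disc in $\mathbb P^{n-1}$. In the affine chart at $[q]$, the composition $\widetilde F\circ\widetilde\psi$ is represented by $N/\alpha$, and $\alpha(0)=1$ gives $\nu_0(\widetilde F\circ\widetilde\psi)=\nu_0(N)$. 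Thus
\[
\nu_0(G')=\nu_0(G-q)-1=\min\{\nu_0(\alpha-1),\nu_0(N)\}-1\leq \nu_0(N)-1\leq \mathfrak t(\widetilde F)-1,
\]
giving the upper bound $\mathfrak h_p(\varphi)\leq 2\mathfrak t(\widetilde F)-2$. For the matching lower bound, I would modify the lift $\psi\mapsto h\cdot\psi$ by a nonvanishing holomorphic $h$: the new coefficient of $q$ becomes $h^d\alpha$, and the normalization $\alpha(0)=1$ permits extracting $d$-th roots to solve $h^d\alpha\equiv 1\pmod{\zeta^M}$ for any $M$. Choosing $M>\nu_0(N)$ yields $\nu_0(G'_{\mathrm{new}})=\nu_0(N)-1$, and taking $\widetilde\psi$ realizing $\mathfrak t(\widetilde F)$ gives $\sup_p\mathfrak h_p(\varphi)\geq 2\mathfrak t(\widetilde F)-2$. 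Combining the two cases yields the identity.

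For the inequality, Proposition~\ref{prp:FvsCatlin} gives $\mathfrak h_p(\varphi)+2\leq T^1(b\Omega;q_p)$ with $q_p=(p,i\varphi(p))$. The key observation is that the dilation $\Phi_r:(z,z_{n+1})\mapsto(rz,r^{2d}z_{n+1})$ ($r>0$) is a biholomorphism of $\C^{n+1}$ preserving $\Omega$, thanks to $\varphi(rz)=r^{2d}\varphi(z)$. Since Catlin's $1$-type is a biholomorphic invariant of germs, $T^1(b\Omega;q_p)=T^1(b\Omega;\Phi_r(q_p))$, and $\Phi_r(q_p)=(rp,i\varphi(rp))\to 0$ as $r\to 0^+$. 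Hence for every $\epsilon>0$, $T^1(b\Omega;q_p)\leq\sup_{|q'|<\epsilon}T^1(b\Omega;q')$, and letting $\epsilon\to 0$ yields $T^1(b\Omega;q_p)\leq \widetilde T^1(b\Omega;0)$. Taking $\sup_p$ completes the proof.

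The main obstacle is the change-of-lift bookkeeping: one must simultaneously verify that nonsingularity of $\psi$ is preserved (using $\psi'(0)\notin\C p$), that $\nu_0(N)$ is unchanged under $G\mapsto h^d G$ (since $h(0)\neq 0$), and that $\alpha(0)=1$ permits solving the $d$-th root equation to arbitrary order. Once the equivalence between projective flatness of $\widetilde F\circ\widetilde\psi$ and vanishing of $F\circ\psi$ transverse to $F(p)$ is in place, the scaling argument for the inequality is routine.
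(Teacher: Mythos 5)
Your proof is correct and follows essentially the same strategy as the paper, which reduces the identity to showing $\sup_{\psi}\nu_0(F\circ\psi)=\mathfrak{t}(\widetilde{F})$ over transverse discs and constructs a matching lift via a $d$-th root, and then derives the inequality by moving germs to the origin via homogeneity. Your orthogonal decomposition $G=\alpha q+N$ with $q^\perp$-component is an equivalent packaging of the paper's affine chart $(w,\zeta)\mapsto(Q(w),\zeta^d R(w))$, and the modification $\psi\mapsto h\psi$ with $h=\alpha^{-1/d}$ is the same device as the paper's lift $\psi(t)=(w(t),R(w(t))^{-1/d})$ (note that since $\alpha(0)=1$ you can solve $h^d\alpha\equiv 1$ \emph{exactly} with a holomorphic $d$-th root; the ``$\bmod\ \zeta^M$'' is unnecessary). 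For the inequality, your use of the anisotropic biholomorphism $\Phi_r$ of $\C^{n+1}$ preserving $\Omega$ and the biholomorphic invariance of $T^1$ is a clean variant of the paper's observation that $\mathfrak{h}_p(\varphi)$ is constant along $\C^\times$-orbits so its supremum is already attained on $B(0,\epsilon)$; the two arguments are interchangeable, yours perhaps making the invariance mechanism more explicit.
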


\begin{proof}
	
By homogeneity, $\sup_{p\in B(0,\epsilon)} \mathfrak{h}_p(\varphi) = \sup_{p\in \C^n}\mathfrak{h}_p(\varphi)$ for every $\epsilon>0$. Thus, the inequality follows from Proposition \ref{prp:FvsCatlin}. 

If $\psi$ is a nonsingular analytic disc in $\C^n$, then \[
\Delta(\varphi\circ \psi) = \sum_{\ell=1}^n\left|\frac{\partial (F_\ell\circ \psi)}{\partial \zeta}\right|^2.
\]
It follows that the largest $m$ such that $\Delta(\varphi\circ \psi)=O(|\zeta|^m)$ equals $2\nu_0(F\circ \psi)-2$. If the disc is through the origin, then $\nu_0(F\circ \psi)=d$. If $\psi'(0)$ is parallel to $\psi(0)$, then $\nu_0(F\circ \psi)=1$. Thus, it is enough to show that \[
\sup_{\psi}\nu_0(F\circ \psi) = \mathfrak{t}(\widetilde{F}), 
\]
where the supremum is over all analytic discs $\psi$ such that $\psi(0)\neq 0$ and $\psi'(0)\notin \C\psi(0)$. Denoting $\pi:\C^n\setminus \{0\}\rightarrow \mathbb{P}^{n-1}$ the canonical projection, it is clear that $\nu_0(F\circ \psi)\leq \nu_0(\pi\circ F\circ \psi)=\nu_0(\widetilde{F}\circ \widetilde{\psi})$, where $\widetilde\psi = \pi \circ \psi$. On the other hand, we claim that every nonsingular analytic disc $\widetilde{\psi}$ in $\mathbb{P}^{n-1}$ has a lift $\psi$ in $\C^n$ such that $\nu_0(F\circ \psi)=\nu_0(\widetilde{F}\circ \widetilde{\psi})$. To verify the claim, one may assume without loss of generality that $\psi(0)=(0,\ldots, 0,1)$ and $F(\psi(0,\ldots, 0,1))=(0,\ldots, 0,1)$ and work in local coordinates \[
(w,\zeta):=(w_1,\ldots, w_{n-1}, \zeta)=(z_1z_n^{-1}, \ldots, z_{n-1}z_n^{-1}, z_n)\]
near that point, so that $\pi$ becomes the projection onto the first $n-1$ components. The mapping $F$ takes the form $(w,\zeta)\mapsto (Q(w), \zeta^dR(w))$, where $Q$ and $R$ are holomorphic. If the nonsingular analytic disc $\widetilde\psi$ is expressed as $w(t)$ in coordinates, then it is enough to set $\psi(t)=(w(t), R(w(t))^{-\frac{1}{d}})$. 
\end{proof}

We now state the key inequality needed for the proof of Theorem \ref{thm:main}. 

\begin{lem}[Key Lemma]\label{lem:main_estimate}
	Let $\varphi\in \mathcal{H}_{n,d}$. 
	\begin{enumerate}
\item 	 For every $C$, there exists $\epsilon>0$ such that \begin{equation}\label{unit_ball_homog}
		\E^\varphi(w)\geq\epsilon \int_{\{|z|\leq C\}} |w|^2e^{-2\varphi}\qquad\forall w\in C^\infty_c(\C^n). 
	\end{equation}
	
\item 	Let $p\in \C^n\setminus\{0\}$ be such that  $\dim \ker H(p)\leq 1$. Then there exists $C<+\infty$ and an $\R^+$-invariant neighborhood $V$ of $p$ such that   \begin{equation}\label{eq:main_estimate}
	\E^\varphi(w)\geq C^{-1} \int_{V\cap \{|z|\geq C\}} |z|^{2\left(\frac{2d}{\mathfrak{h}_p(\varphi)+2}-1\right)}\, |w|^2e^{-2\varphi}
	\end{equation} for every $w\in C^\infty_c(\C^n)$.
\end{enumerate}
\end{lem}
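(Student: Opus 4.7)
The plan for both parts is to apply the main one-dimensional estimate (Proposition \ref{lem:main_1d_estimate}) along carefully chosen families of complex discs in $\C^n$ and then to assemble the resulting leafwise bounds by a Fubini-type integration. Along any holomorphic disc $\psi(\zeta)$ in $\C^n$, the Hermitian-sum-of-squares identity
\[
\Delta_\zeta(\varphi\circ\psi)\;=\;4\sum_{\ell=1}^n|\partial_\zeta(F_\ell\circ\psi)|^2
\]
gives the necessary form for the 1d estimate, once we bound this below by a single squared polynomial.

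For part (1), I would foliate $\C^n\setminus\{0\}$ by the complex lines through the origin. Along $\C v$ with $|v|=1$, homogeneity gives $\varphi(\zeta v)=|\zeta|^{2d}|F(v)|^2$ and $\Delta_\zeta\varphi(\zeta v)=|P_v(\zeta)|^2$ with $P_v(\zeta)=2d e^{i\alpha_v}|F(v)|\zeta^{d-1}$, a single monomial of degree $d-1$. Since $F\in\mathcal{H}_{n,d}$ vanishes only at $0$, compactness of the unit sphere forces $\inf_{|v|=1}|F(v)|\geq c_F>0$, so the leading coefficient of $P_v$ is uniformly bounded below in $v$. Rescaling $\zeta$ so that $\{|\zeta|\leq C\}$ becomes the unit disc, Proposition \ref{lem:main_1d_estimate} applies with constants uniform in $v$, and disintegrating $dz$ over $\mathbb{P}^{n-1}$ (i.e., using polar-type coordinates $z=\zeta v$) and integrating the leafwise bounds yields \eqref{unit_ball_homog}.

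For part (2), since $\dim\ker H(p)\leq 1$, Proposition \ref{prp:vector_field} delivers an approximate minimal eigenvector field $X$ near $p$, which we may choose so that the order of vanishing $m_0$ of $\lambda_1\circ\exp_p(\zeta X)$ at $0$ is as large as possible (so $m_0\leq\mathfrak{h}_p(\varphi)$, with equality required for sharpness). Along $\psi_p(\zeta):=\exp_p(\zeta X)$ we have $\Delta_\zeta(\varphi\circ\psi_p)\simeq\lambda_1\circ\psi_p=O(|\zeta|^{m_0})$ by \eqref{eq:ame_observation}, and the SOS identity above lets us extract a single holomorphic factor $g_p(\zeta)\sim c_p\zeta^{m_0/2}$ bounding the LHS below by $|g_p(\zeta)|^2$. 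Proposition \ref{lem:main_1d_estimate} then yields a leafwise spectral gap $\simeq |c_p|^{4/(m_0+2)}$; a transverse Fubini over the remaining $(n-1)$ directions---along which $H(p)$ is strictly positive by the kernel hypothesis---produces a single-scale bound near $p$. To propagate to the $\R^+$-invariant neighborhood $V$, I would dilate by $\lambda>0$ and use homogeneity: $c_{\lambda p}$ scales as $\lambda^{d-1-m_0/2}$, so the leafwise spectral gap at $\lambda p$ becomes $\lambda^{2(2d-m_0-2)/(m_0+2)}=|z|^{2(\frac{2d}{m_0+2}-1)}$, matching the weight in \eqref{eq:main_estimate}.

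The main obstacle will be the transverse Fubini step combined with the uniform tracking of constants under dilation: passing from the family of leafwise one-dimensional bounds along integral curves of $X$ to a genuine $n$-dimensional lower bound for $\E^\varphi$ requires the transverse contributions to remain positive and stable under $z\mapsto\lambda z$, which $\dim\ker H(p)\leq 1$ makes possible (the other $n-1$ eigenvalues of $H(p)$ being bounded away from zero) but requires care. Secondarily, the exponent $2d/(\mathfrak{h}_p(\varphi)+2)$ must be sharp to match Catlin's upper bound via Proposition \ref{prp:type_comparison}, which forces $m_0$ to be realized exactly as $\mathfrak{h}_p(\varphi)$, in turn demanding a careful construction of $X$ attaining the supremum in Definition \ref{def:h_p}.
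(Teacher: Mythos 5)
Your overall scheme---slice along holomorphic discs, invoke Proposition \ref{lem:main_1d_estimate}, Fubini, then dilate---is indeed the right template for part~(2), but there is a genuine and fatal flaw in your plan for part~(1), and two smaller but real gaps in part~(2).

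For part~(1): slicing along the complex lines through the origin does \emph{not} produce a lower bound for $\E^\varphi$. Along $\zeta\mapsto\zeta v$, the one-dimensional Laplacian is $\Delta_\zeta(\varphi(\zeta v))=(H(\zeta v)v,v)$, and the one-dimensional estimate then controls $\int|\partial_{\bar\zeta}w|^2 e^{-2\varphi}+\int(Hv,v)|w|^2 e^{-2\varphi}$. But the energy form $\E^\varphi$ contains the \emph{smallest} eigenvalue $\lambda_1$, and $(Hv,v)\geq\lambda_1$ always; along the radial direction the inequality is generally strict (indeed $\lambda_1(\zeta v)$ can vanish while $(H(\zeta v)v,v)\simeq|\zeta|^{2d-2}\varphi(v)$ does not). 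So you would be bounding a quantity \emph{larger} than $\E^\varphi$ from below, which says nothing about $\E^\varphi$ itself. A slicing argument can only give a lower bound on $\E^\varphi$ if the slicing direction is an approximate minimal eigenvector (so $(H\psi',\psi')\lesssim\lambda_1$), and the radial direction is typically not one. The paper instead picks $p\in S^{2n-1}$ with $\lambda_1(p)>0$ (such $p$ exists because $\det J\not\equiv 0$), foliates a small tube by parallel copies of $\mathbb{D}\cdot p$, and applies the \emph{basic} $\dbar$-UP (Lemma \ref{lem:dbar_unc}) with potential $V=\lambda_1$ directly, exploiting that $\lambda_1$ is bounded below on the outer half of each disc. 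No one-dimensional Laplacian computation is needed, and Proposition \ref{lem:main_1d_estimate} does not enter.

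For part~(2): the structure of your argument matches the paper, but two points are elided. First, ``the SOS identity lets us extract a single holomorphic factor $g_p(\zeta)\sim c_p\zeta^{m_0/2}$'' is where the hypothesis $\dim\ker H(p)=1$ earns its keep, and it should not be glossed: a Hermitian sum of $n$ squares is not in general comparable to a \emph{single} modulus squared, and in particular $\lambda_1\circ\psi_p$ need not be one. The one-dimensional kernel gives $\lambda_1\simeq\det H=|\det J|^2$ near $p$, a single modulus squared, and the Weierstrass Preparation Theorem applied to $\det J\circ\Theta$ (not to an SOS) produces a bona fide monic polynomial in $\zeta$ of degree $m$, which in general is \emph{not} the monomial $\zeta^{m_0/2}$ you posit; this is exactly why the full strength of Proposition \ref{lem:main_1d_estimate} (and of the Sublevel Set Lemma behind it) is needed, rather than the much simpler monomial case. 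Second, your worry that ``$m_0$ must be realized exactly as $\mathfrak{h}_p(\varphi)$'' is a misconception: the proof only uses $2m\leq\mathfrak{h}_p(\varphi)$ (which is automatic from \eqref{eq:VvsF}), because a \emph{smaller} $m$ yields a \emph{larger} spectral gap $\gamma^{2k(d/(m+1)-1)}\geq\gamma^{2k(2d/(\mathfrak{h}_p(\varphi)+2)-1)}$ and hence a stronger inequality than \eqref{eq:main_estimate}; the sharpness of the theorem comes from the supremum over $p$ and Catlin's upper bound, not from a pointwise equality $2m=\mathfrak{h}_p(\varphi)$.
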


Before delving into the proof of Lemma \ref{lem:main_estimate}, let us show how it implies Theorem \ref{thm:main}. 

\begin{proof}[Proof of Theorem \ref{thm:main}]
By part (2) of Lemma \ref{lem:main_estimate}, every $p\in \sfera^{2n-1}$ admits an $\R^+$-invariant open neighborhood $V_p$ and a constant $C_p>0$ such that \[
	\E^\varphi(w)\geq C_p^{-1} \int_{V_p\cap \{|z|\geq C_p\}} |z|^{2\left(\frac{2d}{\mathfrak{h}_p(\varphi)+2}-1\right)}\, |w|^2e^{-2\varphi}\qquad \forall w\in C^\infty_c(\C^n). 
	\]
	A compactness argument gives then a constant $C$ such that \[
	\E^\varphi(w)\geq C^{-1} \int_{|z|\geq C} |z|^{-2\left(1-2ds\right)} \, |w|^2e^{-2\varphi}\qquad \forall w\in C^\infty_c(\C^n), 
	\]
	where $s=\frac{1}{\sup_{p}\mathfrak{h}_p(\varphi)+2}$. Combining this estimate and part (1) of Lemma \ref{lem:main_estimate}, we see that the sufficient condition \eqref{eq:sufficient_condition} with this value of $s$ is satisfied. Thus, \[
	s(\Omega; 0)\geq \frac{1}{\sup_{p}\mathfrak{h}_p(\varphi)+2}. 
	\] 
By Catlin's necessary condition and Proposition \ref{prp:FvsCatlin}, we have the chain of estimates \[
	\widetilde{T}^1(b\Omega, 0) \leq \frac{1}{s(\Omega; 0)}\leq \sup_{p\in \C^n}\mathfrak{h}_p(\varphi)+2=2\max\{d, \mathfrak{t}(\widetilde{F})\} \leq \widetilde{T}^1(b\Omega, 0). 
	\]
	
	The proof of Theorem \ref{thm:main} is complete. 
	\end{proof}

We now turn to the proof of Lemma \ref{lem:main_estimate}. 

\subsection{Proof of the local energy bound}\label{sec:local_bound}
 
We now prove item (1) in Lemma \ref{lem:main_estimate}, which is a simple application of the basic $\dbar$-UP (Lemma \ref{lem:dbar_unc}). 

Pick $p\in \sfera^{2n-1}$ such that $\lambda_1(p)>0$. By continuity and $\C$-homogeneity there exists a small constant $\epsilon>0$ such that $\lambda_1$ does not vanish on \[
D':=\{z\in \C^n\colon\ 1/2\leq |(z,p)|\leq 1,\, |z-(z,p)p|\leq \epsilon\}.
\]  Consider the set \[
D:=\{z\in \C^n\colon\ |(z,p)|\leq 1,\, |z-(z,p)p|\leq \epsilon\}.
\]
Cutting $D$ into discs parallel to $p$ and applying the basic $\dbar$-UP to each disc, one gets \begin{eqnarray*}
	\int_D|\nabla^{0,1}w|^2+\int_D\lambda_1 |w|^2\gtrsim \int_D|w|^2\geq \int_{|z|\leq \epsilon}|w|^2.
\end{eqnarray*}
Applying this estimate to $w(\epsilon^{-1} Cz)$, exploting the homogeneity of $\lambda_1$ one finds that \begin{eqnarray*}
	\int_{\epsilon^{-1}CD}|\nabla^{0,1}w|^2+\int_{\epsilon^{-1}CD}\lambda_1 |w|^2\gtrsim_C \int_{|z|\leq C}|w|^2.
\end{eqnarray*}
Since $e^{-2\varphi}\simeq_C 1$ on $\epsilon^{-1}CD$ and $\{|z|\leq C\}$, we finally obtain \eqref{unit_ball_homog}. 

Notice that all we used about $\varphi$ for the local energy bound is its $\C$-homogeneity.

Let us move to the proof of part (2) of Lemma \ref{lem:main_estimate}.  
 
\subsection{Preparation}\label{sec:preparation}

The case $\dim \ker H(p)= 0$ is trivial. In fact, $\mathfrak{h}_p(\varphi)=0$ in this case and the conclusion follows from the fact that $\lambda_1(z)\gtrsim |z|^{2d-2}$ on an $\R^+$-invariant neighborhood of $p$. From now on, we assume that $\dim\ker H(p)=1$. 

By Proposition \ref{prp:vector_field}, there exists an approximate minimal eigenvector $X$ defined on an open neighborhood $W$ of $p$. 

Let $B$ be a small $\C$-codimension one ball perpendicular to $X(p)$, that is, 
\[
B=\{p+a\colon\ a\perp X(p), \, |a|<\varepsilon\}.
\]
Denote by $\Theta(z';\zeta)$ the solution of the complex ODE \[
\begin{cases}
	&\frac{\partial\Theta}{\partial\zeta}(z';\zeta) = X(\Theta(z';\zeta))\\
	&\Theta(z';0)=z'\in B
\end{cases}.
\]  If $B$ and $\tau>0$ are small enough, we obtain a holomorphic mapping \[\Theta: B\times D(0,\tau)\rightarrow W.\]
The transversality of $\frac{\partial\Theta}{\partial\zeta}(p;0)=X(p)$ and $B$ ensures, shrinking $B$ and $\tau$ enough, that $\Theta$ induces a biholomorphism onto its (open) image. We also shrink $W$ so that $\Theta$ itself is a biholomorphism. In other words, what we did is foliating a neighborhood of $p$ with analytic discs whose complex tangent vectors are approximate minimal eigenvectors of $H$. 

We now consider the restriction of the plurisubharmonic function $\varphi$ to these analytic discs. The same computation as in \eqref{eq:ame_observation} gives \[
\Delta_\zeta (\varphi\circ \Theta(z'; \zeta)) \simeq\lambda_1(\Theta(z'; \zeta)).
\]
Since $\dim\ker H(p)=1$, we have $\lambda_1(z)\simeq \det H(z)=|\det J(z)|^2$ near $p$. Hence, $\lambda_1(\Theta(z'; \zeta))$ is comparable to the modulus squared of the holomorphic function \[\mathcal{J}(z'; \zeta):=\det J(\Theta(z'; \zeta)).\] Notice that $\mathcal{J}(p;0)=0$. By the Weierstrass Preparation Theorem we have two cases to consider: \begin{itemize}
	\item[(a)] $\zeta\mapsto \mathcal{J}(p;\zeta)$ vanishes identically on $D(0,\tau)$; 
	\item[(b)] $\mathcal{J}(z';\zeta)$ equals, modulo a unit of the ring of germs of holomorphic functions at $(p;0)$, a Weierstrass polynomial \[
	\zeta^m+\sum_{k=1}^m c_k(z')\zeta^{m-k}, 
	\]
	where each $c_j(z')$ is a holomorphic germ vanishing at $p$. 
	\end{itemize}

In case (a), $\ker J$ is nontrivial on the whole range of the analytic disc $\zeta\mapsto \Theta(p;\zeta)$. Since $\frac{\partial\Theta}{\partial\zeta}(z';\zeta)$ lies in the kernel of $J(\Theta(z';\zeta))$ for every point at which the latter is nontrivial, the chain rule shows that $F\circ  \Theta(p;\zeta)$ is independent of $\zeta$. (Viceversa, it is easily seen that if $F$ is constant on the disc through $p$ then necessarily $\mathcal{J}(p;\zeta)$ vanishes identically in $\zeta$.) Thus, case (a) above may be ruled out because $F$ has finite fibers, as a consequence of Definition \ref{def:homog_special}.  

Notice that the degree $m\geq 1$ of the Weierstrass polynomial clearly satisfies (cf.~Definition \ref{dfn:ame}) $2m \leq \mathfrak{h}_p(\varphi)$ (recall \eqref{eq:VvsF}). 

Let us record in a proposition what we obtained so far. 

\begin{prp}[Convenient foliation of a neighborhood of $p$]\label{prp:preparation}
Under the assumptions of Lemma \ref{lem:main_estimate} there is a biholomorphism \[
\Theta: B^{n-1}\times \mathbb{D}\rightarrow W, 
\]
where $B^{n-1}\subseteq \C^{n-1}$ is the unit ball, such that $\Theta(0;0)=p$ and\begin{equation}\label{eq:preparation}
	\Delta_\zeta (\varphi\circ \Theta(z'; \zeta)) \simeq \lambda_1(\Theta(z'; \zeta))\simeq |\zeta^m+\sum_{k=1}^m c_k(z')\zeta^{m-k}|^2. 
\end{equation}
The coefficients $c_k$ are holomorphic functions on $B^{n-1}$ that vanish at $z'=0$, and \begin{equation}\label{eq:2mleqv}2m\leq \mathfrak{h}_p(\varphi).\end{equation} 
	\end{prp}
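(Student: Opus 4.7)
My plan is to organize the construction already sketched in Section~\ref{sec:preparation} into a formal proof, with four main steps.

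First, I would invoke Proposition~\ref{prp:vector_field} to produce a holomorphic approximate minimal eigenvector field $X$ for $\varphi$ on an open neighborhood of $p$; the hypothesis $\dim \ker H(p)\leq 1$ of Lemma~\ref{lem:main_estimate} together with the reduction already carried out (the case $\dim \ker H(p)=0$ having been handled) lets us assume $\dim \ker H(p)=1$, which is the hypothesis of Proposition~\ref{prp:vector_field}. Then I would construct $\Theta$ by flowing: pick a small $\C$-codimension-one ball $B$ through $p$ orthogonal to $X(p)$, and solve the holomorphic ODE $\partial_\zeta \Theta(z';\zeta)=X(\Theta(z';\zeta))$ with $\Theta(z';0)=z'\in B$. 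Standard holomorphic ODE theory gives a holomorphic map $\Theta:B\times D(0,\tau)\to W$, and the transversality of $X(p)$ to $B$ at $(p,0)$ ensures that $d\Theta(p,0)$ is invertible. Shrinking $B$ and $\tau$ and rescaling coordinates, I obtain a biholomorphism $\Theta: B^{n-1}\times \mathbb{D}\to W$ with $\Theta(0;0)=p$.

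Next, I would establish the chain of comparisons in \eqref{eq:preparation}. The first comparison $\Delta_\zeta(\varphi\circ \Theta(z';\zeta))\simeq \lambda_1(\Theta(z';\zeta))$ follows from the same computation as \eqref{eq:ame_observation}: along each leaf $\zeta\mapsto \Theta(z';\zeta)$ the tangent $\partial_\zeta \Theta = X\circ \Theta$ is an approximate minimal eigenvector of $H$. For the second comparison I would use that $H=J^*J$ by \eqref{eq:J^*J}, together with the fact that $\lambda_2(p),\ldots,\lambda_n(p)$ are strictly positive by the assumption $\dim \ker H(p)=1$ and hence remain bounded below on a neighborhood of $p$ by continuity; therefore $\lambda_1(z)\simeq \det H(z)=|\det J(z)|^2$ near $p$, which after pulling back via $\Theta$ equals $|\mathcal{J}(z';\zeta)|^2$.

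The last step is the Weierstrass preparation. Since $\mathcal{J}(p;0)=0$, either $\zeta\mapsto \mathcal{J}(0;\zeta)$ vanishes identically (case (a)) or the Weierstrass Preparation Theorem yields, modulo a unit, a distinguished polynomial $\zeta^m+\sum_{k=1}^m c_k(z')\zeta^{m-k}$ with $c_k$ holomorphic on $B^{n-1}$ vanishing at $z'=0$ (case (b)). The step I expect to be the main obstacle is ruling out case (a); I would argue as follows. If $\mathcal{J}(0;\zeta)\equiv 0$, then $X(\Theta(0;\zeta))\in \ker J(\Theta(0;\zeta))$ for all $\zeta$, so by the chain rule $\partial_\zeta(F\circ \Theta(0;\zeta))=J(\Theta(0;\zeta))\,X(\Theta(0;\zeta))=0$, forcing $F$ to be constant along the analytic arc $\zeta\mapsto \Theta(0;\zeta)$ through $p$. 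But $\varphi\in \mathcal{H}_{n,d}$ and Definition~\ref{def:homog_special} say that $F:\C^n\to \C^n$ is a homogeneous polynomial map with $F^{-1}(0)=\{0\}$; such a map is finite (every fiber is discrete), contradicting the existence of a positive-dimensional fiber at $F(p)$. Finally, the bound $2m\leq \mathfrak{h}_p(\varphi)$ comes from evaluating at $z'=0$: the disc $\zeta\mapsto \Theta(0;\zeta)$ is nonsingular through $p$ and by \eqref{eq:preparation} one has $\Delta_\zeta(\varphi\circ\Theta(0;\zeta))\simeq |\zeta|^{2m}$, so $\mathfrak{h}_p(\varphi)\geq 2m$ by Definition~\ref{def:h_p}.
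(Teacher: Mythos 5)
Your proof is correct and follows the same route as the paper: construct $\Theta$ by flowing along the approximate minimal eigenvector field $X$ from Proposition~\ref{prp:vector_field}, compare $\Delta_\zeta(\varphi\circ\Theta)\simeq\lambda_1\circ\Theta\simeq|\det J\circ\Theta|^2$ using the AME property and the one-dimensionality of $\ker H$, apply Weierstrass preparation to $\mathcal J=\det J\circ\Theta$, rule out identical vanishing along the central disc via finiteness of the fibers of $F$, and read off $2m\leq\mathfrak h_p(\varphi)$ from the central disc. The only minor compression relative to what a fully spelled-out argument requires is the step ``$X(\Theta(0;\zeta))\in\ker J(\Theta(0;\zeta))$,'' which follows because $\det J=0$ forces $\lambda_1=0$ there (the other eigenvalues being bounded below near $p$), whence $|JX|^2\lesssim\lambda_1=0$; the paper elides this at the same level of detail.
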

Notice how we rescaled the small ball $B$ and the disc $D(0,\tau)$ to unit size, as we may (at the cost of modifying the implicit constants). 

\subsection{Scaling}\label{sec:scaling}

We now exploit the fact that the complex Hessian $H(z)$ is homogeneous of degree $2d-2$: \[
(H(Rz)v,v) = R^{2d-2}(H(z)v,v)\qquad \forall R>0,\ z\in \C^n, \ v\in \C^n, 
\]
and that the eigenvalues $\lambda_j(z)$ are $\C$-homogeneous functions of degree $2d-2$. 

By homogeneity, we may assume without loss of generality that the point $p$ in the second item of Lemma \ref{lem:main_estimate} has unit distance from the origin, i.e., $p\in \sfera^{2n-1}$. 

Let $\Theta:B^{n-1}\times \mathbb{D}\rightarrow W$ be the biholomorphism and let \begin{equation}\label{weierstrass}
P_{z'}(\zeta)=\zeta^m+\sum_{k=1}^mc_k(z')\zeta^{m-k}
\end{equation} be the Weierstrass polynomial of Proposition \ref{prp:preparation}. We may assume that every root of $P_{z'}$ is contained in $D\left(0,\frac{1}{2}\right)$ and that $W\subseteq \{1/2< |z|< 3/2\}$. This is easily achieved by a suitable change of variables in $z'$ and $\zeta$. 

We choose $\gamma>1$ and an open spherical cap $K\subset W\cap S^{2n-1}$ centered at $p$ with the property that \begin{equation}\label{cap_inside}
	\{t q\colon \ 1\leq t< \gamma,\ q\in K \}\subseteq W.
\end{equation}
The collection of open sets $\{W_k:=\gamma^kW\}_{k\geq 0}$ has bounded overlapping and covers $V:=\{tq\colon\ t\geq 1, \ q\in K\}$, that is, \begin{equation}\label{bounded_overlapping}
1_V\leq \sum_{k\geq 0} 1_{W_k}\lesssim 1.
\end{equation}

The next thing we are going to do is rescale the biholomorphism $\Theta$ to adapt it to the sets $W_k$. Let \[
\Theta_k(z';\zeta):=\gamma^k\Theta(z';\zeta)\qquad (k\in \N), 
\]
which defines a biholomorphism of $B^{n-1}\times \mathbb{D}$ onto $W_k$. We have the simple change of variables identity \begin{equation}\label{change_of_variables}
\int_{W_k} g\,  d\mathcal{H}_{2n}\simeq \gamma^{2kn}\int_{B^{n-1}}\left(\int_{\mathbb{D}} g\circ\Theta_k(z';\zeta)\,  d\mathcal{H}_2(\zeta)\right)d\mathcal{H}_{2n-2}(z'), 
\end{equation}
for every $g:W_k\rightarrow \C$ (say, nonnegative and measurable). Here $\mathcal{H}_m$ is the $m$-dimensional (Lebesgue, or Hausdorff) measure and the implicit constant is uniform in $k$. 

Now that the stage is set, we come to the proof of \eqref{eq:main_estimate}. The local contribution to our energy form coming from $W_k$ is \[
\E^\varphi_{W_k}(w):=\int_{W_k} |\nabla^{0,1}w|^2e^{-2\varphi}+2\int_{W_k}\lambda_1 |w|^2 e^{-2\varphi}\qquad (w\in C^\infty_c(\C^n)). 
\]
Since $\Theta_k(z';\zeta)=(\Theta_{k,1}(z'; \zeta), \ldots, \Theta_{k,n}(z'; \zeta))$ is holomorphic, we have\[
\frac{\partial}{\partial \overline\zeta}(w\circ \Theta_k)= \sum_{j=1}^n\frac{\partial w}{\partial \overline{z}_j} \circ \Theta_k\, \overline{\frac{\partial \Theta_{k,j}}{\partial\zeta}},\] and Cauchy--Schwarz gives
\[
\left|\frac{\partial}{\partial \overline\zeta}(w\circ \Theta_k)\right|\lesssim \gamma^k |\nabla^{0,1}w|\circ \Theta_k.
\] 
Hence, changing variables with \eqref{change_of_variables}, we get
\begin{eqnarray*}
	\E^\varphi_{W_k}(w)&\simeq & \gamma^{2kn}\int_{B^{n-1}}\int_{\mathbb{D}} |\nabla^{0,1}w|^2\circ \Theta_k\,  e^{-2\varphi\circ\Theta_k}+\gamma^{2kn}\int_{B^{n-1}}\int_{\mathbb{D}}\lambda_1\circ\Theta_k\,  |w\circ \Theta_k|^2 e^{-2\varphi\circ \Theta_k}\\
	&\gtrsim& \gamma^{2k(n-1)}\int_{B^{n-1}}\left\{\int_{\mathbb{D}} \left|\frac{\partial}{\partial \overline\zeta}(w\circ \Theta_k)\right|^2  e^{-2\varphi\circ \Theta_k}+\int_{\mathbb{D}}\gamma^{2k}\lambda_1\circ \Theta_k\,  |w\circ \Theta_k|^2 e^{-2\varphi\circ \Theta_k}\right\}\\
	&=& \gamma^{2k(n-1)}\int_{B^{n-1}}\left\{\int_{\mathbb{D}} \left|\frac{\partial}{\partial \overline\zeta}(w\circ \Theta_k)\right|^2  e^{-2\gamma^{2kd}\varphi\circ \Theta}+\int_{\mathbb{D}}\gamma^{2kd}\lambda_1\circ \Theta\,  |w\circ \Theta_k|^2 e^{-2\gamma^{2kd}\varphi\circ \Theta}\right\}\\
	&\simeq& \gamma^{2k(n-1)}\int_{B^{n-1}}\left\{\int_{\mathbb{D}} \left|\frac{\partial}{\partial \overline\zeta}(w\circ \Theta_k)\right|^2  e^{-2\gamma^{2kd}\varphi\circ \Theta}+\int_{\mathbb{D}}\Delta_\zeta(\gamma^{2kd}\varphi\circ \Theta)\,  |w\circ \Theta_k|^2 e^{-2\gamma^{2kd}\varphi\circ \Theta}\right\}, 
\end{eqnarray*}
where in the third line we used the homogeneity of $\varphi$ and $\lambda_1$, and in the last step we used \eqref{eq:preparation}. 

We are now in a position to apply the Main one-dimensional estimate (Proposition \ref{lem:main_1d_estimate}). Since the leading coefficient of the rescaled Weierstrass polynomial is $A=\gamma^{kd}$, we get \[
\int_{\mathbb{D}} \left|\frac{\partial}{\partial \overline\zeta}(w\circ \Theta_k)\right|^2  e^{-2\gamma^{2kd}\varphi\circ \Theta}+\int_{\mathbb{D}}\Delta_\zeta(\gamma^{2kd}\varphi\circ \Theta)\,  |w\circ \Theta_k|^2 e^{-2\gamma^{2kd}\varphi\circ \Theta}\gtrsim \gamma^{\frac{2kd}{m+1}}\int_{\mathbb{D}}  |w\circ \Theta_k|^2 e^{-2\gamma^{2kd}\varphi\circ \Theta}, 
\]
if $k\geq k_0$ is large enough. Plugging this into the lower bound on $\E^\varphi_{W_k}(w)$, we get 
\begin{eqnarray*}
\E^\varphi_{W_k}(w)\gtrsim \gamma^{2k\left(\frac{d}{m+1}-1\right)}\gamma^{2kn}\int_{B^{n-1}}\left\{\int_{\mathbb{D}}  |w\circ \Theta_k|^2 e^{-2\varphi\circ \Theta_k}\right\}\simeq \gamma^{2k\left(\frac{d}{m+1}-1\right)}\int_{W_k}|w|^2e^{-2\varphi}, 
\end{eqnarray*}
again for $k\geq k_0$. Since $|z|\simeq \gamma^k$ on $W_k$, summing over $k\geq k_0$, and recalling \eqref{eq:2mleqv} and \eqref{bounded_overlapping}, we finally obtain \eqref{eq:main_estimate}. This completes the proof of Theorem \ref{thm:main}. 

\section{Applications of Theorem \ref{thm:main}}\label{sec:examples} 

\subsection{Case $n=1$} The application of Theorem \ref{thm:main} to this case is particularly straightforward. Indeed, there is (up to scalar multiples) only one  $\varphi\in \mathcal{H}_{1,d}$ for every $d$, and $\widetilde{F}$ is the identity map of $\mathbb{P}^0$. We obtain the following corollary (already covered by the much more general result of Kohn and Greiner). 

\begin{cor}
The domain \[
	\Omega=\{(z_1,z_2)\in \C^2\colon\, \im(z_2)>|z_1|^{2d}\}
	\] 	satisfies a subelliptic estimate of order $\frac{1}{2d}$ at the origin, and no better one. 
\end{cor}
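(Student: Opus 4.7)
The plan is to simply specialize Theorem \ref{thm:main} to the case $n = 1$. First I would identify the defining data: the function $\varphi(z_1) = |z_1|^{2d}$ is a HSOnS with $F(z_1) = z_1^d$, is $\C$-homogeneous of degree $2d$, and vanishes exactly at the origin, so $\varphi \in \mathcal{H}_{1,d}$ and $\Omega$ is a homogeneous special domain in the sense of Definition \ref{def:homog_special}.

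Next I would verify hypothesis \eqref{eq:one_dim_kernel}. The Levi form of $\varphi$ is the $1{\times}1$ matrix
\[
H(z_1) = \frac{\partial^2 \varphi}{\partial \overline{z}_1 \partial z_1}(z_1) = d^2 |z_1|^{2d-2},
\]
which is strictly positive for every $z_1 \neq 0$. Hence $\ker H(p) = \{0\}$ at every $p \in \C \setminus\{0\}$, and the hypothesis $\dim \ker H(p) \leq 1$ is automatically satisfied.

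Then I would compute $\mathfrak{t}(\widetilde{F})$. Since $n = 1$, the target $\mathbb{P}^{n-1} = \mathbb{P}^0$ is a single point, so every holomorphic disc $\psi : \mathbb{D} \to \mathbb{P}^0$ is constant and has $\psi'(0) = 0$. There are no nonsingular discs to take the supremum over in Definition \ref{def:flatness}, so the (vacuous) quantity $\mathfrak{t}(\widetilde F)$ trivially satisfies $\mathfrak{t}(\widetilde F) \leq d$, consistently with remark (1) following Theorem \ref{thm:main}. Consequently $\max\{d, \mathfrak{t}(\widetilde F)\} = d$.

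Finally, applying Theorem \ref{thm:main} yields the sharp order of subellipticity
\[
s(\Omega; 0) = \frac{1}{2\max\{d, \mathfrak{t}(\widetilde F)\}} = \frac{1}{2d},
\]
with the assertion that no better estimate holds being part of the conclusion of that theorem. There is no substantive obstacle in this argument: the whole proof is just a verification that the one-dimensional case fits the hypotheses of the main theorem, and the corollary follows immediately.
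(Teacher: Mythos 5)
Your proof is correct and follows exactly the same route as the paper: the corollary is obtained by specializing Theorem \ref{thm:main} to $n=1$, noting that the kernel hypothesis is vacuous for a $1\times 1$ Levi form and that $\mathfrak{t}(\widetilde F)$ contributes nothing since there are no nonsingular discs into $\mathbb{P}^0$. Your observation that $\mathbb{P}^0$ is a single point (rather than the empty space, as the paper inadvertently says) is in fact the more accurate phrasing, but the conclusion $\max\{d,\mathfrak{t}(\widetilde F)\}=d$ is the same either way.
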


Notice that in this case the only nontrivial part of the crucial Lemma \ref{lem:main_estimate} is the local estimate, and all we need of the machinery developed in the paper is the basic $\dbar$-UP.

\subsection{Case $n=2$} 

In this case $\widetilde{F}$ is a holomorphic self-map of $\mathbb{P}^1$. It is obvious that $\dim\ker d\widetilde{F}(q)\leq 1$ for every $q\in \mathbb{P}^1$, and it is easy to see that $\mathfrak{t}(\widetilde{F})\leq d$. Indeed, this amounts to the fact that $\widetilde{F}$ is a branched cover of degree $d$, as may be seen representing the map in appropriate local parameters. Thus, Theorem \ref{thm:main} gives the following corollary. 

\begin{cor}
Let $F_1(z_1,z_2),F_2(z_1,z_2)$ be homogeneous polynomials of degree $d\geq 1$ without common zeros outside the origin. Then the domain \[
\Omega=\{(z_1,z_2,z_3)\in \C^3\colon\, \im(z_3)>|F_1(z_1,z_2)|^2+|F_2(z_1,z_2)|^2\}
\] 	satisfies a subelliptic estimate of order $\frac{1}{2d}$ at the origin, and no better one. 
	\end{cor}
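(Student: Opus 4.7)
The plan is to apply Theorem \ref{thm:main} directly to $\varphi=|F_1|^2+|F_2|^2$. There are three items to verify: that $\varphi \in \mathcal{H}_{2,d}$, that the Levi-kernel condition \eqref{eq:one_dim_kernel} holds, and that $\max\{d,\mathfrak{t}(\widetilde{F})\}=d$, so that the subelliptic order produced by the theorem is exactly $\tfrac{1}{2d}$.

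First, I would dispose of membership in $\mathcal{H}_{2,d}$: the function $\varphi$ is a Hermitian sum of $n=2$ squares by construction; homogeneity of $F_1,F_2$ of degree $d$ makes $\varphi$ $\C$-homogeneous of degree $2d$; and the assumption that $F_1,F_2$ have no common zeros away from the origin is precisely the statement that $\varphi$ vanishes only at $0$. Next, I would verify \eqref{eq:one_dim_kernel} using its equivalent form \eqref{eq:one_dim_ker_bis}, which asks that $\dim \ker d\widetilde{F}(q)\leq 1$ for every $q\in\mathbb{P}^{1}$. This is automatic here, because $T_q\mathbb{P}^1$ is one-complex-dimensional.

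The only substantive step is the bound $\mathfrak{t}(\widetilde{F})\leq d$. The regular self-map $\widetilde{F}:\mathbb{P}^1\to\mathbb{P}^1$ has algebraic degree $d$ (the polynomial degree of the components, since the no-common-zero hypothesis rules out cancellations). Around any $q\in\mathbb{P}^1$, choosing local affine charts at $q$ and at $\widetilde{F}(q)$, the map has the local form $\zeta\mapsto \zeta^k u(\zeta)$ with $u(0)\neq 0$, where $k\geq 1$ is the ramification index at $q$. Summing ramification indices over a fiber yields $d$, so $k\leq d$ pointwise. For any nonsingular analytic disc $\psi:\mathbb{D}\to\mathbb{P}^1$ with $\psi(0)=q$, the composition $\widetilde{F}\circ\psi$ reads in local coordinates as $\psi(\zeta)^k u(\psi(\zeta))$ minus its value at $0$, and since $\psi'(0)\neq 0$ the order at $\zeta=0$ equals $k$. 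Hence $\nu_0(\widetilde{F}\circ\psi)=k\leq d$, and taking the sup over $\psi$ gives $\mathfrak{t}(\widetilde{F})\leq d$.

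Combining these three observations, Theorem \ref{thm:main} delivers $s(\Omega;0)=(2\max\{d,\mathfrak{t}(\widetilde{F})\})^{-1}=\tfrac{1}{2d}$, with the upper bound being sharp. There is no real obstacle: all the analytic content is already packaged into Theorem \ref{thm:main}, and in dimension $n=2$ its hypotheses reduce to the trivial facts that $\mathbb{P}^1$ is one-dimensional and that holomorphic self-maps of $\mathbb{P}^1$ are branched coverings of their algebraic degree.
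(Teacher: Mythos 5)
Your proposal is correct and follows exactly the paper's route: verify membership in $\mathcal{H}_{2,d}$, note that the Levi--kernel condition is automatic since $\dim_\C T_q\mathbb{P}^1=1$, and establish $\mathfrak{t}(\widetilde F)\le d$ by viewing $\widetilde F:\mathbb{P}^1\to\mathbb{P}^1$ as a degree-$d$ branched cover so that the local form $\zeta\mapsto\zeta^k u(\zeta)$ with $k\le d$ forces $\nu_0(\widetilde F\circ\psi)\le d$ for every nonsingular disc. You have simply spelled out the "local parameters" argument that the paper leaves implicit in the sentence preceding the corollary.
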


\subsection{Cases $n\in \{3,4\}$} Things get more subtle and interesting when $n=3$ or $n=4$. Let us first show that examples exist in abundance. 

\begin{prp}\label{prp:generic_new}
	Let $d\geq 1$. Then the generic holomorphic self-map of  $\mathbb{P}^2$ of degree $d$ satisfies \eqref{eq:one_dim_ker_bis}. The same is true of the generic holomorphic self-map of $\mathbb{P}^3$ of degree $d$. 
\end{prp}

\begin{proof}
Fix $n,d\geq 1$. Let $N:={n-1+d \choose n-1}$ be the number of monomials of degree $d$ in $n$ variables and let $v:\mathbb{P}^{n-1}\rightarrow \mathbb{P}^{N-1}$ be the Veronese embedding \[
[(z_1,\cdots, z_n)]\mapsto [(z_1^{\alpha_1}\cdots z_n^{\alpha_n})_{\alpha_1+\cdots+\alpha_n=d}].\] Denote by $V$ the preimage of $v(\mathbb{P}^{n-1})$ under the canonical projection $\C^n\setminus \{0\}\rightarrow \mathbb{P}^{n-1}$. 

Every $n\times N$ matrix $A$ induces a projection $\widetilde{A}:\mathbb{P}^{N-1}\rightarrow \mathbb{P}^{n-1}$, which is regular on the complement of a projective subspace, which is of codimension $n$ if $A$ has full-rank. Since $v(\mathbb{P}^{n-1})$ has dimension $n-1$, the generic $A$ of full-rank is such that $\widetilde{A}$ is regular on $v(\mathbb{P}^{n-1})$, and hence induces a self-map $\widetilde{F}=\widetilde{A}\circ v$ of $\mathbb{P}^{n-1}$. Of course, every self-map of $\mathbb{P}^{n-1}$ of degree $d$ arises in this way. Let $\mathcal{A}\subseteq \C^{n\times N}$ be the Zariski open set of matrices $A$ of full-rank such that $\widetilde{A}$ is regular on $V$, i.e., \[
\mathcal{A}=\{A\in \C^{n\times N}\colon\, A \textrm{ has full rank and }\ker A\cap V=\emptyset\}.
\] Let \[
\ker :\mathcal{A}\rightarrow \mathrm{Gr}_{N-n}(\C^N)
\] be the morphism $A\mapsto \ker A$, where $\mathrm{Gr}_k(U)$ is the Grassmannian of $k$-planes in the complex vector space $U$. 

It is easily seen that the self-map of $\mathcal{P}^{n-1}$ induced by $A\in \mathcal{A}$ admits a point where the null-space of the Jacobian has dimension $\geq 2$ if and only if there exists $p\in V\subseteq \C^n$ and $L\in \mathrm{Gr}_2(T_p V)$ such that $\ker A\supseteq L$. Let then \[
\mathcal{B}=\{M\in \mathrm{Gr}_{N-n}(\C^N)\colon\, \exists p\in V, \, L\in \mathrm{Gr}_2(T_pV)\, \textrm{s.t.}\, M\supseteq L\}.
\]
The thesis will be established if we show that for $n\in \{3,4\}$ the complement of $\mathcal{B}$ has nontrivial Zariski interior, because the same will then be true of the complement of $\ker^{-1}(\mathcal{B})$. 

Consider the variety \[
Z=\{(p,L,M)\colon\, p\in V,\, L\in \mathrm{Gr}_2(T_pV), \, M\in \mathrm{Gr}_{N-n}(\C^N)\, \textrm{s.t.}\, M\supseteq L\}
\] and denote by $Z_{12}$ its projection onto the first two factors and by $Z_{23}$ its projection onto the second and third factors. Notice that since $V$ is invariant under scalings $v\mapsto \lambda v$ for every $\lambda\in \C^\times$, the projection onto the second factor $p_2: Z_{12}\rightarrow \mathrm{Gr}_2(\C^N)$ has the property that \[
\dim p_2(Z_{12})\leq \dim Z_{12}-1 = n+2(n-2)-1=3n-5, 
\]
where we used the fact that the $V$ has dimension $n$ and the formula $\dim \mathrm{Gr}_k(U)=k(\dim U-k)$. It is now easy to see that \begin{eqnarray*}
\dim Z_{23}&=&\dim p_2(Z_{12})+\dim \mathrm{Gr}_{N-n-2}(\C^{N-2})\\
&\leq  &3n-5+(N-n-2)n=(N-n)n+n-5. 
\end{eqnarray*}
Here we used the fact that $M\in \mathrm{Gr}_{N-n}(\C^N)$ is such that $M\supseteq L$ if and only if $M/L\in \mathrm{Gr}_{N-n-2}(\C^N/L)$ when $L$ is a $2$-plane. 

Since the projection onto the third factor of $Z$ is exactly $\mathcal{B}$, it follows that $\dim \mathcal{B}\leq \dim Z_{23}\leq (N-n)n+n-5$. If $n<5$, then this number is smaller than the dimension of $\mathrm{Gr}_{N-n}(\C^N)$, and the thesis follows. 

	\end{proof}

\begin{rmk}
As was pointed out to us by J.~Silverman on MathOverFlow, in the two (projective) dimensional case the critical locus of a self-map of projective space is generically smooth (see \cite[Theorem 15 (a)]{silverman}). This is a stronger statement than our Proposition \ref{prp:generic_new}. In fact, 
any point $q\in \mathbb{P}^2$ such that $\dim \ker d\widetilde{F}(q)\geq 2$ is a singular point of $Z=\{\dim \ker d\widetilde{F}\geq 0\}$. This follows from Jacobi's formula for the differential of the determinant \[
	d \det (A)  =  \mathrm{tr}(\mathrm{adj}(A)dA)\qquad (A\in \C^{n\times n}), 
	\]
	where $\mathrm{adj}(A)$ is the transpose of the cofactor matrix, which is the zero matrix if $\dim \ker A\geq 2$, and $dA$ is the matrix of differentials of the components of $A$. 

\end{rmk}

Next, we notice that the invariant $\mathfrak{t}(\widetilde{F})$ of a holomorphic self-map of $\mathbb{P}^n$ need not be $\leq d$. The simplest examples are in dimension $n=2$ and degree $d=2$. E.g., consider the map 
\[
F(z)=(z_1^2+z_2z_3, z_2^2+z_1z_3, z_3^2), 
\]
whose Jacobian matrix is 
\[
\det J(z) = \det \begin{pmatrix}
	2z_1& z_3& z_2\\
	z_3& 2z_2& z_1\\
	0& 0& 2z_3
\end{pmatrix} = 2z_3(4z_1z_2-z_3^2)
\]
So the critical locus $Z$ of $\widetilde{F}$ is the union of a line and a conic. It is an easy computation to check that the rank of $J(z)$ is $2$ at every point of $Z$.
Hence, our mapping satisfies the hypothesis \eqref{eq:one_dim_kernel} of Theorem \ref{thm:main}. 

By elementary jet computations, one may show the following facts: \begin{enumerate}
\item If $p=[(z_1,z_2,z_3)]$ belongs to the ramification locus of $\widetilde{F}$ and it is different from
\[[(1,0,0)],\ [(0,1,0)],\ [(1,1,2)],\ [(1,\xi,2\xi^2)],\ [(1,\xi^2,2\xi)]\]
with $\xi$ a primitive $3$rd rooth of unit, then for every nonsingular analytic discs $\widetilde{\psi}$ with $\widetilde{\psi}(0)=p$ we have $\nu_0(\tilde{F}\circ \widetilde{\psi})\leq2$ and equality is attained.

\item If $p$ is $[(1,1,2)]$, $[(1,\xi,2\xi^2)]$, or $[(1,\xi^2,2\xi)]$ (which lie on the conic, but not on the line), then for every nonsingular analytic disc $\widetilde{\psi}$ with $\widetilde{\psi}(0)=p$ we have $\nu_0(\widetilde{F}\circ\widetilde{\psi})\leq3$ and equality is attained. For example, consider $\psi(\zeta)=(1+\xi\zeta,\xi^2-\zeta-\xi\zeta^2/2,2\xi)$ and set $\widetilde{\psi}=\pi\circ\psi$.

\item If $p$ is $[(1,0,0)]$ or $[(0,1,0)]$ (which are the intersection points of the line and the conic), then for every nonsingular analytic disc $\widetilde{\psi}$ for which $\widetilde{\psi}(0)=p$, we have $\nu_0(\widetilde{F}\circ\widetilde{\psi})\leq4$ and equality is attained. For example, consider $\psi(\zeta)=(1+\zeta^3/2,\zeta,-\zeta^2)$, then
\[
F\circ\psi(\zeta)=(1+\zeta^4/6,-\zeta^5/2,\zeta^4)
\]
that is $F$ has order $4$ along a nonsingular curve through the point $(1,0,0)$, hence setting $\widetilde{\psi}=\pi\circ\psi$, we obtain the claimed equality.
\end{enumerate}

Therefore, we have $\mathfrak{t}(\widetilde{F})=4$ and Theorem \ref{thm:main} allows us to conclude that
$$s(\Omega;0)=\frac{1}{8}$$
where
$$\Omega=\{(z,z_{n+1})\in\C^{n+1}\ \colon\ \im z_{n+1}>|z_1^2+z_2z_3|^2+|z_2^2+z_1z_3|^2+|z_3^2|^2\}\;.$$

The discussion of this example shows how the sharp order of subellipticity may be actually computed for the (germs of) domains covered by Theorem \ref{thm:main}. 

\subsection{Case $n\geq 5$} Let's prove that Theorem \ref{thm:main} is vacuous in dimension $n\geq 5$. 

\begin{prp}
	Let $m\geq 4$. Every regular morphism $\widetilde{F}:\mathbb{P}^m\rightarrow \mathbb{P}^m$ of degree $d\geq 2$ admits a point $q$ where $\dim \ker d\widetilde{F}(q)\geq 2$. 
\end{prp}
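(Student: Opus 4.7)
Write $N := n-1 \geq 4$ and regard the differential of $\widetilde{F}$ as a morphism of rank-$N$ holomorphic vector bundles on $\mathbb{P}^{N}$,
\[
\varphi := d\widetilde{F}\colon T\mathbb{P}^{N} \longrightarrow \widetilde{F}^{\,*}T\mathbb{P}^{N}.
\]
The proposition is equivalent to the assertion that the degeneracy locus
\[
D_{N-2}(\varphi) := \{q \in \mathbb{P}^{N} \colon \mathrm{rank}\,\varphi_{q} \leq N-2\}
\]
is nonempty. The plan is to apply the Thom--Porteous formula (cf.~Fulton, \emph{Intersection Theory}, Ch.~14), which represents the class of $D_{N-2}(\varphi)$ by a cycle supported on $D_{N-2}(\varphi)$ itself and, in our equal-rank setting, gives the $2\times 2$ determinantal expression
\[
[D_{N-2}(\varphi)] = c_{2}^{2} - c_{1}c_{3} \ \in\ H^{8}(\mathbb{P}^{N};\mathbb{Q}),
\]
where $c_{i} := c_{i}(\widetilde{F}^{\,*}T\mathbb{P}^{N} - T\mathbb{P}^{N})$. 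Nonvanishing of this class in cohomology will then force $D_{N-2}(\varphi) \neq \emptyset$, and any point therein supplies the required $q$.

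\textbf{Chern-class computation.} The Euler sequence on $\mathbb{P}^{N}$, together with $\widetilde{F}^{*}\mathcal{O}(1) = \mathcal{O}(d)$, yields $c(T\mathbb{P}^{N}) = (1+h)^{N+1}$ and $c(\widetilde{F}^{\,*}T\mathbb{P}^{N}) = (1+dh)^{N+1}$, where $h$ is the hyperplane class. Hence
\[
c(\widetilde{F}^{\,*}T\mathbb{P}^{N} - T\mathbb{P}^{N}) \equiv \frac{(1+dh)^{N+1}}{(1+h)^{N+1}} \pmod{h^{N+1}}.
\]
Newton's identities applied to the virtual power sums $p_{k} = (N+1)(d^{k}-1)h^{k}$ produce the closed forms
\[
c_{1} = (N+1)(d-1)h, \qquad c_{2} = \frac{(N+1)(d-1)}{2}\bigl((N+1)(d-1) - (d+1)\bigr)h^{2},
\]
and a similar explicit polynomial expression for $c_{3}$. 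The main calculation is then the expansion of the Porteous determinant, and I expect the cancellations to collapse to the clean closed form
\[
c_{2}^{2} - c_{1}c_{3} = \frac{N(N+1)^{2}(N+2)(d-1)^{4}}{12}\, h^{4}.
\]

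\textbf{Conclusion and main obstacle.} Since $N \geq 4$, the class $h^{4}$ is nonzero in $H^{8}(\mathbb{P}^{N})$, and for $d \geq 2$ the rational coefficient above is strictly positive; hence $[D_{N-2}(\varphi)] \neq 0$, the locus $D_{N-2}(\varphi)$ is nonempty, and the proposition follows. The only nontrivial step is the algebraic simplification leading to the final display: the miraculous collapse of $c_{2}^{2} - c_{1}c_{3}$ to a pure multiple of $(d-1)^{4}$, rather than to something of higher total degree in $d$, is the real technical core of the argument, although it boils down to essentially elementary symmetric-function manipulation. It is pleasing to note that the hypothesis $N \geq 4$ enters the argument exactly because the Porteous class has pure $h$-degree $4$: for $N \leq 3$ one has $h^{4} = 0$ in $H^{\bullet}(\mathbb{P}^{N})$ automatically, consistently with the examples treated earlier in Section~\ref{sec:examples}.
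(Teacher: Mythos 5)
Your argument is correct, and it takes a genuinely different route from the paper's. The paper proceeds by a genericity-plus-closure strategy: it shows the locus $\mathbb{P}(Y_2)\subset\mathbb{P}^{n^2-1}$ of projectivized matrices of rank $\leq n-2$ has codimension $4$ (via the Grassmannian desingularization), observes that the projectivized Jacobian $\widetilde{J}_F:\mathbb{P}^{n-1}\to\mathbb{P}^{n^2-1}$ of a generic $F$ has image of dimension $n-1\geq 4$ and so must meet $\mathbb{P}(Y_2)$, and then upgrades "generic $F$" to "every $F$" by a Zariski-closedness and irreducibility argument on the parameter space. You instead compute a topological obstruction directly for an arbitrary $F$: treating $d\widetilde{F}$ as a morphism $T\mathbb{P}^N\to\widetilde{F}^*T\mathbb{P}^N$ of rank-$N$ bundles and invoking the localized Thom--Porteous formula (Fulton, Thm.\ 14.4), you get that emptiness of $D_{N-2}(d\widetilde{F})$ would force the Porteous class $c_2^2-c_1c_3$ to vanish. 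I have verified your "expected" simplification: with $p_k=(N+1)(d^k-1)h^k$, Newton's identities give
\[
c_2^2-c_1c_3=\frac{1}{12}(N+1)^4q_1^4h^4+\frac{1}{4}(N+1)^2q_2^2h^4-\frac{1}{3}(N+1)^2q_1q_3h^4
=\frac{N(N+1)^2(N+2)(d-1)^4}{12}\,h^4,
\]
where $q_k=d^k-1$, using $\frac{(d+1)^2}{4}-\frac{d^2+d+1}{3}=-\frac{(d-1)^2}{12}$. This is nonzero for $N\geq 4$, $d\geq 2$, which closes the argument. The trade-off: your approach avoids the deformation step entirely and, as a bonus, yields the enumerative class of the degeneracy locus; the paper's is more elementary, relying only on dimension counting in $\mathbb{P}^{n^2-1}$ and properness of the projection rather than on intersection-theoretic machinery. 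Both hinge on the same codimension-$4$ phenomenon, surfacing in the paper as $\operatorname{codim}\mathbb{P}(Y_2)=4$ and in your proof as the pure $h^4$-degree of the Porteous determinant.
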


\begin{proof} 
	Let $Y_2$ be the subvariety of the space of square matrices $\C^{n \times n}$ given by the condition $\mathrm{rk} M\leq n-2$. We claim that its codimension is $4$. 
	
	To see this, consider the complex manifold
	\[Z_2=\{(M,V)\in \C^{n \times n}\times\mathrm{Gr}_{n-2}(\C^n)\ :\ Mv\in V\quad \forall v\in \C^n \}.\] Notice that $Z_2$ is a vector bundle on $\mathrm{Gr}_{n-2}(\C^n)$ with fibers of complex dimension $n(n-2)$. Since the dimension of $\mathrm{Gr}_{n-2}(\C^n)$ is $2(n-2)$, we have $\dim_\C Z_2=n^2-4$. Since $Z_2$ is a desingularization of $Y_2$, it must have the same dimension $n^2-4$. This proves the claim. Clearly, the projectivization $\mathbb{P}(Y_2)$ in $\mathbb{P}^{n^2-1}$ has also codimension $4$.  
	
	Given $F:\C^n\rightarrow \C^n$ homogeneous of degree $d\geq 2$, its Jacobian $J_F = \left(\frac{\partial F_j}{\partial z_k}\right)_{j,k}:\C^n\rightarrow \C^{n\times n}$ induces a map $\widetilde{J}_F:\mathbb{P}^{n-1}\rightarrow \mathbb{P}^{n^2-1}$. For a generic choice of $F$, the image of $\widetilde{J}_F$ is a subvariety of dimension $n-1$. To see this, it is enough to exhibit an example in every degree $d\geq 2$, e.g., $F=(z_1^d,\ldots, z_n^d)$. Therefore, if $n-1\geq 4$, $\widetilde{J}_F(\mathbb{P}^{n-1})$ must intersect $\mathbb{P}(Y_2)$. This proves that the statement is true for a generic choice of $F$. 
	
	To conclude, let $\mathbb{P}^{N(d,n)}$ the projectivized of the space of $n$-tuples of homogeneous polynomials of degree $d$ in $n$ variables. The set \[
	W = \{([p], [F])\in \mathbb{P}^{n-1}\times \mathbb{P}^{N(d,n)}\colon\, \mathrm{rk}(J_F(p))\leq n-2\}
	\]
	is Zariski-closed. Thus, the same is true of its projection onto the second factor. Since we proved above that this projection contains a Zariski-open subset, the thesis follows. \end{proof}

\section{Closing remarks}\label{sec:closing}

A few additional comments may help to clarify the scope of our method. First of all, we only concern ourselves with rigid domains, since translation invariance in $\re(z_{n+1})$ allows one to exploit Fourier analysis, which shows that \eqref{eq:subelliptic} is equivalent to a certain one-parameter family of estimates. Here the parameter is of course the frequency variable "dual" to $\re(z_{n+1})$. After some rather standard manipulations, this family of estimates produces a sufficient condition for subellipticity involving the quadratic forms $\mathbf{E}^{\xi\varphi}$, that is, Proposition \ref{prp:spectral_gap}. 

Secondly, while a $\bar\partial$-uncertainty principle might eventually turn out to be useful for more general rigid domains, it seems to be already highly non-trivial to put together the local information it provides and to obtain the desired spectral gap estimates in the narrower class of special domains \eqref{eq:special_domain}, whose pseudoconvex geometry reduces to the algebraic geometry of the polynomial mapping $F$. Proving good lower bounds for the sharp order of subellipticity for special domains (e.g., proving D'Angelo's conjecture or something of comparable strength) is already a very desirable (and apparently difficult) goal, and so it seems reasonable to think of the category of special domains as a wide enough arena for our investigations. 

The natural question is then: what is still to be done in order to prove "good" lower bounds for $s(\Omega, 0)$ when $\Omega$ is a general special domain (of D'Angelo finite type), thus completing our program of proving subellipticity via appropriate $\bar\partial$-uncertainty principles? We provide below a couple of remarks in this regard. 

The  "local hard analysis" in this paper happens in dimension one (and in Section \ref{sec:dbar_up}). The passage from the local one-dimensional analysis to the global $n$-dimensional one is accomplished in two steps: an argument with the approximate minimal eigenvector fields of Section \ref{sec:AME} yields a "single-scale" $n$-dimensional estimate, and a scaling argument then allows to deduce the final global result from the single-scale estimate. We are able to make the first step work under the assumption that \emph{the null space of the Jacobian of $F$ is one-dimensional at critical points outside the origin}; the most expedient way to carry out the second step is instead to assume that $F$ is \emph{homogeneous} (that is, its components are homogeneous polynomials of the same degree). We can check that the two assumptions co-exist (generically) only in dimension $n\leq 4$ (hence, for domains in dimension $n+1\leq 5$). The resulting class of low-dimensional rigid domains is the one that we use as test-bed for the whole approach in Section \ref{sec:ker_one_dim}. Of the two assumptions that we make in Section \ref{sec:ker_one_dim} (one-dimensionality of the null-space of the Jacobian and homogeneity), the first one is the more serious one. We believe in fact that the second may be relaxed (if not dispensed with), while dealing with higher dimensional null-spaces should require new ideas.

\appendix

\section{}\label{sec:appendix_A}

A harmonic function on a disc can be represented as the real part of a holomorphic function; Proposition \ref{lem:well_known} below states that a function on a disc whose Laplacian is bounded can be represented as the real part of a holomorphic function plus a controlled error. The proof of Proposition \ref{lem:well_known} uses another quite elementary fact. 

\begin{prp}[Gradient estimate for holomorphic functions]\label{prop:well_known}\[
	||F'||_{L^\infty(D(z,0.r))}\lesssim r^{-2}||\re(F)||_{L^2(D(z,r))}\qquad \forall  F\in \mathcal{O}(D(z,r)).
	\]
	
\end{prp}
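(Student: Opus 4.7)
The plan is to proceed by scaling and then by chaining two classical interior estimates. By translation invariance we may assume $z=0$, and by the rescaling $F_r(\zeta):=F(r\zeta)$ (which satisfies $F_r'(\zeta)=rF'(r\zeta)$ and $\lVert\re F_r\rVert_{L^2(\mathbb{D})}^2 = r^{-2}\lVert\re F\rVert_{L^2(D(0,r))}^2$) the claim reduces to the unit-disc estimate
\[
\lVert F'\rVert_{L^\infty(D(0,1/2))} \lesssim \lVert \re(F)\rVert_{L^2(\mathbb{D})}, \qquad F\in\mathcal{O}(\mathbb{D}).
\]
The powers of $r$ then match the claimed $r^{-2}$.

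The key observation is that, setting $u:=\re(F)$, the Cauchy--Riemann equation $\partial F/\partial\overline z=0$ gives $F'=2\,\partial u/\partial z$, and since $u$ is real valued one has the pointwise identity $|F'|=|\nabla u|$. Hence the whole problem is reduced to the classical interior bound $\lVert\nabla u\rVert_{L^\infty(D(0,1/2))}\lesssim \lVert u\rVert_{L^2(\mathbb{D})}$ for harmonic $u$.

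I would then carry out two standard sub-steps. First, since $F'$ is holomorphic, $|F'|^2$ is subharmonic and the sub-mean-value inequality yields, for every $w\in D(0,1/2)$,
\[
|F'(w)|^2 \;\lesssim\; \int_{D(w,1/4)}|F'|^2 \;\leq\; \int_{D(0,3/4)}|\nabla u|^2.
\]
Second, to control the right-hand side by $\lVert u\rVert_{L^2(\mathbb{D})}^2$, I would invoke the standard Caccioppoli inequality for harmonic functions: pick a cut-off $\chi\in C^\infty_c(\mathbb{D})$ with $\chi\equiv 1$ on $D(0,3/4)$ and integrate by parts using $\Delta u=0$, obtaining
\[
\int_{\mathbb{D}}\chi^2|\nabla u|^2 \;=\; -2\int_{\mathbb{D}} u\,\chi\,\nabla\chi\cdot\nabla u,
\]
and then Cauchy--Schwarz absorbs the $\nabla u$ on the right. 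This yields $\int_{D(0,3/4)}|\nabla u|^2\lesssim \lVert u\rVert_{L^2(\mathbb{D})}^2$, and combining the two sub-steps finishes the unit-disc estimate.

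No genuine obstacle is expected: the argument is a composition of classical facts (Cauchy--Riemann, subharmonicity of $|F'|^2$, Caccioppoli for harmonic functions). The only point requiring minor bookkeeping is the scaling, but the exponents line up exactly to reproduce the $r^{-2}$ factor in the statement.
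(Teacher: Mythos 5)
Your proof is correct, but it takes a genuinely different route from the paper's. The paper works through the Bergman projection $B$ of the unit disc: by the mean value property, $\overline{F}-\overline{F(0)}$ is orthogonal to the Bergman space, giving $F+\overline{F(0)}=2B(\re F)$; differentiating and applying interior ellipticity then yields $\lVert F'\rVert_{L^\infty(0.5\mathbb{D})}\lesssim\lVert B(\re F)\rVert_{L^2}\leq\lVert\re F\rVert_{L^2}$, followed by a monotone-convergence limiting argument to dispose of the a priori $L^2$ assumption on $F$. You instead observe via Cauchy--Riemann that $|F'|=|\nabla u|$ for $u=\re F$, control $|F'(w)|^2$ by its average over $D(w,1/4)$ using subharmonicity of $|F'|^2$, and then bound the local Dirichlet energy $\int_{D(0,3/4)}|\nabla u|^2$ by $\lVert u\rVert_{L^2(\mathbb{D})}^2$ via Caccioppoli. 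Your route is the more elementary one: it is entirely real-variable (no Bergman kernel needed) and, since the cutoff keeps the integration by parts strictly interior, it sidesteps the limiting argument the paper needs for holomorphic $F$ that are not a priori in $L^2(\mathbb{D})$. The paper's route is less self-contained but is aligned with the $\dbar$- and Bergman-theoretic machinery that pervades the rest of the article (compare Lemma \ref{lem:dbar_unc}). Both establish the same estimate with matching scaling.
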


\begin{proof}
	Let $B$ be the Bergman projector on the unit disc $\mathbb{D}$. Let $F(\zeta)$ be a square integrable holomorphic function on $\mathbb{D}$. By the mean value property, $\overline{F(\zeta)}-\overline{F(0)}$ is orthogonal to the Bergman space and hence $B(\overline{F(\zeta)}-\overline{F(0)})=0$. This identity may be rewritten as \[
	F(\zeta)+\overline{F(0)}=2B\left(\re F(\zeta)\right). 
	\]
	Differentiating, we get \[
	F'(\zeta)=2\partial_\zeta B\left(\re F(\zeta)\right). 
	\]
	Ellipticity (or the Cauchy integral) yields \[
	||F'||_{L^\infty(0.5\mathbb{D})}\lesssim ||B\left(\re F\right)||_{L^2(\mathbb{D})}\leq ||\re F||_{L^2(\mathbb{D})}. 
	\]
	If $F$ is an arbitrary holomorphic function on $\mathbb{D}$, applying this identity to $F(t\zeta)$ ($t<1$), we get $t||F'||_{L^\infty(0.5t\mathbb{D})}\lesssim t^{-1}||\re F||_{L^2(t\mathbb{D})}$. Letting $t$ tend to $1$, the monotone convergence theorem allows to deduce the inequality of the statement for every holomorphic $F$, when $D(z,r)=\mathbb{D}$. Traslation invariance and scaling gives the full thesis. 
\end{proof}

\begin{prp}[Bounded curvature implies almost harmonicity]\label{lem:well_known}
	Let $\varphi\in C^2(D(z,r); \R)$ be such that $|\Delta \varphi(\zeta)|\leq Cr^{-2}$ for all $\zeta\in D(z,r)$. Then there exists a holomorphic function $G:D(z,r)\rightarrow \C$ such that \[
	\varphi = \re(G)+O(C), \quad r||G'||_{L^\infty(D(z,0.5r))}\lesssim ||\varphi||_{L^\infty(D(z,r))}+C
	\]
\end{prp}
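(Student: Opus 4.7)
The idea is to split $\varphi$ as a harmonic part plus a bounded error, using the Poisson solver on the disc, and then invoke simple connectedness to realize the harmonic part as $\re G$.

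After translation and a rescaling $\zeta\mapsto z+r\zeta$ (which turns the hypothesis $|\Delta\varphi|\le Cr^{-2}$ into $|\Delta\widetilde\varphi|\le C$ on the unit disc and preserves both conclusions up to an explicit factor of $r$), I may assume $D(z,r)=\mathbb{D}$. The first step is to solve the Dirichlet problem
\[
\Delta u=\Delta\varphi\ \text{on}\ \mathbb{D},\qquad u|_{\partial\mathbb{D}}=0,
\]
by means of the Green's function $K(\zeta,w)=\frac{1}{2\pi}\log\bigl|\frac{\zeta-w}{1-\zeta\overline{w}}\bigr|$, giving $u(\zeta)=\int_{\mathbb{D}}K(\zeta,w)\Delta\varphi(w)\,dA(w)$. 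Since $\sup_{\zeta\in\mathbb{D}}\int_{\mathbb{D}}|K(\zeta,w)|\,dA(w)$ is a finite absolute constant, we get $\|u\|_{L^\infty(\mathbb{D})}\lesssim \|\Delta\varphi\|_{L^\infty(\mathbb{D})}\lesssim C$.

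Setting $h:=\varphi-u$, the function $h$ is harmonic on $\mathbb{D}$. Since $\mathbb{D}$ is simply connected, there exists $G\in\mathcal{O}(\mathbb{D})$ with $\re G=h$; fixing the imaginary part by e.g.\ requiring $\im G(0)=0$ makes $G$ unique. By construction $\varphi-\re G=u$, so $\|\varphi-\re G\|_{L^\infty(\mathbb{D})}\lesssim C$, which is the first assertion.

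For the second assertion, apply Proposition \ref{prop:well_known} with $r=1$ to $G$:
\[
\|G'\|_{L^\infty(0.5\mathbb{D})}\lesssim \|\re G\|_{L^2(\mathbb{D})}=\|h\|_{L^2(\mathbb{D})}\lesssim \|h\|_{L^\infty(\mathbb{D})}\le \|\varphi\|_{L^\infty(\mathbb{D})}+\|u\|_{L^\infty(\mathbb{D})}\lesssim \|\varphi\|_{L^\infty(\mathbb{D})}+C.
\]
Undoing the rescaling introduces the factor $r$ on the left and leaves the right-hand side unchanged, yielding the claimed bound $r\|G'\|_{L^\infty(D(z,0.5r))}\lesssim \|\varphi\|_{L^\infty(D(z,r))}+C$.

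There is no real obstacle here: the only nontrivial ingredient is the $L^\infty$ bound for the Green's potential $u$, which is a completely standard consequence of the explicit form of $K$ on the disc, and the passage from $h$ harmonic to $h=\re G$ which uses simple connectedness of $\mathbb{D}$. The gradient bound is then immediate from the already-established Proposition \ref{prop:well_known}.
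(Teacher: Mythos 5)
Your proof is correct and follows essentially the same route as the paper: construct a potential-theoretic correction $u$ with $\Delta u=\Delta\varphi$ and $\|u\|_{L^\infty}\lesssim C$, so that $h=\varphi-u$ is harmonic and hence $h=\re G$ by simple connectedness, then invoke Proposition~\ref{prop:well_known} for the gradient bound. The only difference is cosmetic --- you use the Dirichlet Green's function $K(\zeta,w)$ of the disc where the paper convolves the zero-extended Laplacian with the free-space kernel $\Gamma(\zeta)=\tfrac{1}{2\pi}\log|\zeta|$ --- and both kernels have the same uniformly integrable logarithmic singularity, so the $L^\infty$ bound on $u$ is obtained identically.
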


\begin{proof}
	It is enough to prove the case $z=0$ and $r=1$, the general case follows by translation invariance and scaling. Define \[
	b(\zeta):=\begin{cases} \Delta\varphi(\zeta)\quad &\zeta\in \mathbb{D}\\
		0& \zeta\notin \mathbb{D}
	\end{cases}
	\] Let $b*\Gamma$ be its Newtonian potential, where $\Gamma(\zeta)=\frac{1}{2\pi}\log|\zeta|$. Then $\varphi-b*\Gamma$ is harmonic on $\mathbb{D}$, and hence there exists $G\in \mathcal{O}(\mathbb{D})$ such that $\varphi=\re(G)+b*\Gamma$. Since $\Gamma$ is uniformly integrable on discs of radius one, we have $|b*\Gamma|\lesssim C$. Hence, Proposition \ref{prop:well_known} gives \[
	||G'||_{L^\infty(0.5\mathbb{D})}\lesssim ||\re(G)||_{L^\infty(\mathbb{D})}\lesssim ||\varphi||_{L^\infty(\mathbb{D})}+C. 
	\]\end{proof}

\bibliography{subelliptic}
\bibliographystyle{alpha}

\end{document}